\documentclass{amsart}

\usepackage{color,graphicx,amssymb,latexsym,amsfonts,txfonts,amsmath,amsthm}
\usepackage{pdfsync,enumitem}
\usepackage{amsmath,amscd}
\usepackage[all,cmtip]{xy}

\usepackage{hyperref}
\hypersetup{
    colorlinks=true,       % false: boxed links; true: colored links
    linkcolor=blue,          % color of internal links
    citecolor=blue,        % color of links to bibliography
    filecolor=blue,      % color of file links
    urlcolor=blue           % color of external links
}

\newtheorem{theo}{Theorem}
\newtheorem{prop}{Proposition}
\newtheorem{coro}{Corollary}
\newtheorem{lemma}{Lemma}

\theoremstyle{remark}
\newtheorem{rema}{\bf Remark}

\numberwithin{table}{subsection}

\author{Rub\'en A. Hidalgo and Jhony S. Leyca}
\title{Purely non-free actions on Riemann surfaces}
\thanks{Partially supported by Project ANID-FONDECYT 1230001}

\subjclass[2010]{30F10, 14H37, 30F35, 14H30 }
\keywords{Riemann surfaces, algebraic curves, automorphisms, group actions}

\address{
Departamento de Matem\'atica y Estad\'{\i}stica, Universidad de La Frontera, Casilla 54-D, Temuco,
Chile}
\email{ruben.hidalgo@ufrontera.cl, j.leica01@ufromail.cl}

\begin{document}

%%%%%%%%%%%%%%%%
\begin{abstract}
A group of conformal automorphisms of a Riemann surface is said to act purely non-free, or to produce a gpnf-action,
if each of its finite-order elements has fixed points. 

It is known that every finite group admits a gpnf-action on a suitable closed Riemann surface of genus at least two.
If $G$ is an infinite countable group, and $X$ is an infinite-type orientable surface, without planar ends and whose space of ends is self-similar, then we observe that there is a Riemann surface structure on $X$ admitting a gpnf-action by $G$.

We obtain upper bounds for those finite groups producing a gpnf-action on a closed Riemann surface of genus $g \geq 2$.

If $G$ is a finite group, then its purely non-free genus $\sigma^{pnf}_{hyp}(G)$ is the minimum genus of at least two of a closed Riemann surface admitting a gpnf-action by $G$. This parameter is already known for cyclic, abelian non-cyclic groups, ${\mathcal A}_{4}$, ${\mathcal A}_{5}$ and ${\mathfrak S}_{4}$. For general $G$, we provide the value of $\sigma^{pnf}_{hyp}(G)$, correcting a previous statement on this value. In particular, we observe that every genus is the purely non-free genus of some group. 

As an application, we describe the minimal purely non-free actions for the family of metacyclic groups
$G^{0}_{s}(n)=\langle x,y: x^{n}=y^{2}=1, yxy=x^{s}\rangle,$
where $n \geq 3$ and $2 \leq s \leq n-1$ is such that $s^{2} \equiv 1 \mod(n)$. 
\end{abstract}
%%%%%%%%%%%%%%%%%%%

\maketitle

%%%%%%%%%%%%%%
%%%%%%%%%%%%%%
\section{Introduction}
The origins of the study of groups of conformal automorphisms of Riemann surfaces are due to the works of Riemann, Klein, Schwarz, Hurwitz, Greenberg, among others. 

Let $S$ be a Riemann surface with negative Euler characteristic (so its universal cover space is the hyperbolic half-plane ${\mathbb H}^{2}$, and it has a non-abelian fundamental group), and let ${\rm Aut}(S)$ be its group of conformal automorphisms.
If $S$ is a closed Riemann surface of genus $g \geq 2$, then Schwarz \cite{Schwarz} proved that ${\rm Aut}(S)$ is finite; later Hurwitz \cite{Hurwitz} proved that $|{\rm Aut}(S)| \leq 84(g-1)$.
If $S$ is non-compact, then ${\rm Aut}(S)$ might be infinite, but in any case it is a countable group.
In \cite{Greenberg}, Greenberg proved that every finite or countably infinite group can be realized as a group of conformal automorphisms of some Riemann surface of negative Euler characteristic. Moreover, if the group is finite, then the surface can be chosen to be closed (in fact, the closed Riemann surface can also be chosen for the group to coincide with the full group of conformal automorphisms. For infinite-order groups, this fact was proved by Allcock \cite{Allcock}).

A group $G \leq {\rm Aut}(S)$ is said to act {\it purely non-free} (or that it produces a {\it gpnf-action}) if each of its finite-order elements has a fixed point. 
This property has probably been known for a long time, but in the authors' knowledge, J. Gilman was the first to turn attention to it in the context of periodic actions on surfaces in her study of adapted basis for the derived action on the first integral homology group \cite{gilman}. If $G$ has no torsion elements, then every conformal action is gpnf; so this property is interesting only for groups with finite-order elements. In  \cite{BGH1}, gpnf-actions of finite groups on closed Riemann surfaces were studied.

%%%%%%%%
\subsection{Realization results}
In \cite{BGH1}, it was proved that every finite group can be realized as a gpnf-action on a suitable closed Riemann surface of genus at least two.
Our first result states that a similar situation occurs with infinite countable groups. In this case, we need to find non-compact Riemann surfaces with a gpnf-action of the group.

In what follows, a topological surface $X$ is assumed to be connected, Hausdorff, second-countable, and orientable. The classification of surfaces of infinite-type can be found in \cite{Ker,Richards}. The space of ends  ${\rm Ends}(X)$ of a surface $X$ (see \cite{Fre}*{1. Kapitel}) admits a Hausdorff topology homeomorphic to a closed subset of the Cantor set.
This space is called self-similar if, given any pairwise-disjoint clopen subsets $E_{1}, \ldots, E_{n}$
such that ${\rm Ends}(X)= E_{1} \cup \cdots \cup E_{n}$, there exists some $i \in \{1,\ldots,n\}$ and $A \subset E_{i}$ such that $A$ is open
and homeomorphic to ${\rm Ends}(X)$.
In \cite{APV}, the authors proved the following conformal realization result. If $X$ is an infinite-type orientable surface, without planar ends and whose space of ends is self-similar, then every countable group can be realized as a group of conformal automorphisms of a suitable Riemann surface structure on $X$. Their conformal actions might or might not be gpnf-actions. Our first result shows that we can, in fact, choose a gpnf action.

\begin{theo}\label{teo1}
Let $G$ be a finite or an infinite countable group. If $X$ is an infinite-type surface, without planar ends and whose space of ends is self-similar, then there is a Riemann surface structure $S$ on $X$ admitting $G \leq {\rm Aut}(S)$ acting gpnf. 
\end{theo}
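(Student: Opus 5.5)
The plan is to reduce to the Aougab--Patel--Vlamis (APV) conformal realization result from \cite{APV} by first turning $G$ into the fundamental group of a suitable orbifold, and then modifying that construction so that every torsion element is forced to be elliptic. The key observation is that the finite-order elements of $G$ that could act freely are exactly those whose lifts to the Fuchsian group uniformizing the quotient are hyperbolic, or for which the surface group does not supply the fixed points. So we want a presentation in which the finite-order elements of $G$ are images of elliptic generators of an orbifold group.

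Concretely, I would first realize $G$ as a quotient $\theta\colon K \to G$ of a (possibly infinitely generated) Fuchsian group $K$ of the second kind or of infinite type. $K$ should have one elliptic generator $c_h$ of order $o(h)$ for each finite-order element $h\neq 1$ of $G$ (countably many), with $\theta(c_h)=h$. $K$ should also have free hyperbolic generators mapping onto a generating set of $G$, so that $\theta$ is surjective. Since the orders of $c_h$ match those of $h$, the kernel $\Gamma=\ker\theta$ is torsion free. Then $S_0=\mathbb{H}^2/\Gamma$ is a Riemann surface on which $G=K/\Gamma$ acts. Each torsion element $h$ fixes the projection of the fixed point of $c_h$, so the action is gpnf by construction. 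This is exactly the mechanism of \cite{BGH1} in the finite case.

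The remaining task is to arrange that $S_0$ is homeomorphic to the prescribed $X$: infinite type, no planar ends, and self-similar end space. Here I would follow the APV strategy, which I assume works with a quotient orbifold. One chooses the underlying quotient orbifold $\mathcal{O}=\mathbb{H}^2/K$ to be an infinite-type surface $Y$ with the cone points added. Its end space and genus are chosen so that the regular $G$-cover has end space homeomorphic to ${\rm Ends}(X)$, with all ends accumulated by genus. The procedure is as follows.
\begin{enumerate}
\item Start from the APV-type Riemann surface $S'$ homeomorphic to $X$ with a (free) $G$-action, and its quotient $S'/G$.
\item Inside a compact region of $S'/G$, insert countably many cone points. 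Equivalently, glue in equivariant families of discs carrying rotations, one $G$-orbit of discs for each torsion element $h$, with stabilizer $\langle h\rangle$ acting by rotation.
\item Arrange these insertions to escape to an end, or to occur in compact pieces each carrying added genus. This keeps the topology of the cover homeomorphic to $X$ and prevents planar ends from being created.
\end{enumerate}
Self-similarity together with the classification of surfaces of infinite type \cite{Ker,Richards} should absorb these modifications. Finally, the hyperbolic structure is chosen by uniformizing the orbifold, which gives $S$.

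The main obstacle is the topological bookkeeping in this last step. Inserting infinitely many branch orbits (when $G$ has infinitely many torsion elements) must not change the homeomorphism type of the end space nor produce planar ends in the cover. I would handle this with the APV method of a Cantor-tree or self-similar decomposition. For the finite case, if $X$ happens to be compatible, I would instead combine \cite{BGH1} with equivariant gluing of infinitely many copies along a $G$-invariant tree pattern. The handles added at each vertex ensure there are no planar ends, and self-similarity is used to identify the resulting end space with ${\rm Ends}(X)$.
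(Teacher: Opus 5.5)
Your second-half strategy is the paper's strategy: take the surface $S'\cong X$ from \cite{APV} with its conformal $G$-action, modify it equivariantly so that torsion acquires fixed points, and lift a conformal structure from the new quotient. Your first construction, with one elliptic generator $c_h$ per torsion element, does give a gpnf action. But it lives on $\mathbb{H}^2/\ker\theta$, whose topology you never control, and it is not used afterwards.

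The genuine gap is that step (2), which carries all the content, cannot be carried out as stated. Let $D\subset S'/G$ be a closed disc avoiding the branch locus. Its boundary has trivial monodromy. The monodromies of the cone points in any replacement disc must multiply to that boundary monodromy, which is $1$. So you cannot put in a single cone point with monodromy $h\neq 1$. Upstairs the same obstruction reads as follows: the circles of $\pi^{-1}(\partial D)$ have trivial stabilizers, while the boundary of a ``disc carrying a rotation'' by $\langle h\rangle$ is $\langle h\rangle$-invariant, so no equivariant gluing exists. Also, infinitely many cone points cannot sit in a compact region, since cone points are discrete. As a result, your ``topological bookkeeping'' has nothing concrete to check, and appealing to self-similarity to ``absorb'' the modifications is not an argument.

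The missing idea is to insert cone points in pairs. Put in $D$ two cone points of order $n=|C|$, where $C=\langle\phi\rangle$, with monodromies $\phi$ and $\phi^{-1}$. Upstairs this works as follows.
\begin{enumerate}
\item Delete the interiors of the discs $g\widetilde{D}$, $g\in G$, that make up $\pi^{-1}(D)$.
\item Group the resulting boundary circles into blocks $\{gc\widetilde{D}: c\in C\}$ indexed by the left cosets $gC$. Each block is permuted simply transitively by $gCg^{-1}$.
\item Glue to each block, equivariantly, a sphere with $n$ holes on which $\mathbb{Z}_n$ acts by rotation.
\end{enumerate}
Then $gCg^{-1}$ fixes the two poles of the glued sphere. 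Fixed points that existed before are untouched, so it is harmless that the \cite{APV} action need not be free. Each block adds only $n-1$ handles. If the glued pieces are $G$-translates of finitely many compact connected configurations, hence a locally finite family, no new ends are created and no ends are merged. This is automatic when only finitely many classes need fixing. Since $X$ has no planar ends, the Ker\'ekj\'art\'o--Richards invariants are then unchanged, and the new surface is again homeomorphic to $X$; self-similarity enters only through \cite{APV}.

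The paper, via Lemma~\ref{lemita1}, does one such surgery per conjugacy class of freely acting maximal cyclic subgroups. Your choice of one surgery per torsion element also works with this mechanism. It is even more robust for infinite groups such as $\mathbb{Z}_{p^{\infty}}$, where torsion elements need not lie in any maximal cyclic subgroup. When infinitely many surgeries are needed, the discs in the quotient must leave every compact set. The glued blocks must also not join distinct complementary components of large compact sets. That is the precise content your step (3) would have to supply.
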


An example of a surface $X$ satisfying the hypothesis of Theorem \ref{teo1} is the Loch Ness monster (LNM), the unique (up to homeomorphism) infinite-genus orientable surface with exactly one end.

\begin{coro}
Let $G$ be a finite or infinite countable group. Then there is a Riemann surface structure $S$ on the LNM admitting $G \leq {\rm Aut}(S)$ acting gpnf.
\end{coro}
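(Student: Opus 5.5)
The statement to prove is the Corollary. The plan is to deduce it directly from Theorem \ref{teo1} by checking that the Loch Ness monster satisfies the hypotheses there; nothing beyond this verification should be needed.

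\textbf{Step 1: the LNM is of infinite type.} By definition the LNM $X$ is the connected, orientable, second-countable surface of infinite genus with exactly one end. Infinite genus means its fundamental group is not finitely generated, so $X$ is an infinite-type surface.

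\textbf{Step 2: the end space has no planar ends.} Here ${\rm Ends}(X)=\{e\}$ is a single point. Every neighbourhood of $e$ is the complement of a compact set in $X$. Since the genus of $X$ is infinite and a compact subsurface carries only finite genus, every such neighbourhood has infinite genus. Hence $e$ is non-planar, i.e.\ accumulated by genus.

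\textbf{Step 3: the end space is self-similar.} Any decomposition of ${\rm Ends}(X)=\{e\}$ into pairwise disjoint clopen sets $E_{1},\ldots,E_{n}$ must have some $E_{i}=\{e\}$. Taking $A=E_{i}$ gives an open subset homeomorphic to ${\rm Ends}(X)$. Thus the self-similarity condition holds trivially.

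\textbf{Conclusion.} Applying Theorem \ref{teo1} to $X$ and the given finite or countably infinite group $G$ produces a Riemann surface structure $S$ on the LNM with $G\leq{\rm Aut}(S)$ acting gpnf. No step is a real obstacle. The only point needing care is Step 2, namely that the unique end is accumulated by genus, and this follows because compact subsurfaces have finite genus.
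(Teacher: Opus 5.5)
Your proposal is correct and follows the paper's approach: the paper deduces the corollary by observing that the LNM satisfies the hypotheses of Theorem \ref{teo1} (infinite type, no planar ends, self-similar end space) and then applying that theorem, and you simply write out those three checks explicitly. One small wording fix in Step 2: a neighbourhood of the unique end \emph{contains} the complement of a compact set, rather than \emph{being} one; this still yields infinite genus in every neighbourhood, so the argument is unaffected.
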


%%%%%%%%%%%%
\subsection{Upper bounds for gpnf-actions}
Hurwitz's bound asserts that, for $S$ a Riemann surface of genus $g \geq 2$, $|{\rm Aut}(S)|\leq 84(g-1)$. If we restrict to gpnf-actions, the upper bound is much smaller.

\begin{theo}\label{cota}
Let $S$ be a closed Riemann surface of genus $g \geq 2$, and $G \leq {\rm Aut}(S)$ acting gpnf. 
\begin{enumerate}[leftmargin=15pt]
\item If either: (i) $g$ is even or (ii) $g$ is odd and $(g-1)$ has a prime factor different from $2$ and $5$, then $|G| \leq 24(g-1)$.
\item If $S/G$ has no signature of the form $(0;2,4,n)$ and $(g-1)$ has a prime factor $p \notin \{2,3,5,7,11,13,19,23,29,47\}$, then $|G| \leq 8(g-1)$.
\item If $(g-1)$ has a prime factor $p \notin \{2,3,5,7,11,13,17,19,23,29,31,47\}$, then $|G| \leq 9(g-1)$.
\end{enumerate}
\end{theo}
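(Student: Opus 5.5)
The plan is to combine the Riemann--Hurwitz formula with the strong arithmetic restriction that gpnf places on element orders. Let $S/G$ have signature $(\gamma;m_1,\ldots,m_r)$, so that
$$2(g-1)=|G|\Bigl(2\gamma-2+\sum_{i=1}^{r}\bigl(1-\tfrac{1}{m_i}\bigr)\Bigr).$$
The gpnf hypothesis says that every non-trivial element of $G$ fixes a point. Hence every element is conjugate to a power of one of the canonical elliptic generators $x_1,\ldots,x_r$. Consequently the set of element orders of $G$ is exactly the set of divisors of $m_1,\ldots,m_r$. In particular, every prime dividing $|G|$ divides some $m_i$, and (when $r=3$) $G$ is covered by the conjugates of the three cyclic subgroups $\langle x_i\rangle$.

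For (1), assume $|G|>24(g-1)$. Then the bracket is $<1/12$, which forces $\gamma=0$ and $r=3$. A short enumeration gives only the signatures $(0;2,3,n)$ with $7\le n\le 11$ and $(0;2,4,5)$; note that $(0;3,3,4)$ gives exactly $1/12$ and is not allowed. In each case $|G|=k(g-1)$ with an explicit $k$ (namely $84,48,36,\frac{120}{7}\cdot\ldots$ computed from the bracket), and the prime divisors of $|G|$ lie in the primes dividing $2,3,n$, respectively in $\{2,5\}$ for $(2,4,5)$.

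For $(0;2,4,5)$ we have $|G|=40(g-1)$, so every prime factor of $g-1$ lies in $\{2,5\}$. The parity condition then has to be checked: if $g$ is even, then $g-1$ is odd, so $|G|=2^3\cdot 5^{a}$. I would rule this out by a Sylow/solvability argument, since $\{2,5\}$-groups are solvable. Concretely, $G$ has a normal subgroup of index $2$ or $5$ because the abelianization of the $(2,4,5)$ triangle group is $\mathbb{Z}_2$. This subgroup must contain elements realizing each order $1,2,4,5$ up to conjugacy, and I would push this to a contradiction with the covering-by-conjugates property.

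For the signatures $(0;2,3,n)$, $7\le n\le 11$, the gpnf condition forces the spectrum of $G$ to be contained in $\{1,2,3\}\cup\{d: d\mid n\}$. In particular $G$ has no elements of order $6$, and for $n=7,11$ every element has prime order. I would invoke the classification of groups all of whose elements have prime-power order (Higman, Suzuki; the EPPO/CP groups). Solvable such groups that are $(2,3,n)$-generated are excluded because their abelianization or Fitting quotient is incompatible with the triangle relations. The nonsolvable ones form a short explicit list: $\mathrm{PSL}(2,q)$ for $q\in\{4,7,8,9,17\}$, $\mathrm{PSL}(3,4)$, $\mathrm{Sz}(8)$, $\mathrm{Sz}(32)$, and extensions of these. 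For each I check the spectrum against $\{2,3,n\}$; for instance, $\mathrm{PSL}(2,7)$ has elements of order $4$ and $\mathrm{PSL}(2,8)$ has elements of order $9$. This yields $|G|\le 24(g-1)$.

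For (2) and (3) the same scheme is run with the weaker thresholds $8$ and $9$. Now $|G|>8(g-1)$ (resp. $>9(g-1)$) permits many more triangle signatures, and a few quadrilateral ones $(0;2,2,2,n)$ whose bracket is $\ge 1/4$, so they are excluded. Here I would not try to kill each signature. Instead, I would show that any gpnf group realizing such a signature has all its prime divisors in the finite list of primes occurring in spectra of the CP/EPPO-type groups that survive. The hypotheses list exactly these primes: $2,3,5,7,11,13,17,19,23,29,31,47$. Then $|G|=k(g-1)$ forces every prime of $g-1$ into that list. In (2) the family $(0;2,4,n)$ must be excluded separately, since there $n$ is unbounded and the prime argument fails.

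The main obstacle is this group-theoretic step: controlling finite groups whose spectrum is the set of divisors of $\{m_1,m_2,m_3\}$. I would first try to reduce to the EPPO/CP classification, using that gpnf plus small signatures forbids mixed-order elements such as order $6$. Where that reduction fails, namely for $n=8,9,10$, I would fall back on a direct analysis of a minimal normal subgroup together with the quotient, which is again gpnf on $S/N$.
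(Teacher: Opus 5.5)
Your proposal has genuine gaps. The first missing idea is the exclusion of the \emph{whole} family $(0;2,3,n)$, $n\ge 7$. In parts (2) and (3) this family is infinite and lies entirely above both thresholds, since $|G|=\frac{12n}{n-6}(g-1)>12(g-1)$ for every $n\ge 7$. Under gpnf, $g-1=|G|\frac{n-6}{12n}$ would then be divisible by every prime $p\ge 5$ dividing $n-6$. So unless the whole family is ruled out, no finite list of primes can work; you flagged only $(0;2,4,n)$ as unbounded. The EPPO/CP classification cannot rule it out. Whenever $n$ is not a prime power (already $n=10$ in part (1)), the generator of order $n$ has mixed order, and there are infinitely many such $n$. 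The paper instead invokes Theorem~\ref{grupotriangular}: no action with quotient signature $(0;2,3,n)$, $n\ge 7$, is gpnf. That theorem should also replace your analysis in part (1), whose solvable case is not actually argued. Abelianization alone cannot exclude solvable quotients: the $(2,3,8)$ triangle group has abelianization $\mathbb{Z}_2$ and solvable quotients such as $\mathrm{GL}(2,3)$. For $(0;2,4,5)$, the step you leave vague is a Sylow count. Since $n_5\mid 8$ and $n_5\equiv 1\pmod{5}$, the Sylow $5$-subgroup $K$ is normal. Then $G/K$, of order $8$, is generated by the images of the generators of orders $2$ and $4$, whose product is trivial modulo $K$. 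Hence $|G/K|\le 2$, a contradiction.

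The second problem is the inference \emph{all primes of $|G|$ lie in the list, hence so do all primes of $g-1$}. It is false: $g-1=|G|\mu/2$, and the numerator of the Riemann--Hurwitz bracket $\mu$ contributes primes that need not divide $|G|$. This arithmetic, not the spectra of surviving EPPO groups, is where the primes in the statement come from. For example, $(0;2,5,n)$ gives $|G|=\frac{20n}{3n-10}(g-1)$, hence $23,29,47$ for $n=11,13,19$, and $(0;2,4,35)$ gives $31$. The paper's route is elementary. Gpnf makes every prime of $|G|$ divide a period, and for each of the finitely many remaining signatures the explicit ratio $|G|/(g-1)$ confines the primes of $g-1$ to those of the periods and of its denominator. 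Also, $(0;2,2,2,3)$ has $\mu=\frac16<\frac14$, so it is not excluded by area; it gives $|G|=12(g-1)$ and is removed by the prime argument.

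Be warned, though, that this enumeration, carried out completely, shows part (2) is false as stated. The paper's list omits $(0;2,6,n)$ for $6\le n\le 11$ and $(0;2,7,n)$ for $7\le n\le 9$, and $(0;2,7,9)$ gives $|G|=\frac{252}{31}(g-1)$. This case actually occurs:
\begin{itemize}
\item $\mathrm{PSL}(2,8)$ is $(2,7,9)$-generated. The structure constant for the classes of orders $2,7,9$ is positive, and none of the maximal subgroups (of orders $56$, $18$, $14$) has order divisible by $126$.
\item Its element orders are $1,2,3,7,9$. Its involutions form one class, its cyclic subgroups of order $7$ (resp.\ $9$) are all conjugate, and every element of order $3$ lies in one of order $9$.
\end{itemize}
Hence the action is gpnf on genus $63$, with $|G|=504>8\cdot 62$ and $31\mid g-1$, contradicting (2). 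Your argument for (2) breaks exactly at this step and cannot be repaired. Parts (1) and (3) do survive the complete enumeration, granted Theorem~\ref{grupotriangular}.
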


\begin{rema}
Let $g \geq 2$.
The closed Riemann surface $S_{W}: w^{2}=z(z^{2g}-1)$ (called the Wimann curve) has genus $g$ admits the generalized quasi-dihedral group $GQ_{g}=\langle x,y:x^{4g}=y^{2}=1, yxy=x^{2g-1}\rangle$ (of order $8g$) as a group of conformal automorphisms. In \cite{BGH2}, it was observed that, if $g \geq 2$ is even, then the action of $GQ_{g}$ is gpnf, and the quotient signature of $S_{W}/GQ_{g}$ is $(0;2,4,4g)$. If $g \geq 3$ is odd, then that action is not gpnf. But, in this case, the dicyclic group $DC_{g} \leq {\rm Aut}(S)$ (a subgroup of order $4g$) acts gpnf. Moreover, 
there are infinitely many even values of $g$ so that, if $G$ acts gpnf on a closed Riemann surface of such a genus, then $|G| \leq 8g$. 
\end{rema}

%%%%%%%%%%%%
\subsection{Minimal genus results}
Let $G$ be a non-trivial finite group.

%%%%%%%%%%%
\subsubsection{\bf The strong symmetric genus}
The {\it strong symmetric genus} of $G$, denoted by $\sigma^{0}(G)$, is the minimal genus of a closed Riemann surface admitting $G$ as a group of conformal automorphisms. 
As a consequence of Greenberg's result, one can also define the parameter $\sigma_{hyp}^{0}(G)$ as the minimum genus at least two of a closed Riemann surface admitting $G$ as a group of conformal automorphisms.  The case $\sigma^{0}(G)=0$ holds exactly when $G$ is either a cyclic group ${\mathbb Z}_{n}$ (of order $n$), or the dihedral group ${\mathbb D}_{n}$ (of order $2n$), or the alternating group ${\mathcal A}_{4}$ (of order $12$), or the alternating group ${\mathcal A}_{5}$ (of order $60$), or the symmetric group ${\mathfrak S}_{4}$ (of order $24$). The description of those with $\sigma^0(G)=1$ is provided in \cite{GroT}.
For the rest of groups $G$, it holds that $\sigma^{0}(G)=\sigma^{0}_{hyp}(G)$. In \cite{May3}, it was proved that every genus is the strong symmetric genus of some group. The reader may consult, for instance, \cite{MZ7,May3,May10}, for some general results on these parameters.

%%%%%%%%%%%
\subsubsection{\bf The purely non-free genus}
Similarly, as a consequence of the realization results in \cite{BGH1}, one can also define the parameter $\sigma^{pnf}(G)$ as the minimum genus of a closed Riemann surface admitting a gpnf-action by $G$, and the parameter $\sigma_{hyp}^{pnf}(G)$ (which we will call the {\it purely non-free genus} of $G$) as the minimum genus of at least two of a closed Riemann surface admitting a gpnf-action by $G$. Again, the equality $\sigma^{pnf}(G)=0$ holds if and only if $G$ is either cyclic, or a dihedral group, of ${\mathcal A}_{4}$, or ${\mathcal A}_{5}$, or ${\mathfrak S}_{4}$. In the complementary cases, $\sigma^{pnf}(G)=\sigma^{pnf}_{hyp}(G)$ (i.e., there are is no group $G$ with $\sigma^{pnf}(G)=1$).

%%%%%%%%%%
\subsubsection{}
By the definitions, $\sigma^{0}_{hyp}(G) \leq \sigma_{hyp}^{pnf}(G)$. In Section \ref{Sec:ejemplos}, we provide examples for which one has $\sigma^{0}_{hyp}(G) < \sigma_{hyp}^{pnf}(G)$, so the fact that every genus $g \geq 2$ is the strong genus of some group does not imply that the same holds for the purely non-free genus. We will see (see Corollary \ref{DCn}) that, for every integer $n \geq 2$, $\sigma^{pnf}(DC_{n})=\sigma^{pnf}_{hyp}(DC_{n})=n$; thus every genus $g \geq 2$ is the purely non-free genus of some group. An interesting question is to describe the gpnf-spectrum of $G$, i.e., the set ${\rm gpnf}-{\rm spec}(G)$ consisting of those genera $g$ of Riemann surfaces admitting a gpnf-action of $G$. We will not discuss this problem here.
In this paper, we will be mainly interested in the computation of $\sigma_{hyp}^{pnf}(G)$. 

%%%%%%%%%%%
\subsubsection{\bf Some known cases}
Before stating our main results, let us recall some previous results.

{\bf (1) Case of cyclic groups.}
In \cite{H}, Harvey computed $\sigma_{hyp}^{0}({\mathbb Z}_{n})$ for $n \geq 2$ (see Section \ref{Sec:ciclico}). There might be topologically different conformal actions of ${\mathbb Z}_{n}$ on genus $\sigma_{hyp}^{0}({\mathbb Z}_{n})$; for instance, there are two different actions of ${\mathbb Z}_{6}$ on genus two; one is gpnf and the other is not.
 It can be observed that there is always a ${\mathbb Z}_{n}$-action on genus $\sigma_{hyp}^{0}({\mathbb Z}_{n})$ acting gpnf, so $\sigma^{pnf}_{hyp}({\mathbb Z}_{n})=\sigma^{0}_{hyp}({\mathbb Z}_{n})$ \cite{BGH1}.

{\bf (2) Case of abelian groups.}
In \cite{McL}, Maclachlan computed $\sigma_{hyp}^{0}(G)$ for $G$ a non-cyclic abelian group. 
The action of ${\mathbb Z}_{2}^{2}$ on genus two is gpnf, so $\sigma^{pnf}_{hyp}({\mathbb Z}_{2}^{2})=\sigma^{0}_{hyp}({\mathbb Z}_{2}^{2})=2$. In \cite{BGH1}, it was obtained the value of $\sigma^{pnf}_{hyp}(G)$ for $G$ non-cyclic abelian group different from ${\mathbb Z}_{2}^{2}$. 

{\bf (3) Cases: ${\mathcal A}_{4}$, ${\mathcal A}_{5}$ and ${\mathfrak S}_{4}$}
It is known that $\sigma^{0}_{hyp}({\mathcal A}_{4})=\sigma^{pnf}_{hyp}({\mathcal A}_{4})=3$, 
$\sigma^{0}_{hyp}({\mathcal A}_{5})=6<\sigma^{pnf}_{hyp}({\mathcal A}_{5})=30$, and   
$\sigma^{0}_{hyp}({\mathfrak S}_{4})=3<\sigma^{pnf}_{hyp}({\mathfrak S}_{4})=12$.

%%%%%%%%%%%%%%%%%%%
\subsubsection{\bf Case of dihedral groups}
In \cite{McLTesis}, Maclachlan computed $\sigma_{hyp}^{0}(G)$ for $G={\mathbb D}_{n}$ the dihedral group of order $2n$, where $n \geq 3$.
In the following, we describe the value of $\sigma^{pnf}_{hy}({\mathbb D}_{n})$.

\begin{theo}\label{teo2} 
Let $n=p_{1}^{\alpha_{1}} q \geq 3$, be such that $p_{1}$ is the smallest prime divisor of $n$, $\alpha_{1} \geq 1$ and $\gcd(p_{1},q)=1$.
\begin{enumerate}[leftmargin=15pt]
\item $\sigma_{hyp}^{pnf}({\mathbb D}_{3})=\sigma_{hyp}^{0}({\mathbb D}_{3})=2$.

\item If either (i) $\alpha_{1} \geq 2$, or (ii) $\alpha_{1}=q=1$, or (iii) $\alpha_{1}=1$, $q \geq 2$ and $n$ is odd,
then $\sigma_{hyp}^{pnf}({\mathbb D}_{n})=\sigma_{hyp}^{0}({\mathbb D}_{n})=n-n/p_{1}$.

\item If $\alpha_{1}=1$, $q \geq 2$ and $n$ is even, then  $\sigma_{hyp}^{pnf}({\mathbb D}_{n})=n-n/q_{1}$, where $q_{1}$ is the smallest prime divisor of $q$.
\end{enumerate}
\end{theo}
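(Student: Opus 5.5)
We use the Riemann existence theorem in its usual combinatorial form. A gpnf-action of $G$ on genus $g$ with quotient signature $(\gamma;m_1,\dots,m_r)$ amounts to a surface-kernel epimorphism $\theta:\Gamma\to G$ from the Fuchsian group with that signature. Purely non-free means that every non-trivial element of $G$ is conjugate into some cyclic subgroup $\langle\theta(c_j)\rangle$, where the $c_j$ are the elliptic generators. The genus is then given by Riemann--Hurwitz:
$$2(g-1)=2n\Big(2\gamma-2+\sum_{j}(1-1/m_j)\Big),$$
since $|{\mathbb D}_n|=2n$. For ${\mathbb D}_n=\langle a,b: a^n=b^2=(ab)^2=1\rangle$, every element outside $\langle a\rangle$ is a reflection of order $2$. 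Three constraints follow.
\begin{enumerate}[leftmargin=15pt]
\item The images of the elliptic generators must include reflections in every conjugacy class of reflections. There is one class if $n$ is odd and two if $n$ is even.
\item Every rotation $a^k$ must be a power of some $\theta(c_j)$. So the rotation-type periods $m_j$ (those with $\theta(c_j)\in\langle a\rangle$) must satisfy: every divisor $d>1$ of $n$ divides some such $m_j$.
\item Since $\theta$ is onto, the product relation forces the reflection images to be even in number. Moreover, the rotations together with the products of pairs of reflections must generate $\langle a\rangle$.
\end{enumerate}

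\emph{Lower bound.} We take $\gamma=0$; the case $\gamma\ge1$ gives larger genus and is dismissed by a crude estimate. Write the signature as $(0;2^{[2s]},m_1,\dots,m_t)$ with the $m_i$ dividing $n$. Then
$$g-1=n\Big(-2+s+\sum_i(1-1/m_i)\Big).$$
If $t=0$, rotations would only arise as products of reflections and would not be elliptic, so $t\ge1$. If $s=0$, the image would lie in $\langle a\rangle$, so $s\ge1$.
\begin{itemize}[leftmargin=15pt]
\item With $s=1$ we need $t\ge2$. Then $g-1\ge n(\sum_i(1-1/m_i)-1)$, where the $m_i$ are divisors of $n$ whose lcm-covering property (2) holds.
\item With $s\ge2$ and $t=1$ we need $m_1=n$, which gives $g-1=n(1-1/n)=n-1$ for the signature $(0;2,2,2,2,n)$. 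This is the upper-bound comparison case.
\end{itemize}
In the case $s=1$, $t=2$, the signature is $(0;2,2,m_1,m_2)$ with $\theta(c_1)\theta(c_2)\theta(c_3)\theta(c_4)=1$. The two reflections multiply to a rotation, which equals $(\theta(c_3)\theta(c_4))^{-1}$. For this to generate, together with the rest, all of ${\mathbb D}_n$, one needs roughly $\langle a^{n/m_1},a^{n/m_2}\rangle=\langle a\rangle$ and the product relation. We also need both reflection classes when $n$ is even. This occurs when the two reflections are $b$ and $ba^{k}$ with $k$ odd, since then they differ by an odd rotation.

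The optimization is then over pairs $m_1,m_2$ with ${\rm lcm}=n$ covering all prime-power divisors. Minimizing $1-1/m_1-1/m_2$ is done by choosing $m_1=n$ and $m_2$ as small as possible. Taking $m_2=p_1$ gives $g-1=n-1-n/p_1$, i.e. $g=n-n/p_1$, which matches Maclachlan's $\sigma^0_{hyp}$. This handles case (2), provided the parity/conjugacy constraint allows it. Case (1) is the signature $(0;2,2,3,3)$ with $n=3$.

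\emph{The main obstacle} is case (3): $n=2q$ with $q$ odd. The candidate $m_2=2$ forces $\theta(c_4)=a^{n/2}$. The reflection pair then multiplies to $a^{\pm1}a^{\pm n/2}$, and we must check that both reflection classes and the covering condition (2) cannot be simultaneously achieved. We will need an explicit parity computation showing that this choice makes the two reflections conjugate, or else misses the required rotations. The obstruction should come from the fact that the reflections' product then has order dividing $q$, so the rotation-type generators must supply the factor $2$. The next candidate is $m_2=q_1$, giving $g=n-n/q_1$, and we will exhibit an explicit epimorphism for it.

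We must also rule out alternatives with $t\ge3$ or $s\ge2$, for instance $(0;2,2,2,2,n)$ with $g=n$, and $(0;2,2,2,m,\dots)$-type signatures. We verify by direct comparison that each of these yields genus at least $n-n/q_1$.
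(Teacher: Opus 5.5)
Your plan follows the paper in outline. You restrict to the signatures allowed by Proposition~\ref{propo1}. You realize $(0;2,2,p_1,n)$, resp.\ $(0;2,2,q_1,n)$, by $c_1\mapsto a$, $c_2\mapsto a^{m}$, $c_3\mapsto b$, $c_4\mapsto a^{1+m}b$ with $m=n/p_1$, resp.\ $m=n/q_1$. You exclude $(0;2,2,2,n)$ when $n\equiv 2 \pmod 4$.

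The difference is the lower bound. The paper gets case (2) from $\sigma^{0}_{hyp}\le\sigma^{pnf}_{hyp}$ and Maclachlan's value. It gets case (3) from the assertion that every epimorphism from the $(0;2,2,2,n)$ group is ${\rm Aut}({\mathbb D}_n)$-equivalent to $\theta_1$. You instead minimize directly over admissible signatures and argue by parity. That route is cleaner, but you should write the computation out:
\begin{itemize}
\item Let the order-$n$ image be $a^u$; $u$ is odd, being a unit modulo the even number $n$. Let the other rotation be $a^{n/2}$.
\item In every cyclic ordering of the four generators, the two reflection images $a^ib$, $a^jb$ then satisfy $i-j\equiv u+n/2 \pmod 2$.
\item Hence they lie in different conjugacy classes if and only if $n/2$ is even.
\end{itemize}
This single line gives existence in case (2) for even $n$ and the obstruction in case (3). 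Replacing $a^{n/2}$ by the order-$q_1$ rotation $a^{c}$, where $c$ is a multiple of the even number $n/q_1$, it also gives existence for $(0;2,2,q_1,n)$. It works regardless of which period-$2$ generator carries $a^{n/2}$. The paper's rigidity claim, phrased only through automorphisms of ${\mathbb D}_n$, glosses over that point.

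Two things must be corrected.

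First, your later paraphrase of constraint (2) as \emph{lcm equal to $n$, covering the prime-power divisors} is wrong, and it would break the optimization. When $\alpha_1=1$ and $q\ge 2$, the pair $(p_1,q)$ passes that test. Its area $1-1/p_1-1/q$ is smaller than the area $1-1/n-1/p_1$ of the pair $(n,p_1)$. It is exactly Maclachlan's signature $(0;2,2,p_1,n/p_1)$, which is not gpnf because $a$ has no fixed points. The correct condition is that $n$ itself occurs as a period. Your (2) yields this for $d=n$, and it is Proposition~\ref{propo1}. Only then is $m_1=n$ forced rather than chosen.

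Second, in subcase (iii) ($n$ odd, $\alpha_1=1$, $q\ge2$) the value $n-n/p_1$ does not match Maclachlan. There $\sigma^{0}_{hyp}({\mathbb D}_n)=n+1-p_1-n/p_1$. For instance, $n=15$ gives genus $8$ via $(0;2,2,3,5)$, while $\sigma^{pnf}_{hyp}({\mathbb D}_{15})=10$. So the equality $\sigma^{pnf}_{hyp}({\mathbb D}_n)=\sigma^{0}_{hyp}({\mathbb D}_n)$ asserted in (iii) is false and cannot be proved. The paper's proof makes the same slip when it says that the genus of $S_1$ is $\sigma^{0}_{hyp}({\mathbb D}_n)$. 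What your argument does prove there is $\sigma^{pnf}_{hyp}({\mathbb D}_n)=n-n/p_1>\sigma^{0}_{hyp}({\mathbb D}_n)$. It succeeds precisely because your lower bound does not pass through $\sigma^{0}_{hyp}$, which in (iii) is too small to give it.
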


%%%%%%%%%%%%%%
\subsubsection{\bf Computation of $\sigma^{pnf}_{hyp}(G)$}
As we have the value of $\sigma^{pnf}_{hyp}(G)$ for cyclic groups, ${\mathbb Z}_{2}^{2}$, dihedral groups, ${\mathcal A}_{4}$, ${\mathcal A}_{5}$ and ${\mathfrak S}_{4}$. 
The following (whose proof is provided in Section \ref{Sec:prueba0}) corrects the statement in \cite[Proposition 3.3]{BGH1} which, as stated there, is in general false (anyway, for $G$ a non-cyclic abelian group different from ${\mathbb Z}_{2}^{2}$, the result is correct).

\begin{theo}[The value of $\sigma^{pnf}_{hyp}(G)$]\label{teo0}
Let $G$ be a finite group, and $C_{1},\ldots,C_{L_{G}}$ be a collection of pairwise non-conjugate maximal cyclic groups such that $|C_{i}|=m_{i}$.
Let ${\mathcal F}$ be the collection of those tuples $(B_{1},\ldots,B_{L_{G}})$, where each $B_{i}$ is $G$-conjugate to $C_{i}$ (so
$\{B_{1},\ldots,B_{L_{G}}\}$ is a collection of pairwise non-conjugate maximal cyclic subgroups).  For each tuple $B:=(B_{1}=\langle \phi_{1}\rangle,\ldots,B_{L_{G}}=\langle \phi_{L_{G}}\rangle) \in {\mathcal F}$, 
let 
$$m_{B}={\rm Min}\left\{|\phi_{1}^{l_{1}} \cdots \phi_{L_{G}}^{l_{L_{G}}}|: 1 \leq l_{i} \leq |B_{i}|-1, \; \gcd(|B_{i}|,l_{i})=1, \;1 \leq i \leq L_{G}    \right\}.$$
If $m_{G}={\rm Min}\{m_{B}: B \in {\mathcal F}\}$, then
\begin{equation}\label{formula1}
\sigma^{pnf}(G)=1+\frac{|G|}{2}\left(L_{G}-1-\frac{1}{m_{G}}-\sum_{i=1}^{L_{G}}\frac{1}{m_{i}} \right).
\end{equation}
\end{theo}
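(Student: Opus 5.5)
Via Riemann's existence theorem, the statement becomes a minimization over generating vectors. A gpnf-action of $G$ on a closed surface $S$ of genus $g$ corresponds to a surjection $\theta:\Gamma\to G$ with torsion-free kernel, where $\Gamma$ is a Fuchsian (or spherical/Euclidean) group of signature $(\gamma;n_1,\ldots,n_r)$. The gpnf condition says that every non-trivial cyclic subgroup of $G$ is conjugate into some $\langle\theta(c_j)\rangle$, where the $c_j$ are the elliptic generators. Since maximal cyclic subgroups are not properly contained in other cyclic subgroups, this holds if and only if every $C_i$ is conjugate to some $\langle\theta(c_j)\rangle$. By Riemann--Hurwitz,
$$g=1+\frac{|G|}{2}\Big(2\gamma-2+\sum_{j=1}^r\big(1-\tfrac1{n_j}\big)\Big),$$
so we must minimize this quantity over admissible data.

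For the \emph{lower bound}, take any gpnf generating vector $(a_1,b_1,\ldots,a_\gamma,b_\gamma,x_1,\ldots,x_r)$ with $\prod[a_k,b_k]\prod x_j=1$.

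1. For each $i$, choose an index $j(i)$ with $\langle x_{j(i)}\rangle$ conjugate to $C_i$. The indices $j(i)$ are pairwise distinct, since the $C_i$ are pairwise non-conjugate.
2. The contribution of these $L_G$ branch values is already $\sum_i(1-1/m_i)$. If $\gamma\ge1$, the term $2\gamma\ge2$ dominates the remaining contribution $1-1/m_G$ appearing in \eqref{formula1}, so we may assume $\gamma=0$.
3. If $r=L_G$, conjugate the whole vector so that the relation reads $x_{j(1)}\cdots x_{j(L_G)}=1$ after reordering. Reordering is allowed via braid moves, which conjugate entries but keep the conjugacy classes of the cyclic subgroups. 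Then $x_{j(i)}=\phi_i^{l_i}$ for conjugates $B_i$ of $C_i$, with $\gcd(l_i,|B_i|)=1$. This gives $m_B=1$ for some $B$, i.e. the "extra" cone point of order $1$ contributes $0$, and formula \eqref{formula1} holds with $m_G=1$.
4. If $r>L_G$, group the extra generators and let $y$ be their product. The product of the $L_G$ chosen generators equals $y^{-1}$, which is a product of the form defining $m_B$, so its order is at least $m_G$.
5. It remains to show that the extra cone points contribute at least $1-1/m_G$. One extra point of order $n\ge2$ gives $1-1/n$. Here $n$ is the order of a single extra $x_j$, which equals the order of the product of the others, hence $n\ge m_B\ge m_G$. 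Two or more extra points contribute at least $1\ge 1-1/m_G$.

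The \emph{upper bound} is by construction. Choose $B\in\mathcal F$ and exponents $l_i$ attaining $m_G$, and put $x_i=\phi_i^{l_i}$ for $i\le L_G$ and $x_{L_G+1}=(x_1\cdots x_{L_G})^{-1}$. This uses signature $(0;m_1,\ldots,m_{L_G},m_G)$, with the last entry omitted if $m_G=1$.

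The vector generates $G$ because every element of $G$ lies in some maximal cyclic subgroup, and the normal closure of the $x_i$ contains all of them. Here care is needed: normal closure is not the generated subgroup. I would handle this by showing that $H=\langle x_1,\ldots,x_{L_G}\rangle$ meets every conjugacy class of maximal cyclic subgroups; a subgroup that meets every conjugacy class of cyclic subgroups, and in particular of elements, must be all of $G$, by the standard counting argument that a proper subgroup cannot meet every conjugacy class.

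The kernel is torsion-free because the orders are matched, and the action is gpnf by the criterion above. Finally, we need the signature to be hyperbolic, which gives the genus and completes the proof.

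\textbf{Main obstacle.} I expect the main difficulty in steps 1--5 to be the $\gamma=0$ reduction together with the braid-reordering argument. After braid moves, the chosen elements are conjugates of the $C_i$, but they may come with different conjugators, and the product in the definition of $m_B$ must match exactly. I would handle this using the freedom in choosing $B\in\mathcal F$: each $B_i$ ranges independently over its conjugacy class, which is exactly why the minimum is taken over tuples. The interleaving of extra generators between chosen ones is absorbed by Hurwitz moves that only conjugate the chosen entries.
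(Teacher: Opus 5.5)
Your proposal is correct and follows essentially the paper's route. The upper bound comes from the generating vector $(\phi_1^{l_1},\ldots,\phi_{L_G}^{l_{L_G}},(\phi_1^{l_1}\cdots\phi_{L_G}^{l_{L_G}})^{-1})$ with signature $(0;m_1,\ldots,m_{L_G},m_G)$, and the lower bound uses Proposition~\ref{propo1}: $\gamma\geq 1$ or two or more extra cone points cost more than $1-1/m_G$, while a single extra cone point has order at least $m_G$.

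You are in fact more careful than the paper, which justifies neither the reordering (your Hurwitz moves) nor that the chosen tuple generates $G$; your Jordan-lemma argument shows that hypothesis (1) of Theorem~\ref{teogpnf} is automatic. The one loose end is your closing remark that the signature must be hyperbolic. It is superfluous, since Riemann's existence theorem also covers the spherical case, which is exactly what occurs for the exceptional groups, where the formula returns $0$.
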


\begin{rema}
In the case that $G$ is different form either ${\mathbb Z}_{n}$, ${\mathbb Z}_{2}^{2}$, ${\mathbb D}_{n}$, ${\mathcal A}_{4}$, ${\mathcal A}_{5}$ and ${\mathfrak S}_{4}$, then the formula \eqref{formula1} gives the value $\sigma^{pnf}_{hyp}(G)$.

\end{rema}

The above can be implemented in GAP \cite{GAP} (see Section \ref{Sec:ejemplos} for some explicit examples).  
The explicit form of $\sigma^{pnf}_{hyp}(G)$ described in Theorem \ref{teo0} permits computing it for many known families of groups.

\begin{coro}\label{DCn}
If $n \geq 2$ is an integer and $DC_{n}$ is the dicyclic group of order $4n$, then $\sigma^{pnf}(DC_{n})=\sigma^{pnf}_{hyp}(DC_{n})=n$.
Thus, every genus is the purely non-free genus of some group.
\end{coro}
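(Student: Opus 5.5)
We apply Theorem \ref{teo0} to $G=DC_{n}=\langle x,y: x^{2n}=1,\ y^{2}=x^{n},\ yxy^{-1}=x^{-1}\rangle$, of order $4n$. We first list the maximal cyclic subgroups up to conjugacy, then compute $m_{G}$, and finally substitute into \eqref{formula1}.

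\textbf{Maximal cyclic subgroups.} Every element outside $\langle x\rangle$ has the form $x^{k}y$. Since $(x^{k}y)^{2}=x^{k}yx^{k}y^{-1}y^{2}=x^{k}x^{-k}x^{n}=x^{n}$, each such element has order $4$ and generates $\langle x^{k}y\rangle=\{1,x^{k}y,x^{n},x^{k+n}y\}$. These order-$4$ subgroups are maximal cyclic: a cyclic group properly containing one of them would have to lie in $\langle x\rangle$, which is impossible because it contains $x^{k}y$. The subgroup $\langle x\rangle$ (order $2n$) is maximal cyclic as well. Conjugation acts on the elements $x^{k}y$ by $x(x^{k}y)x^{-1}=x^{k+2}y$, and $y$ sends $x^{k}y$ to $x^{-k}y$, which has the same parity of $k$. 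Hence there are two conjugacy classes of order-$4$ maximal cyclic subgroups: $\langle y\rangle$ ($k$ even) and $\langle xy\rangle$ ($k$ odd). For $n=1$ the two classes would collide; this is why we need $n\ge2$. So $L_{G}=3$ with $m_{1}=2n$, $m_{2}=m_{3}=4$. For $n\geq 2$, $DC_n$ is not cyclic, since it contains two distinct subgroups of order $2$ when $n$ is even and the quaternion structure when $n=2$. This check is worth recording but is not needed for the count.

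\textbf{Computing $m_{G}$.} Take representatives $B=(\langle x^{a}\rangle=\langle x\rangle,\langle x^{2i}y\rangle,\langle x^{2j+1}y\rangle)$. Each product $\phi_{1}^{l_{1}}\phi_{2}^{l_{2}}\phi_{3}^{l_{3}}$ has the form $x^{u}\cdot x^{c}y\cdot x^{d}y$, where $u$ is a unit mod $2n$ and $c,d$ have opposite parities. Such a product equals $x^{u+c-d}y^{2}=x^{u+c-d+n}$. We want to minimize the order of this element of $\langle x\rangle$, i.e. to make $e=u+c-d+n$ as close to $0$ modulo $2n$ as possible in the sense of order. Here $c-d$ is odd, and varying $i$, $j$ and the exponents $\pm1$ makes $c-d$ range over all odd residues. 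Since $u$ is odd, $u+c-d$ ranges over all even residues. Choosing $u+c-d\equiv n$ or $\equiv -n$ gives $e\equiv0$ only when $n$ is even. So the attainable values of $e$ are exactly the residues $\equiv n \pmod 2$.
\begin{itemize}
\item If $n$ is even, $e=0$ is attainable. The product is then trivial, which in the framework of Theorem \ref{teo0} corresponds to $m_{G}=1$.
\item If $n$ is odd, $e$ is odd, and the minimal order is attained at $e=n$, giving $m_{G}=2$.
\end{itemize}

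This parity bookkeeping is where I expect the difficulty to lie. In particular I must check whether the formula's minimum allows a trivial product, i.e. whether $m_{G}=1$ is permitted. Plugging in, the case $m_{G}=1$ gives genus $1+2n(2-1-\frac{1}{2n}-\frac12)=n$. The case $m_{G}=2$ gives $1+2n(\frac12-\frac{1}{2n}-\frac12)=0$, which is absurd. So the computation must be redone: for odd $n$ one should expect $m_{G}=1$ as well. Indeed, the signs of the exponents $l_{2},l_{3}\in\{1,3\}$ contribute an extra factor $y^{2}=x^{n}$ per sign change. Using this extra freedom, the attainable $e$ cover all even residues in both parities of $n$, so $e=0$ and $m_{G}=1$. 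I will verify this carefully by expanding $(x^{c}y)^{3}=x^{c+n}y$.

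\textbf{Substitution.} With $L_{G}=3$, $m_{G}=1$ and $\sum 1/m_{i}=\frac{1}{2n}+\frac12$, formula \eqref{formula1} gives $\sigma^{pnf}(DC_{n})=1+2n\left(2-1-\frac{1}{2n}-\frac12\right)=n$. For $n\ge2$ this genus is at least $2$, so it coincides with $\sigma^{pnf}_{hyp}$. Letting $n$ range over all integers $n\ge2$ then realizes every genus $g\ge2$ as a purely non-free genus.
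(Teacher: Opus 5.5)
Your route is the paper's: apply Theorem \ref{teo0} to $DC_{n}$. For $n$ even your data $L_{DC_{n}}=3$ and $m_{DC_{n}}=1$ (witnessed by $x\cdot y\cdot (xy)^{-1}=1$) agree with the paper's. The gap is the case $n$ odd. You classify the subgroups $\langle x^{k}y\rangle$ by the parity of $k$, but that parity is not an invariant of the subgroup. Indeed $\langle x^{k}y\rangle=\{1,x^{k}y,x^{n},x^{k+n}y\}$, and for $n$ odd, $k$ and $k+n$ have opposite parity. Concretely, $y^{-1}=y^{3}=x^{n}y$ and $x^{(1-n)/2}(x^{n}y)x^{-(1-n)/2}=xy$. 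So $\langle y\rangle$ and $\langle xy\rangle$ are conjugate, and $L_{DC_{n}}=2$ for every odd $n$, not only for $n=1$.

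Consequently, for odd $n$ your triple $(\langle x\rangle,\langle x^{2i}y\rangle,\langle x^{2j+1}y\rangle)$ is not a tuple in ${\mathcal F}$, and Theorem \ref{teo0} cannot be applied with $L=3$. The ``extra freedom'' from $l_{2},l_{3}\in\{1,3\}$ that lets you reach $e=0$ is exactly the witness of this conjugacy. Your first $m_{G}$ computation, which assumed $c,d$ keep opposite parities after inversion, fails for the same reason.

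With the correct data for $n$ odd, every product $x^{u}(x^{k}y)^{\pm1}$ lies in the coset $\langle x\rangle y$, so it has order $4$. Hence $m_{DC_{n}}=4$ and
\[
\sigma^{pnf}(DC_{n})=1+2n\left(1-\tfrac14-\tfrac{1}{2n}-\tfrac14\right)=n,
\]
with signature $(0;2n,4,4)$. Your final number is right only because the incorrect data $(L,m_{G})=(3,1)$ happen to give the same signature $(0;2n,4,4)$ as the correct data $(2,4)$. As written, the odd case is not proved.

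A smaller slip: $DC_{n}$ has a unique involution $x^{n}$, since every $x^{k}y$ has order $4$. So your stated reason for non-cyclicity is false. Use instead $yxy^{-1}=x^{-1}\neq x$, which holds because $|x|=2n\geq 4$.
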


%%%%%%%%%%%%%%
\subsubsection{\bf The metacyclic groups $G^{0}_{s}(n)$}
Let us now  consider the family of metacyclic groups of order $2n$ with presentation of the form
$$G^{0}_{s}(n)=\langle x,y: x^{n}=y^{2}=1, yxy=x^{s}\rangle,$$
where $n \geq 3$ and $2 \leq s \leq n-1$ is such that $s^{2} \equiv 1 \mod(n)$. Note that $G^{0}_{n-1}(n)={\mathbb D}_{n}$; the dihedral group of order $2n$.
The value of $\sigma^{0}_{hyp}(G^{0}_{s}(n))$ has been computed in \cite{Michael}. In \cite{Y2024,Y2026}, the 
value of $\sigma^{pnf}_{hyp}(G^{0}_{s}(n))$ has been computed for the cases $(n,s) \in \{(4m,2m-1), (2^{k-1},2^{k-2}+1) \}$, where $m \geq 2$ and $k \geq 4$ (see Section \ref{Sec:Yerika}).
The next result takes care of some cases when $2 \leq s \leq n-2$. 

\begin{theo}\label{teo3}
Let $n \geq 3$,  and $2 \leq s \leq n-2$ be such that $s^{2} \equiv 1 \mod(n)$. Let $t \in \{1,\ldots,n-1\}$ be such that $s+1=t\gcd(n,s+1)$, and $\gcd(n,t)=1$.
Then
\begin{enumerate}[leftmargin=15pt]
\item The conformal action of $G_{s}^{0}(n)$ on its minimal genus $\sigma^{0}_{hyp}(G_{s}^{0}(n))$ is topologically rigid. Moreover, an algebraic model for such an action is given on the Riemann surface with an algebraic curve model
$$S: W^{n}=U^{\gcd(n,s+1)} (U-1)^{\alpha}(U+1)^{s\alpha},$$
where $\alpha=n-\beta$, and $\beta \in \{1,\ldots,n-1\}$ satisfies that $t\beta \equiv 1 \mod(n)$. In this model, the generators $x$ and $y$ are given as follow ($\omega_{n}=e^{2 \pi i/n}$):
$$x(U,W)=\left(U,\omega_{n} W\right),\;
y(U,W)=\left(-U,Q(U)W^{s} \right),$$
$$Q(U)=\frac{(-1)^{\frac{\gcd(n,s+1)+\alpha(s+1)}{n}}}{U^{\frac{\gcd(n,s+1) (s-1)}{n}} (U+1)^{\frac{\alpha(s^{2}-1)}{n}} }.$$

\item If $n$ is odd, then $\sigma^{pnf}_{hyp}(G_{s}^{0}(n))=\sigma^{0}_{hyp}(G_{s}^{0}(n))$.

\item If $n$ is even, and every maximal cyclic subgroup is conjugate to either $\langle x \rangle$, $\langle y \rangle$, or $\langle xy\rangle$ are a maximal collection of pairwise non-conjugate maximal cyclic groups of $G^{0}_{s}(n)$, then $\sigma^{pnf}_{hyp}(G_{s}^{0}(n))=\sigma^{0}_{hyp}(G_{s}^{0}(n))$.

\end{enumerate}
\end{theo}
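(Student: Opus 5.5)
The plan is to start from the known value of $\sigma^{0}_{hyp}(G^{0}_{s}(n))$ in \cite{Michael} and its realizing signature. For $2\le s\le n-2$ with $s^2\equiv 1 \bmod n$ we expect the minimum to come from a triangle signature $(0;2,2n/\gcd(n,s+1)\cdot(\ldots),n)$. Concretely, the generating triple will be $(y,\,xy\text{-type element},\,x^{-1})$ with product one, where $|y|=2$, $|x|=n$, and the third element $x^{a}y$ has order $2n/\gcd(n,s+1)$. This follows from $(x^{a}y)^2=x^{a(1+s)}$. The hypothesis on $t$ (namely $s+1=t\gcd(n,s+1)$ with $\gcd(n,t)=1$) lets us normalize $a$ so that the relevant element is $xy$.

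For part (1), rigidity. Every generating vector $(a_1,a_2,a_3)$ of that signature must have $a_3$ generating a conjugate of $\langle x\rangle$ (the unique cyclic subgroup of order $n$ containing the needed elements), $a_1$ an involution outside $\langle x\rangle$, and $a_2$ determined by the product. We will show that the automorphism group of $G^{0}_{s}(n)$, which acts by $x\mapsto x^{u}$ with $u$ a unit and $y\mapsto x^{b}y$, together with the braid action, is transitive on such vectors. By Riemann's existence theorem this transitivity gives topological rigidity.

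For the model, the quotient $S/\langle x\rangle$ is the $U$-sphere branched over $0,1,-1,\infty$. We will take the cyclic cover $W^{n}=U^{\gcd(n,s+1)}(U-1)^{\alpha}(U+1)^{s\alpha}$ and check three things:
\begin{enumerate}
\item the exponent at $\infty$ is consistent with the branching orders via Riemann--Hurwitz;
\item the map $y$ lifts $U\mapsto -U$;
\item $y$ satisfies $y x y = x^{s}$.
\end{enumerate}
Item (2) amounts to verifying that $Q(U)W^{s}$ satisfies the curve equation, which uses $s^2\equiv 1$ and the divisibility $n\mid \gcd(n,s+1)(s-1)$, $n \mid \alpha(s^2-1)$. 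Item (3) holds because $y(\omega_n W)=\omega_n^{s}(\ldots)$. The sign constant is fixed by requiring $y^{2}=1$, and the role of $\alpha\equiv -t^{-1}$ is exactly to match the local monodromies at $\pm1$ to $x^{\alpha}$ and $x^{s\alpha}$.

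For parts (2) and (3) we will apply Theorem~\ref{teo0}, or more directly test whether the minimal action above is gpnf. In a triangle action each elliptic generator has fixed points, so every element conjugate into $\langle a_1\rangle,\langle a_2\rangle,\langle a_3\rangle$ has fixed points. The action is therefore gpnf iff every maximal cyclic subgroup is conjugate into one of these three subgroups.

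When $n$ is odd, we will argue that every element outside $\langle x\rangle$ is of the form $x^{b}y$, with $(x^{b}y)^2=x^{b(s+1)}$. Such an element lies in a cyclic group conjugate to $\langle y\rangle$ or $\langle xy\rangle$. This is a computation of conjugacy classes $x^{c}(x^{b}y)x^{-c}=x^{b+c(1-s)}y$, where the classes are indexed by $b$ modulo $\gcd(n,1-s)$, and oddness of $n$ makes the class structure collapse onto these two. When $n$ is even, this reduction is exactly the stated hypothesis of part (3), so part (3) is immediate from the same argument.

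The main obstacle is the odd-$n$ conjugacy analysis in part (2). The element-order bookkeeping with $\gcd(n,s\pm1)$ must show that no extra maximal cyclic class appears; otherwise $L_G>3$ and Theorem~\ref{teo0} would give a larger genus. For this I would use the CRT splitting $n=\gcd(n,s-1)\gcd(n,s+1)$, which holds for $n$ odd, and treat each coprime factor separately.
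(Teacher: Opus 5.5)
\textbf{Part (1), rigidity: this step rests on a false claim.} You assert that the order-$n$ entry $a_3$ must generate a conjugate of $\langle x\rangle$, ``the unique cyclic subgroup of order $n$''. You also take the relevant automorphisms to be only $x\mapsto x^{u}$, $y\mapsto x^{b}y$. Both fail when $n=2^{r}q$ with $r\geq 3$, $q$ odd and $s=1+2^{r-1}q$, which is exactly the case isolated in Lemma~\ref{lemita2}(2). There $\gcd(n,s+1)=2$, so $|x^{l}y|=2n/\gcd(n,l(s+1))=n$ for every $l$ prime to $n$. Hence $\langle x^{l}y\rangle$ is an order-$n$ cyclic subgroup not conjugate to the normal subgroup $\langle x\rangle$.

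For example, take $n=8$, $s=5$. The triple $(y,x^{-1},xy)$ is a generating vector of type $(0;2,8,8)$ whose third entry lies outside $\langle x\rangle$. Your automorphisms all preserve $\langle x\rangle$, so none of them carries $(y,x^{s}y,x^{-1})$ to it.

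The paper handles this case with the extra automorphisms $x\mapsto x^{l}y$, $y\mapsto x^{v}y$, $v\in\{0,n/2\}$, and then needs no braid moves at all. In your framework you can instead swap the two order-$n$ entries by a braid move, since the signature is $(0;2,n,n)$ here. But you must first show that $a_2,a_3$ cannot both lie outside $\langle x\rangle$. If $a_2=x^{l}y$ and $a_3=x^{m}y$, then $a_1=(a_2a_3)^{-1}$ is the involution $x^{n/2}$ of $\langle x\rangle$. This lies in $\langle x^{2}\rangle=\langle (x^{l}y)^{2}\rangle$, so $\langle a_1,a_2\rangle=\langle x^{l}y\rangle\neq G^{0}_{s}(n)$. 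Once $a_3=x^{-u}\in\langle x\rangle$, generation forces $a_1=x^{b}y$ with $b(s+1)\equiv 0\pmod{n}$. Your automorphism $x\mapsto x^{u}$, $y\mapsto x^{b}y$ then finishes.

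\textbf{Part (1), the model: $\infty$ is not a branch point.} Under $S/\langle x\rangle\to S/G^{0}_{s}(n)$ the cone points lift as follows:
\begin{itemize}
\item the order-$n$ cone point lifts to $\pm1$;
\item the order-$2n/\gcd(n,s+1)$ one lifts to $0$, now of order $n/\gcd(n,s+1)$;
\item the order-$2$ one lifts to an unramified fixed point of $\hat{y}$ at $\infty$.
\end{itemize}
So $S/\langle x\rangle$ has signature $(0;n,n,n/\gcd(n,s+1))$. The condition $\alpha t\equiv -1\pmod{n}$ exists precisely to make the exponent sum $\gcd(n,s+1)+\alpha(s+1)=\gcd(n,s+1)(1+\alpha t)$ vanish mod $n$. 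That makes $\infty$ unramified, and Riemann--Hurwitz then gives $(n-\gcd(n,s+1))/2$. With a branch point at $\infty$ you would get the wrong genus.

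Two smaller points. First, $t$ plays no role in choosing the generating vector. Second, the sign in $Q$ is forced by $y$ preserving the curve, not by $y^{2}=1$: substituting $-U$ into the right-hand side produces the factor $(-1)^{\gcd(n,s+1)+\alpha(s+1)}$.

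\textbf{Parts (2) and (3).} Part (3) is the paper's Proposition~\ref{casosimple}. For part (2) your route differs from the paper's. The paper notes that each $x^{l}y$ acts on the fiber $\pi_n^{-1}(0)$, which has odd size $\gcd(n,s+1)$, as an involution, so it fixes a point. (Its square lies in $\langle x^{\gcd(n,s+1)}\rangle$, which fixes that fiber pointwise.)

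Your CRT argument also works, but phrase it correctly. The $\gcd(n,s-1)$ conjugacy classes of elements $x^{b}y$ do not collapse. Rather, $s+1\equiv 2$ is invertible modulo the odd number $\gcd(n,s-1)$, so the odd powers $(xy)^{2k+1}=x^{1+k(s+1)}y$ meet every class. Hence every $x^{b}y$ is conjugate into $\langle xy\rangle$.

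This buys more than the parity argument. It identifies $G^{0}_{s}(n)\cong{\mathbb Z}_{\gcd(n,s-1)}\times{\mathbb D}_{\gcd(n,s+1)}$. It also shows that $\langle y\rangle$ is not maximal for odd $n$ (e.g.\ $y=(x^{10}y)^{3}$ for $n=15$, $s=4$), contrary to Lemma~\ref{lemita3}. So your argument does not depend on that lemma.
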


\begin{rema}[About the case: $n$ even]
Assume $n$ is even. Since we are assuming the existence of some $2 \leq s \leq n-2$ with $s^{2} \equiv 1 \mod(n)$, necessarily $n \geq 8$. The groups $C_{1}=\langle x \rangle$, $C_{2}=\langle y \rangle$ and $C_{3}=\langle xy\rangle$ are pairwise non-conjugate maximal cyclic subgroups of $G^{0}_{s}(n)$. 
Part (2) of Theorem \ref{teo3} asserts that, if every maximal cyclic subgroup of $G^{0}_{s}(n)$ is conjugate to one of the groups $C_{1}$, $C_{2}$ or $C_{3}$, then $\sigma^{pnf}_{hyp}(G^{0}_{s}(n))=\sigma^{0}_{hyp}(G^{0}_{s}(n))$. In the complementary cases, this equality might fail \cite{Y2024,Y2026}.  But, for these exceptional cases, 
the value of $\sigma^{pnf}_{hyp}(G^{0}_{s}(n))$ can be computed as indicated in Theorem \ref{teo0}. In Section \ref{Sec:ejemplos}, we provide some examples.
\end{rema}

%%%%%%%%%%%%%%%%%%%%%%%%%%
%%%%%%%%%%%%%%%%%%%%%%%%%%
\section{Preliminaries}
\subsection{\bf Maximal cyclic subgroups}
A {\it maximal cyclic subgroup} of a group $G$ is a cyclic group that is not a proper subgroup of another cyclic subgroup of $G$ (see, for instance, \cite{BCLP}). Our interest in maximal cyclic subgroups is the fact that (see in Lemma \ref{lemita1}, Proposition \ref{propo1}) the gpnf property of a conformal action of a group can be described in terms of its maximal cyclic subgroups.
We denote by $L_{G} \geq 1$ (some authors denote this parameter by the symbol $\eta(G)$)  the number of conjugacy classes of its maximal cyclic subgroups. 
It is known that $L_{G}=1$ if and only if $G$ is cyclic. In \cite{Putt}, von Puttkamer proved that if $L_{G}=2$, then $G$ must be solvable with a derived length at most $4$. In \cite{BCLP}, it was observed that $L_{G \times K}=L_{G} L_{K}$ when $\gcd(|G|,|K|)=1$.
(i) If $G$ is abelian, then $L_{G}$ coincides with the number of its maximal cyclic subgroups (for instance, $L_{{\mathbb Z}_{n}}=1$, $L_{{\mathbb Z}_{2}^{2}}=3$).
(ii) If $G$ is a non-cyclic abelian group, different from ${\mathbb Z}_{2}^{2}$, then $L_{G} \geq 3$.
(iii) If $G={\mathbb D}_{n}$, $n \geq 3$, then $L_{{\mathbb D}_{n}}$ is equal to $2$ for $n \geq 3$ odd, and equal to $3$ for $n \geq 4$ even.
(iv) If $G \in \{{\mathcal A}_{3}, {\mathcal A}_{5}, {\mathfrak S}_{4}\}$, then $L_{G}=3$.

%\begin{theo}\label{BCLP}
%Let $G$ be a finite group, and let $G^{-}$ be the subset of elements $x \in G$ with $\langle x \rangle$ not a maximal cyclic subgroup. Then $G/\langle G^{-}\rangle$ 
%is either (i) a $p$-group with exponent $p$, (ii) a Frobenius group whose Frobenius kernel is a $p$-group with exponent $p$ and whose Frobenius complements have order $q$ for some prime $q \neq p$, or (iii) ${\mathcal A}_{5}$.
%\end{theo}

\begin{rema}\label{desi}
Let $G$ be a finite group, different from ${\mathbb Z}_{n}$, ${\mathbb Z}_{2}^{2}$, ${\mathbb D}_{n}$, ${\mathcal A}_{4}$, ${\mathcal A}_{5}$ and ${\mathfrak S}_{4}$ (so $L_{G} \geq 2$).
Let $C_{1},\ldots,C_{L_{G}}$ be a collection of pairwise non-conjugate maximal cyclic groups such that $|C_{i}|=m_{i}$. Then it holds that 
$L_{G}-1>\sum_{i=1}^{L_{G}} \frac{1}{m_{i}}$.
\end{rema}

%%%%%%%%%%%%%%%%%
\subsection{Fuchsian Groups}
Let $S$ be a Riemann surface of negative Euler characteristic and let $G \leq {\rm Aut}(S)$. 
As a consequence of the Riemann uniformization theorem, there is a torsion-free Fuchsian group $\Gamma \cong \pi_{1}(S)$ acting on the hyperbolic plane ${\mathbb H}^{2}$ such that $S$ is conformally equivalent to 
the orbit space ${\mathbb H}^{2}/\Gamma$.
Furthermore, there is another Fuchsian group $\Lambda$, containing $\Gamma$ as a normal subgroup, such that the group $G$ can be identified with $\Lambda/\Gamma$.
Let $\pi: {\mathbb H}^{2} \to X$  be a regular holomorphic covering, with deck group $\Gamma$, and $\theta : \Lambda \to G=\Lambda/\Gamma$ be be the canonical
projection such that, for every $z \in {\mathbb H}$, it holds 
\begin{equation}\label{action}
\theta(\delta)(\pi(z))= \pi (\delta(z)).
\end{equation}

Moreover, the only elements of $\Lambda$ having fixed points in ${\mathbb H}^{2}$ are those of finite order.

%%%%%%%%%%%%%%
\subsubsection{The compact case}
In the particular case that $S$ is a closed Riemann surface of genus $g \geq2$, the group
$\Lambda$ has a presentation
\begin{equation}\label{F presentation}
\langle a_1, b_1, \ldots , a_{\gamma},b_{\gamma}, x_1, \ldots, x_r \,: \, x_1^{k_1}= \cdots =x_r^{k_r}=1= x_1 \cdots x_r [a_1, b_1] \cdots [a_\gamma,b_\gamma]\rangle,
\end{equation}
where the Riemann orbifold $S/G$ has genus $\gamma \geq 0$ and $r \geq 0$ cone points of respective orders $2 \leq k_{1}\leq \cdots\leq k_{r}$. In this case, the 
signature of $\Lambda$ is given by the tuple $(\gamma;k_1, \ldots, k_r)$. The generators $x_i$ represent elliptic elements
of \textit{periods} $k_i$.
Every finite-order element of $\Lambda$ is conjugated to a power of some of the generators $x_i$. 

The hyperbolic area of $\Lambda$ is equal to $2\pi \mu (\Lambda)$, where
\begin{equation}\label{area}
\mu (\Lambda)= \; 2\gamma-2+ \sum_{i=1}^r \left(1 - \dfrac{1}{k_i}\right).
\end{equation}

If $F$ is a finite index $N$ subgroup of $\Lambda$, then  we have the following {\it Hurwitz-Riemann} formula 
${\mu (F)} = N {\mu(\Lambda)}.$

\begin{rema}
Any abstract group with a presentation as in (\ref{F presentation}) can be realized as a Fuchsian group if and only if
(\ref{area}) is positive.
\end{rema}

%%%%%%%%%%%%%%%%
\subsection{Purely non-free actions}\label{Sec:gpnf}
If $S$ is a closed Riemann surface, and $G \leq {\rm Aut}(G)$, then $G$ acts purely non-free (or that $G$ produces a gpnf-action on $S$) if every of its elements of finite order has fixed points.  If $S$ has genus $g \geq 2$, then $G$ is finite. 

A {\it maximal cyclic subgroup} of a group is a cyclic subgroup which is not properly contained in another (necessarily finite) cyclic subgroup of the group. For instance, (i) for a cyclic group of finite order, the only maximal cyclic subgroup is itself, and (ii) if an infinite group has no finite-order non-trivial elements, then it has no maximal cyclic subgroups. 

\begin{rema}
In our definition, a maximal cyclic subgroup might not be a maximal subgroup.
\end{rema}

The importance of maximal cyclic subgroups is given in the following observation.

\begin{lemma}\label{lemita1}
Let $S$ be a Riemann surface and let $G \leq {\rm Aut}(S)$. Then 
\begin{enumerate}[leftmargin=15pt]
\item The group $G$ acts gpnf if and only if each of its maximal cyclic subgroups of finite order acts gpnf. 
\item If $S$ is a closed Riemann surface (so $G$ is finite), and $C_{1},\ldots,C_{L_{G}}$ is a maximal collection of pairwise non-conjugate maximal cyclic groups, then $G$ acts gpnf if and only if each $C_{1},\ldots,C_{L_{G}}$ acts gpnf.
\end{enumerate}
\end{lemma}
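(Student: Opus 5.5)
The plan is to reduce everything to one elementary observation: an element has a fixed point exactly when some generator of the cyclic group it generates has one. Both parts of Lemma \ref{lemita1} are then formal consequences of two facts. First, every finite-order element lies in some finite maximal cyclic subgroup. Second, having a fixed point is preserved under powers and under conjugation.

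\textbf{Part (1).} One direction is immediate. If $G$ acts gpnf, every finite-order element of $G$ has a fixed point. In particular, every element of each finite maximal cyclic subgroup has one, so each such subgroup acts gpnf.

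For the converse, let $h \in G$ be a non-trivial element of finite order.
\begin{enumerate}[leftmargin=15pt]
\item I first show that $\langle h\rangle$ lies in a finite maximal cyclic subgroup. I want to avoid relying on finiteness of $G$, since $S$ may be non-compact. The cleanest route uses the Fuchsian picture. Finite-order elements of $G$ with fixed points lift to elliptic elements of $\Lambda$. More generally, any finite cyclic group of automorphisms of a hyperbolic surface has bounded order: a cyclic group $\langle h'\rangle \supseteq \langle h\rangle$ has $h$ as a power, and we only need an ascending chain of finite cyclic groups to stabilise. I will argue this via discreteness. A strictly increasing chain of finite cyclic subgroups has union either a finite cyclic group or an infinite locally cyclic torsion group, such as a Prüfer-type group. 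The latter cannot act properly discontinuously as a subgroup of ${\rm Aut}(S)$ for $S$ hyperbolic. Indeed, its lift to $\Lambda$ would be an infinite torsion discrete subgroup of ${\rm PSL}_2(\mathbb{R})$ that is not finite. This contradicts the fact that torsion discrete subgroups of ${\rm PSL}_2(\mathbb{R})$ are finite cyclic. Alternatively, if the paper's convention intends only groups in which maximal cyclic subgroups exist, I will simply invoke that assumption. This existence step is the main obstacle; in the closed case it is trivial because $G$ is finite.
\item Once $h \in C$ for a finite maximal cyclic $C=\langle c\rangle$, I write $h=c^{k}$. Since $C$ acts gpnf, $c$ has a fixed point $p$. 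Then $h(p)=c^{k}(p)=p$.
\end{enumerate}
Indeed, gpnf for $C$ just means that its generator $c$ fixes a point, and powers inherit fixed points.

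\textbf{Part (2).} Here $G$ is finite, so the existence step is automatic: take any cyclic subgroup containing $\langle h\rangle$ of maximal order. By (1), it suffices to show that if each $C_i$ acts gpnf, then every maximal cyclic subgroup $C$ does.

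By maximality of the collection, $C=uC_iu^{-1}$ for some $u\in G$ and some $i$. If $C_i=\langle c_i\rangle$ and $c_i(p)=p$, then $uc_iu^{-1}$ generates $C$ and fixes $u(p)$. Hence $C$ acts gpnf. The converse direction is trivial, since the $C_i$ are themselves maximal cyclic subgroups.
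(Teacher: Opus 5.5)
Your part (2), and part (1) when $G$ is finite, are exactly the paper's argument: powers of $c$ inherit its fixed points, $uc_iu^{-1}$ fixes $u(p)$, and every maximal cyclic subgroup is conjugate to some $C_i$. The gap is the existence step you add for infinite $G$, and the Prüfer-group argument there is wrong. A finite-order $g\in G$ lifts to an elliptic element of $\Lambda$ only if $g$ has a fixed point on $S$. If $g$ acts freely, every lift of $g$ is hyperbolic or parabolic. Moreover, the full preimage $\theta^{-1}(H)$ of a torsion subgroup $H\leq G$ contains $\Gamma$, so it is never a torsion group. There is therefore no torsion discrete subgroup of ${\rm PSL}_2(\mathbb{R})$ to reach a contradiction with. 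Your intermediate claim that the cyclic subgroups containing $h$ have bounded order is also false for non-compact $S$.

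In fact no argument can fill this step, because part (1) as literally stated fails for infinite groups. The Prüfer group $\mathbb{Z}(p^{\infty})$ is countable, so it is a quotient of a free group $F_\infty=\pi_1(R)$ of countable rank, with $R={\mathbb C}\setminus{\mathbb Z}$ say. The associated regular cover $S\to R$ is a hyperbolic Riemann surface on which $G=\mathbb{Z}(p^{\infty})$ acts conformally and freely; this is also an instance of Greenberg's theorem quoted in the introduction. Every element of $G$ has finite order and no non-trivial element has a fixed point, so $G$ is not gpnf. Yet each cyclic subgroup $\langle 1/p^{k}\rangle$ lies properly in $\langle 1/p^{k+1}\rangle$, so $G$ has no maximal cyclic subgroups and the right-hand side of (1) holds vacuously.

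The paper's one-line proof makes the same unjustified assertion ("it belongs to a maximal cyclic group of finite order"), so you were right to flag this as the crux. The fix, however, is a hypothesis rather than a proof. Your fallback should require that every finite-order element of $G$ lie in some maximal cyclic subgroup; it is not enough that some maximal cyclic subgroups exist. This is automatic when $G$ is finite, in particular in part (2). Alternatively, part (1) can be reformulated with ``finite cyclic subgroups'' in place of ``maximal cyclic subgroups.''
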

\begin{proof}
If $G$ acts gpnf, then every element of finite order has fixed points, in particular, each generator of a maximal cyclic group of finite order. In the other direction, assume that every maximal cyclic group of finite order acts gpnf on $S$. If $\tau \in G$ has finite order, then it belongs to a maximal cyclic group of finite order, so it has fixed points. Last part follows from the fact that every maximal cyclic subgroup of $G$ is conjugate to some of the $C_{i}$'s.
\end{proof}

%%%%%%%%%%%%%%%
\subsection{Signatures of finite gpnf-actions on closed Riemann surfaces}
\begin{prop}\label{propo1}
Let $S$ be a closed Riemann surface of genus $g \geq 2$ and $G\leq {\rm Aut}(S)$ be a finite group acting gpnf. 
Let $C_{1},\ldots,C_{L_{G}}$ be a maximal collection of pairwise disjoint non-conjugate maximal cyclic subgroups.
Then the signature of $S/G$ has the form
\begin{equation}\label{firma}
(\gamma;|C_{1}|,\ldots,|C_{L_{G}}|, k_{1},\ldots,k_{s}),
\end{equation}
where (if $s \geq 1$), each $k_{i} \geq 2$ is the order of some element of $G$.
\end{prop}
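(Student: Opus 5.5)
We would prove Proposition \ref{propo1} using the Fuchsian uniformization of the previous subsection. Write $S={\mathbb H}^{2}/\Gamma$ and $G=\Lambda/\Gamma$, with $\theta:\Lambda\to G$ the canonical projection and $\Lambda$ presented as in \eqref{F presentation} with signature $(\gamma;n_{1},\ldots,n_{r})$. Two standard facts do all the work. First, since $\Gamma$ is torsion-free, $\theta$ is injective on every finite cyclic subgroup of $\Lambda$, so $\theta(x_{j})$ has order exactly $n_{j}$. Second, via \eqref{action}, an element $\tau\in G$ has a fixed point on $S$ if and only if some elliptic element of $\Lambda$ maps to $\tau$. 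Every elliptic element of $\Lambda$ is conjugate to a power of some $x_{j}$. Hence $\tau$ has fixed points if and only if $\tau$ is $G$-conjugate to an element of some $\langle\theta(x_{j})\rangle$.

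Next we apply this to the maximal cyclic subgroups. Fix $i$ and a generator $c_{i}$ of $C_{i}$. Since $G$ acts gpnf, $c_{i}$ has a fixed point, so $c_{i}=g\,\theta(x_{j})^{l}g^{-1}$ for some $j$, some $l$ and some $g\in G$. Then $C_{i}=\langle c_{i}\rangle\leq g\langle\theta(x_{j})\rangle g^{-1}$, and the latter group is cyclic. By maximality of $C_{i}$ this inclusion is an equality, so $C_{i}$ is conjugate to $\langle\theta(x_{j})\rangle$ and $n_{j}=|C_{i}|$. This defines a map $i\mapsto j(i)$ from $\{1,\ldots,L_{G}\}$ to $\{1,\ldots,r\}$.

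The map is injective. If $j(i)=j(i')$, then $C_{i}$ and $C_{i'}$ are both conjugate to $\langle\theta(x_{j})\rangle$, hence to each other, which forces $i=i'$. So among the periods there are $L_{G}$ distinct indices carrying the values $|C_{1}|,\ldots,|C_{L_{G}}|$. The remaining periods $k_{1},\ldots,k_{s}$ are the orders of the elements $\theta(x_{j})$ for the other indices $j$, so each is the order of some element of $G$. They satisfy $k\geq2$ because periods of a signature are at least $2$; equivalently, $\theta(x_{j})\neq1$ by injectivity on torsion. Reordering the periods gives the form \eqref{firma}.

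The only point needing care is the equivalence in the first paragraph, namely that fixed points of $\tau$ correspond to elliptic preimages. If $\tau(\pi(z))=\pi(z)$, then by \eqref{action} there is $\delta\in\theta^{-1}(\tau)$ with $\delta(z)=z$. This $\delta$ is elliptic or the identity, since only finite-order elements of $\Lambda$ have fixed points. The converse direction is immediate from \eqref{action}. Apart from this, the argument is routine bookkeeping.
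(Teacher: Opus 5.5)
Your proof is correct and takes the same route as the paper, whose proof is only the line ``this follows from Lemma~\ref{lemita1}''; you supply the details the paper leaves implicit: maximality forces each $C_i$ to equal a conjugate of a cone-point stabilizer $\langle\theta(x_j)\rangle$, and pairwise non-conjugacy makes the corresponding cone points distinct. One cosmetic point: your fixed-point/elliptic-preimage equivalence should be stated for $\tau\neq 1$, which is the only case you use.
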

\begin{proof}
This follows from Lemma \ref{lemita1}.
\end{proof}

\begin{rema}
Proposition \ref{propo1} states that, to obtain the value of $\sigma^{pnf}_{hyp}(G)$, one needs to minimize 
$\mu(\Lambda)$ for Fuchsian groups $\Lambda$ with signature of types as in \eqref{firma}.
In general, such a minimum occurs for $\gamma=0$, $L_{G} \geq 2$, and $s \in \{0,1\}$ with $L_{G}+s \geq 3$.
\end{rema}

%%%%%%%%%%%%%%%%%
\subsection{Minimal genus $\sigma^{0}_{hyp}({\mathbb Z}_{n})$ actions of ${\mathbb Z}_{n}$ are gpnf}\label{Sec:ciclico}
If $S$ is a Riemann surface and ${\mathbb Z}_{n} \cong G=\langle x \rangle \leq {\rm Aut}(S)$ acts gpnf, then the signature of $S/G$ is of the form $(\gamma;n,k_{1},\ldots,k_{r})$, where $k_{i}\geq 2$ are divisors of $n$. In \cite{H}, Harvey computed the value of $\sigma^{0}_{hyp}({\mathbb Z}_{n})$, for $n \geq 2$. 

\begin{theo}[\cite{H}]
Let $n=p_{1}^{\alpha_{1}}q \geq 2$, where $p_{1}$ is a the smalles prime divisor of $n$, $\alpha_{1} \geq 1$, and $\gcd(p_{1},q)=1$. Then
$$\sigma^{0}_{hyp}({\mathbb Z}_{n})=\left\{ \begin{array}{ll}
{\rm max}\left\{ 2, \left(\frac{p_{1}-1}{2}\right)\left(\frac{n}{2}-1\right)\right\}, & \mbox{$\alpha_{1}=1$ and $q\geq 2$.}\\
{\rm max}\left\{ 2, \left(\frac{p_{1}-1}{2}\right)\frac{n}{p_{1}}\right\}, & \mbox{$\alpha_{1} \geq 2$ or $\alpha_{1}=q=1$.}
\end{array}
\right.
$$
\end{theo}

If $n \in \{2,3,4,6\}$, then $\sigma^{0}_{hyp}({\mathbb Z}_{n})=\sigma^{pnf}_{hyp}({\mathbb Z}_{n})=2$. There are two topologically different actions of ${\mathbb Z}_{2}$ in genus two, one with quotient signature $(0;2,2,2,2,2,2)$ and the other with quotient signature $(1;2,2)$; both being gpnf. 
Similarly, there are two topologically different actions of ${\mathbb Z}_{6}$ on genus two, one with quotient signature $(0;2,2,3,3)$ (so it is not gpnf) and the other with quotient signature $(0;2,6,6)$ (so it is gpnf). 

For the rest of the values of $n$, the minimal actions provide quotient signature $(0;n,n,p_{1})$ or $(0;n,p_{1},\frac{n}{p_{1}})$; so 
these actions are gpnf, in particular, $\sigma^{pnf}_{hyp}({\mathbb Z}_{n})=\sigma^{0}_{hyp}({\mathbb Z}_{n})$. For some of these values of $n$, there are topologically different actions on the minimal genus, but in any of these cases the action is gpnf \cite{BGH1}).

%%%%%%%%%%%%%%%
\subsection{Minimal genus $\sigma^{0}_{hyp}({\mathbb Z}_{2}^{2})$ action of ${\mathbb Z}_{2}^{2}$ is gpnf}
The group $G={\mathbb Z}_{2}^{2}$ admits a conformal action on genus two, so $\sigma^{0}_{hyp}({\mathbb Z}_{2}^{2})=2$. This action is provided on the Riemann surfaces
$$S:W^{2}=(U^{2}-1)(U^{2}-a^{2})(U^{2}-b^{2}),$$
where $a,b \in {\mathbb C}\setminus\{0,\pm1\}$ are such that $a^{2} \neq b^{2}$. The action of $G$ is given by
$x(U,W)=(-U,W)$ and $y(U,W)=(U,-W)$. It can be seen that each $x,y$ and $xy$ acts with fixed points, so $\sigma^{pnf}_{hyp}({\mathbb Z}_{2}^{2})=2$.

%%%%%%%%%%%%%%%%%
\subsection{Construction of some gpnf-actions of finite groups}
As we already know the values of $\sigma^{pnf}_{hyp}(G)$, for $G$ either a cyclic group or ${\mathbb Z}_{2}^{2}$,  we will only need to consider finite groups $G$ different from them.
In particular, the order of $G$ is $N \geq 6$.

\begin{theo}\label{teogpnf}
Let $G$ be a finite group of order $N\geq 6$. Let $C_{1}=\langle \phi_{1}\rangle,\ldots, C_{\hat{M}}=\langle \phi_{\hat{M}}\rangle$ be a collection of (not necessarily different and it might be that some of them are not maximal) cyclic subgroups of $G$ satisfying the following properties:
\begin{enumerate}[leftmargin=15pt]
\item $G=\langle \phi_{1},\ldots,\phi_{\hat{M}}\rangle$, and
\item every maximal cyclic subgroup of $G$ is conjugate to some $C_{i}$.
\end{enumerate}

Let $m_{i}$ be the order of $C_{i}$, and 
$$m={\rm Min}\left\{|\phi_{1}^{l_{1}} \cdots \phi_{\hat{M}}^{l_{\hat{M}}}|: 1 \leq l_{i} \leq m_{i}-1, \; \gcd(m_{i},l_{i})=1, \;1 \leq i \leq \hat{M}    \right\}.$$
Then there is a gpnf-action of $G$ on a closed Riemann surface of genus
\begin{equation}\label{eq1}
g_{G}(C_{1},\ldots,C_{\hat{M}})=1+\frac{N}{2}\left(\hat{M}-1-\frac{1}{m}-\sum_{i=1}^{\hat{M}}\frac{1}{m_{i}} \right).
\end{equation}

The signature of the quotient $S/G$ is $(0;m_{1},\ldots,m_{\hat{M}},m)$ (if $m \geq 2$) or $(0;m_{1},\ldots,m_{\hat{M}})$ (if $m=1$).

If, moreover, 
\begin{equation}\label{eq2}
\hat{M}- \frac{1}{m} - \sum_{i=1}^{\hat{M}} \frac{1}{m_{i}} \geq \frac{N+4}{N},
\end{equation}
then
\begin{equation}\label{eq3}
\sigma^{pnf}_{hyp}(G) \leq  g_{G}(C_{1},\ldots,C_{\hat{M}}).
\end{equation}
\end{theo}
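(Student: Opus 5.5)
The plan is to realize $G$ as a quotient of a genus-zero Fuchsian group via Riemann's existence theorem, choosing the generating vector so that every maximal cyclic subgroup is conjugate into a cone-point stabilizer. Fix exponents $l_{1},\ldots,l_{\hat{M}}$ with $\gcd(m_{i},l_{i})=1$ attaining the minimum $m$. Put $\psi_{i}=\phi_{i}^{l_{i}}$, which still generates $C_{i}$ and has order $m_{i}$, and set $\psi_{\hat{M}+1}=(\psi_{1}\cdots\psi_{\hat{M}})^{-1}$, which has order $m$.

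Consider the abstract group $\Lambda$ with signature $(0;m_{1},\ldots,m_{\hat{M}},m)$, dropping the last period if $m=1$, generated by elliptic elements $x_{1},\ldots,x_{\hat{M}+1}$ with $x_{1}\cdots x_{\hat{M}+1}=1$. Define $\theta:\Lambda\to G$ by $x_{i}\mapsto\psi_{i}$. The product relation holds by the choice of $\psi_{\hat{M}+1}$, and $\theta$ is surjective by hypothesis (1), since $\langle\psi_{i}\rangle=C_{i}$. Since each $x_{i}$ is sent to an element of exactly the same order, $\ker\theta$ is torsion-free: every finite-order element of $\Lambda$ is conjugate to a power of some $x_{i}$, and $\theta$ is injective on $\langle x_{i}\rangle$.

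Before this we must check that $\Lambda$ is realizable as a Fuchsian group, i.e. $\mu(\Lambda)>0$ by the remark after (\ref{area}). Here
$$\mu(\Lambda)=\hat{M}-1-\frac{1}{m}-\sum_{i}\frac{1}{m_{i}},$$
which is the same expression whether or not $m=1$. Riemann--Hurwitz then gives genus $1+\frac{N}{2}\mu(\Lambda)$, which is exactly (\ref{eq1}), with the quotient signature as stated. Positivity of $\mu$ is the step I expect to need care. I would argue it from $N\geq 6$ together with the fact that $G$ is noncyclic or has order at least $6$, excluding the spherical and Euclidean cases by noting that the genus formula must produce a nonnegative integer. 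Alternatively, the intended reading may be that realizability is part of the hypothesis, and then (\ref{eq2}) simply secures it in the bound step. I would handle this by observing that $\mu\le 0$ would force a spherical or Euclidean triangle group, and treat those cases separately.

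For the gpnf property, by Lemma \ref{lemita1} it suffices that each maximal cyclic subgroup has fixed points. By hypothesis (2) every maximal cyclic subgroup is conjugate to some $C_{i}=\theta(\langle x_{i}\rangle)$. Now $C_{i}$ fixes the image in $S=\mathbb{H}^{2}/\ker\theta$ of the fixed point of $x_{i}$, using (\ref{action}). Conjugates of $C_{i}$ then fix the translated point.

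Finally, condition (\ref{eq2}) is equivalent to $\frac{N}{2}\mu(\Lambda)\geq 2-1$ up to rearrangement, i.e. $\mu\geq 4/N$ in the form written, which yields $g_{G}\geq 2$ after a routine check. Hence the action lies in genus at least two and (\ref{eq3}) follows from the definition of $\sigma^{pnf}_{hyp}$.
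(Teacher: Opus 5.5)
\textbf{There is a genuine gap: the proposal proves the unconditional first assertion only when $\mu(\Lambda)>0$.} Your construction is otherwise the paper's: the genus-zero orbifold group with periods $m_1,\dots,m_{\hat M},m$, the order-preserving epimorphism $x_i\mapsto \phi_i^{l_i}$, a torsion-free kernel, Lemma \ref{lemita1} for the gpnf property, and Riemann--Hurwitz for the genus. Under (\ref{eq2}) it is complete.

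The problem is that you realize $\Lambda$ only as a Fuchsian group, which needs $\mu(\Lambda)>0$. That does not follow from the hypotheses. Three examples satisfy (1), (2) and $N\geq 6$:
\begin{itemize}
\item $G=\mathbb{Z}_6$ with $\hat M=1$ gives signature $(0;6,6)$, $\mu=-1/3$ and $g_G=0$.
\item $G=\mathbb{D}_3$ with $C_1=\langle x\rangle$, $C_2=\langle y\rangle$ gives $(0;3,2,2)$, $\mu=-1/3$ and $g_G=0$.
\item $G=\mathbb{Z}_6$ with $C_1=\mathbb{Z}_6$ and $C_2$ its subgroup of order $2$ gives $(0;6,2,3)$, $\mu=0$ and $g_G=1$.
\end{itemize}
In each case the theorem still asserts a gpnf-action in genus $g_G$. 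Your proposed justifications of positivity do not work. $N\geq 6$ does not force hyperbolicity. ``The genus formula must produce a nonnegative integer'' does not exclude $g_G\in\{0,1\}$. The non-hyperbolic cases are also not only triangle signatures; $(0;n,n)$ occurs. So ``treat those cases separately'' is exactly the step that is missing.

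\textbf{How the paper closes it.} It takes the universal orbifold cover $\widetilde X$ to be $\widehat{\mathbb C}$, $\mathbb C$ or $\mathbb H^2$ according to the sign of $\mu$. It then realizes $\Lambda$ as a spherical, Euclidean or Fuchsian group and runs the same argument: $\ker\theta$ is torsion-free, fixed points come from the cone points, and Riemann--Hurwitz gives the genus. In the spherical case, torsion-freeness forces $\ker\theta=1$ and $S=\widehat{\mathbb C}$.

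Alternatively, use the topological form of Riemann's existence theorem, which you mention at the start but do not use. The generating tuple $(\psi_1,\dots,\psi_{\hat M+1})$ with product $1$ defines a $G$-Galois branched cover of $\widehat{\mathbb C}$ with local monodromies $\psi_i$, and this needs no sign condition.

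\textbf{A minor slip.} (\ref{eq2}) is equivalent to $\mu\geq 4/N$, that is $g_G\geq 3$, not to $\frac N2\mu\geq 1$. Since only $g_G\geq 2$ is needed for (\ref{eq3}), this is harmless.
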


\begin{proof}
May assume $\phi_{i}$ are such that $m=m_{\hat{M}+1}=|\phi_{1}\cdots \phi_{\hat{M}}|$. By our hypothesis, $G=\langle \phi_{1},\ldots,\phi_{\hat{M}}\rangle$. Set $d=-\hat{M}-1-\sum_{i=1}^{\hat{M}+1}\frac{1}{m_{i}}$. Set $\widetilde{X}$ equal to (i) $\widehat{\mathbb C}$ (if $d<0$), or (ii) ${\mathbb C}$ (if $d=0$), or  (iii) 
${\mathbb H}^{2}$ (if $d>0$).

If $m=m_{\hat{M}+1}=1$, let
$$\Lambda=\langle x_{1},\ldots,x_{\hat{M}}: x_{1}^{m_{1}}=\cdots=x_{\hat{M}}^{m_{\hat{M}}}=x_{1}\cdots x_{\hat{M}}=1\rangle \leq {\rm Aut}(\widetilde{X}),$$
and $$\theta:\Lambda \to G: x_{i} \mapsto \phi_{i}; i=1,\ldots,\hat{M}.$$

If $m=m_{\hat{M}+1}\geq 2$, let
$$\Lambda=\langle x_{1},\ldots,x_{\hat{M}+1}: x_{1}^{m_{1}}=\cdots=x_{\hat{M}+1}^{m_{\hat{M}+1}}=x_{1}\cdots x_{\hat{M}+1}=1\rangle \leq {\rm Aut}(\widetilde{X}),$$
and $$\theta:\Lambda \to G: x_{i} \mapsto \phi_{i}, i=1,\ldots,\hat{M}, \; x_{\hat{M}+1} \mapsto (\phi_{1}\cdots\phi_{\hat{M}})^{-1}.$$

In each of the above cases, $\Gamma=\ker(\theta)$ is a torsion-free subgroup of $\Lambda$ and $S=\widetilde{X}/\Gamma$ is a closed Riemann surface admitting $G$ as a group of conformal automorphisms. As a consequence of Lemma \ref{lemita1}, the conformal action of $G$ is gpnf. As a consequence of the Riemann-Hurwitz formula, the genus of $S$ is $g_{G}(C_{1},\ldots, C_{\hat{M}})$. Equation \eqref{eq2} is equivalent to $g_{G}(C_{1},\ldots, C_{\hat{M}}) \geq 2$.
\end{proof}

\begin{rema}
Let $M_{G} \geq L_{G}$ be the number of maximal cyclic subgroups of a finite group $G$. Then: (1) $M_{G}=1$ if and only if $G$ is cyclic; in which case, $\sigma^{pnf}_{hyp}({\mathbb Z}_{n})=\sigma^{0}_{hyp}({\mathbb Z}_{n})$. (2)  If $G$ is not cyclic, then $M_{G} \geq 3$. (3) $M_{G}=3$ if and only if $G={\mathbb Z}_{2}^{2}$, in which case, $\sigma^{pnf}_{hyp}({\mathbb Z}_{2}^{2})=2$. (4) $M_{G} \geq 4$ if and only if $g_{G} \geq 2$. In particular, $\sigma^{pnf}_{hyp}(G) \leq g_{G}$.
\end{rema}

\begin{coro}[\cite{BGH1}]\label{teoabeliano}
Let $G$ be a group, different from a cyclic group or ${\mathbb Z}_{2}^{2}$, so of order $N \geq 6$. Let $C_{1}=\langle \phi_{1}\rangle,\ldots, C_{M_{G}}=\langle \phi_{M_{G}}\rangle$ be all of the maximal cyclic subgroups of $G$. Let $m_{i}$ be the order of $C_{i}$, and  
$m={\rm Min}\left\{|\phi_{1}^{l_{1}} \cdots \phi_{M_{G}}^{l_{M_{G}}}|: 1 \leq l_{i} \leq m_{i}-1, \; \gcd(m_{i},l_{i})=1, \;1 \leq i \leq M_{G}    \right\}.$
Then
\begin{enumerate}[leftmargin=15pt]
\item $$\sigma^{pnf}_{hyp}(G) \leq g_{G}=1+\frac{N}{2}\left(M_{G}-1-\frac{1}{m}-\sum_{i=1}^{M_{G}}\frac{1}{m_{i}} \right).$$

\item If, moreover,  $G$ is abelian, then $\sigma^{pnf}_{hyp}(G) = g_{G}$. 
\end{enumerate}
\end{coro}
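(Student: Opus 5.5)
Part (1) follows directly from Theorem \ref{teogpnf}, taking $\hat{M}=M_{G}$ and $C_{1},\ldots,C_{M_{G}}$ to be all the maximal cyclic subgroups of $G$. Both hypotheses of that theorem hold. Every element lies in some maximal cyclic subgroup, so $G=\langle \phi_{1},\ldots,\phi_{M_{G}}\rangle$. Trivially, every maximal cyclic subgroup is conjugate to (indeed equal to) some $C_{i}$. Theorem \ref{teogpnf} therefore produces a gpnf-action on a surface of genus $g_{G}$. To conclude $\sigma^{pnf}_{hyp}(G)\le g_{G}$, it remains to check inequality \eqref{eq2}, i.e. $g_{G}\ge 2$. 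Here I will use the preceding remark: if $G$ is neither cyclic nor ${\mathbb Z}_{2}^{2}$, then $M_{G}\ge 4$.

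To verify $g_G\ge2$ when $M_G\ge4$, I first bound $\sum 1/m_i$. Each $m_i\ge2$. If all $m_i=2$, then every non-trivial element generates a maximal cyclic subgroup of order $2$, so $G$ is an elementary abelian $2$-group of rank at least $3$. In that case $M_G\ge7$, and $m=1$ or $m=2$. In either case
\[
2g_G-2\ \ge\ 8\left(7-1-1-\tfrac72\right)>2.
\]
Otherwise some $m_i\ge3$, and I bound
\[
M_G-1-\tfrac1m-\sum\tfrac1{m_i}\ \ge\ M_G-2-\tfrac{M_G-1}{2}-\tfrac13 .
\]
For $M_G\ge4$ this is at least $1/6$, and I then need $N/6\ge 2$, i.e. $N\ge 12$. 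The small-order groups (non-cyclic, not ${\mathbb Z}_2^2$, $N<12$) are ${\mathbb D}_3,{\mathbb D}_4,{\mathbb Z}_2^3,Q_8,{\mathbb D}_5$, and I check them directly. I expect this bookkeeping to be the only fiddly point, and I would simply rely on the remark's assertion (4).

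For part (2), let $G$ be abelian and let $G$ act gpnf on a surface of genus $g\ge 2$ with quotient signature $(\gamma;k_1,\ldots,k_r)$. Since $G$ is abelian, conjugation is trivial. So by Lemma \ref{lemita1} and Proposition \ref{propo1}, each of the $M_G$ maximal cyclic subgroups must itself contain the image of (a conjugate of, hence of) some elliptic generator $x_j$. Maximality forces $\theta(x_j)$ to generate $C_i$. Distinct $C_i$ need distinct generators, so $r\ge M_G$, with $M_G$ of the periods equal to $m_1,\ldots,m_{M_G}$.

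Next I minimise $\mu(\Lambda)$. If $\gamma\ge1$, then $\mu\ge 2+\sum(1-1/m_i)$, which exceeds the $\gamma=0$ candidate. So assume $\gamma=0$. If $r=M_G$, then the product of the images of the generators is trivial, so the relation forces the ``extra'' order to be $m=1$; this is covered by the definition of $m$, since $m$ is a minimum over products of generators. If $r\ge M_G+1$, then every additional period contributes at least $1-1/k\ge 1/2$. Moreover, the relation $x_1\cdots x_r=1$ gives
\[
\theta(x_{M_G+1}\cdots x_r)=(\phi_1^{l_1}\cdots\phi_{M_G}^{l_{M_G}})^{-1}
\]
for suitable units $l_i$. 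When there is exactly one extra period, this forces that period to have order at least $m$, so $\mu\ge \mu$ of $(0;m_1,\ldots,m_{M_G},m)$. When there are two or more extra periods, their contribution is at least $1\ge 1-1/m$.

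The main obstacle is ensuring that the elements $\theta(x_i)$ attached to the $C_i$ can be taken to be generators $\phi_i^{l_i}$ with $l_i$ units, matching the definition of $m$. I handle this by choosing, for each $C_i$, one $x_{j}$ whose image generates $C_i$, and reordering. Reordering is harmless because $G$ is abelian, so the product is order-independent. Hence $\mu(\Lambda)\ge M_G-1-1/m-\sum 1/m_i$, and Riemann–Hurwitz gives $g\ge g_G$, which together with part (1) yields equality.
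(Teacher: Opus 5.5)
Your route is the paper's: part (1) is Theorem~\ref{teogpnf} with $\hat M=M_G$ plus the Remark preceding the corollary, and part (2) is the minimisation over signatures forced by Proposition~\ref{propo1}, as in the proof of Theorem~\ref{teo0}. Your part (2) is sound. The gap is in part (1), at the step ``if $G$ is neither cyclic nor ${\mathbb Z}_2^2$, then $M_G\ge 4$''. This is false, and the Remark you invoke is wrong on exactly this point.

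\textbf{Counterexamples.} $Q_8$ has only the three maximal cyclic subgroups $\langle i\rangle,\langle j\rangle,\langle k\rangle$. Likewise ${\mathbb Z}_2\times{\mathbb Z}_6$ has only the three subgroups ${\mathbb Z}_3\times\langle v\rangle$, with $v$ an involution of ${\mathbb Z}_2^2$. The same happens for ${\mathbb Z}_2\times{\mathbb Z}_{2d}$ and ${\mathbb Z}_d\times Q_8$ with $d$ odd. For $M_G=3$ your bound $M_G-2-\frac{M_G-1}{2}-\frac13=-\frac13$ gives nothing. Since ${\mathbb Z}_2\times{\mathbb Z}_6$ has order $12$ and is not among your small cases, no branch of your argument shows $g_G\ge 2$ for it.

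\textbf{Repair.} The statement survives. Suppose $G=C_1\cup C_2\cup C_3$ with the $C_i$ maximal cyclic. The cover is irredundant, since a generator of $C_i$ lies in no other $C_j$, so Scorza's argument applies.
\begin{enumerate}
\item Take $d\in C_1\cap C_2$ and $c\in C_3\setminus(C_1\cup C_2)$. Then $dc$ lies in neither $C_1$ nor $C_2$, so $dc\in C_3$ and hence $d\in C_3$. Thus all pairwise intersections equal $D=C_1\cap C_2\cap C_3$.
\item The inclusion $(C_1\setminus D)(C_2\setminus D)\subseteq C_3\setminus D$ and its cyclic permutations make the three sets $C_i\setminus D$ equinumerous.
\item With $k=[C_i:D]$ this gives $N=(3k-2)|D|$. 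Since $k|D|=|C_i|$ divides $N$, we get $k\mid 2$, so $k=2$.
\item Hence $m_i=N/2$ and $N=4|D|\ge 8$, because $G\neq{\mathbb Z}_2^2$.
\end{enumerate}
Therefore $2g_G-2\ge N(1-6/N)=N-6>0$.

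\textbf{Smaller points.}
\begin{itemize}
\item $g_G$ is the genus of the surface produced by Theorem~\ref{teogpnf}, so it is an integer. Any strictly positive lower bound for $2g_G-2$ already gives $g_G\ge 2$. Your threshold $N\ge 12$ and the direct check of small groups are therefore unnecessary. That list also missed ${\mathbb Z}_2\times{\mathbb Z}_4$ and ${\mathbb Z}_3^2$.
\item In part (2), for $\gamma\ge1$ the bound is $\mu\ge\sum_i(1-1/m_i)$, not $2+\sum_i(1-1/m_i)$. The correct bound still exceeds the candidate value $\sum_i(1-1/m_i)-1-1/m$, so the conclusion stands.
\item The inclusion you need is $C_i\subseteq\langle\theta(x_j)\rangle$, not the reverse. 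A generator of $C_i$ has a fixed point, so, $G$ being abelian, it is a power of some $\theta(x_j)$. That inclusion is what lets maximality force $\langle\theta(x_j)\rangle=C_i$.
\end{itemize}
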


The above result (part (2)) was used in \cite{BGH1} to obtain the value of $\sigma^{pnf}_{hyp}(G)$ for some non-cyclic abelian groups. 

\begin{coro}[\cite{BGH1}]
Let $G={\mathbb Z}_{p}^{m}$, where $p$ is a prime integer, and $m \geq 2$ (if $p=2$ we require $m \geq 3$). Then $\sigma_{hyp}^{pnf}(G)={p^{m-1}(p^m-2p-1)}/{2}+1$.
\end{coro}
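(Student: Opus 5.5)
We would prove the formula for $G=\mathbb{Z}_p^m$ by applying part (2) of Corollary \ref{teoabeliano}, so that $\sigma^{pnf}_{hyp}(G)=g_G$. The work then reduces to computing the three quantities $M_G$, the orders $m_i$, and the minimum $m$ for this group.

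\emph{Counting the maximal cyclic subgroups.} Every nontrivial element of $G$ has order $p$, so the maximal cyclic subgroups are exactly the subgroups of order $p$, and each has $m_i=p$. Counting nonidentity elements and dividing by $p-1$ gives
$$M_G=\frac{p^m-1}{p-1}.$$
The hypothesis ($m\geq 2$, and $m\geq 3$ when $p=2$) excludes the cyclic group and $\mathbb{Z}_2^2$, and gives $M_G\geq 4$.

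\emph{Computing $m$.} Here $m$ is the minimal order of a product $\phi_1^{l_1}\cdots\phi_{M_G}^{l_{M_G}}$ with each $l_i$ a unit mod $p$, so it is $1$ or $p$. We claim $m=1$, and we need a choice of nonzero coefficients $l_i\in\mathbb{F}_p^*$ making $\sum l_i v_i=0$, where the $v_i$ are representatives of the projective points of $\mathbb{F}_p^m$.

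\begin{enumerate}[leftmargin=15pt]
\item For $p$ odd, we take the sum of all nonzero vectors of $\mathbb{F}_p^m$, which vanishes since $v$ and $-v$ cancel. Grouping these vectors by line, each line contributes $\left(\sum_{c\in\mathbb{F}_p^*}c\right)v_i$. This coefficient is $0$, not a unit, so the naive choice fails and we adjust instead. Inside any line the coefficients are free units, and we need $\sum l_i v_i=0$ using all $M_G$ lines. We fix a basis $e_1,\ldots,e_m$ and pair the lines cleverly: for instance, lines spanned by $e_1+w$ and $w$ for $w$ ranging appropriately allow cancellations. The simplest approach is to choose $l_i=1$ and check that the sum of one representative per line can be made $0$ by normalizing each representative to have its first nonzero coordinate equal to $1$, then fixing things up with scalars. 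This is the delicate point. A cleaner route is induction on $m$: the lines of $\mathbb{F}_p^m$ are the lines of the hyperplane $H=\{x_m=0\}$ together with the $p^{m-1}$ lines spanned by $(w,1)$, $w\in\mathbb{F}_p^{m-1}$. The sum $\sum_w (w,1)$ equals $(0,p^{m-1})=0$, since $\sum_w w=0$ for $p$ odd or for $m-1\geq 2$. By induction the lines in $H$ admit a zero combination with unit coefficients. The base case $m=1$ fails, so we start the induction at $m=2$ and verify directly: for $p$ odd, $(1,0)+(0,1)+\sum_{a\neq0}(a,1)\cdot c_a=0$ is solvable with units.
\item For $p=2$ the coefficients are forced to equal $1$, and the sum of all nonzero vectors of $\mathbb{F}_2^m$ is $0$ for $m\geq 2$.
\end{enumerate}

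\emph{Substituting.} With $m=1$ the formula gives
$$g_G=1+\frac{p^m}{2}\left(\frac{p^m-1}{p-1}-2-\frac{p^m-1}{p-1}\cdot\frac{1}{p}\right)=1+\frac{p^m}{2}\left(\frac{p^m-1}{p}-2\right).$$
This simplifies to $1+\frac{p^{m-1}(p^m-2p-1)}{2}$, as claimed.

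\emph{Main obstacle.} We expect the existence of the vanishing unit combination, i.e. the proof that $m=1$, to be the step needing care, in particular the $m=2$ base case for odd $p$. There we would take $l=1$ on $(1,0)$ and on $(0,1)$, and choose $c_a$ on the lines $(a,1)$ for $a\neq 0$ so that $\sum_a a c_a=-1$ and $\sum_a c_a=-1$. This system is solvable with all $c_a\in\mathbb{F}_p^*$ when $p\geq 3$. For $p=3$ we check directly: $c_1=1$, $c_2=1$ gives $\sum c_a=2=-1$ and $\sum a c_a=3=0$, which fails. So we adjust the coefficients on $(1,0)$ and $(0,1)$ instead of fixing them at $1$, solving for units by hand.
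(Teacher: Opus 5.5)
Your route is the one the paper intends. The paper gives no separate argument: it presents the formula as a consequence of part (2) of Corollary \ref{teoabeliano}. Your counts $M_G=(p^m-1)/(p-1)$ and $m_i=p$ are right, and so is your substitution assuming $m=1$.

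\textbf{The gap.} The step you flagged, $m=1$, is not merely unproved: it is false for $(p,m)=(3,2)$, which the statement includes. In $\mathbb{Z}_3^2$ there are four subgroups of order $3$, and no choice of one generator from each has trivial product. To see this, write the group additively and let $v_1,v_2$ generate two of the subgroups. A generator of the third is $v_3=\pm v_1\pm v_2$. Then $v_4=-(v_1+v_2+v_3)$ is $0$ or lies in $\langle v_1\rangle$, $\langle v_2\rangle$ or $\langle v_3\rangle$. So for $p=3$ your $m=2$ system has no unit solution, and adjusting the coefficients on $(1,0)$ and $(0,1)$ by hand cannot work.

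\textbf{Consequence for the statement.} In this case $m=3$, and Corollary \ref{teoabeliano} gives $g_G=1+\frac{9}{2}\left(4-1-\frac13-\frac43\right)=7$. This is realized with signature $(0;3,3,3,3,3)$, using the generators $(1,0),(0,1),(1,1),(1,2),(0,2)$. The only signature with smaller genus that could carry a gpnf-action is $(0;3,3,3,3)$, and the argument above excludes it. Hence $\sigma^{pnf}_{hyp}(\mathbb{Z}_3^2)=7$, while the stated formula gives $4$. The statement needs the exception $(p,m)\neq(3,2)$, and no proof can cover that case.

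\textbf{Repairs for the other cases.} The conclusion $m=1$ does hold elsewhere, but your induction must be fixed.
\begin{enumerate}
\item For $p=3$ there is no valid base case at $m=2$, so you must verify $m=3$ directly. In $\mathbb{F}_3^3$, give coefficient $2$ to the affine lines $(w,1)$ with $w\in\{(0,1),(1,2),(2,2)\}$ and coefficient $1$ to the other six; their sum is $(0,2,0)$. Give coefficient $1$ to $(1,0,0),(0,1,0),(1,1,0),(1,2,0)$; their sum is $(0,1,0)$. The total is zero. After this, your inductive step with all affine coefficients equal to $1$ works for $m\geq 4$.
\item For $p\geq 5$ and $m=2$, you assert solvability without proof; make it explicit. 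For example, $2(1,0)+3(0,1)-(1,1)+\sum_{a=2}^{p-1}(a,1)=0$, using $\sum_{a=2}^{p-1}a\equiv -1 \pmod p$. All coefficients are units precisely because $p\geq 5$.
\item Your treatment of $p=2$ is fine as written.
\end{enumerate}
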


In \cite{BGH2}, it was observed that some particular (triangular) conformal actions cannot be gpnf.

\begin{theo}[\cite{BGH2}]\label{grupotriangular}
Let $S$ be a closed Riemann surface of genus $g \geq 2$. If $G \leq {\rm Aut}(S)$ is such that $S/G$ has signature $(0;2,3,n)$ for some $n \geq 7$, then the action of $G$ is non-gpnf.
\end{theo}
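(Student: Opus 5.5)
The plan is to argue by contradiction from the structure of signatures of gpnf-actions in Proposition \ref{propo1}. Suppose $G$ acts gpnf on $S$ with $S/G$ of signature $(0;2,3,n)$, $n \geq 7$. Let $\Lambda=\langle x_{1},x_{2},x_{3}: x_{1}^{2}=x_{2}^{3}=x_{3}^{n}=x_{1}x_{2}x_{3}=1\rangle$ be the triangle group and $\theta:\Lambda \to G$ the surface-kernel epimorphism. Put $a=\theta(x_{1})$, $b=\theta(x_{2})$ and $c=\theta(x_{3})=(ab)^{-1}$, of orders $2$, $3$ and $n$.

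The first step is a reduction to a purely group-theoretic statement. Since every finite-order element of $\Lambda$ is conjugate to a power of some $x_{i}$, the elements of $G$ with fixed points are exactly the conjugates of powers of $a$, $b$, $c$. Hence, by Lemma \ref{lemita1}, the gpnf hypothesis says that
$$G=\bigcup_{h \in G} h\bigl(\langle a\rangle \cup \langle b\rangle \cup \langle c\rangle\bigr)h^{-1}.$$
Equivalently, by Proposition \ref{propo1}, $L_{G}\leq 3$ and every maximal cyclic subgroup of $G$ is conjugate to one of $\langle a\rangle$, $\langle b\rangle$, $\langle c\rangle$. Note that $G$ is non-abelian: an abelian quotient of $\Lambda$ has order dividing $\gcd(6,n)$-type data, which is incompatible with $n\geq 7$ and $g \geq 2$. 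So $G$ is not cyclic and $L_{G}\geq 2$.

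The second step is to contradict this covering property. I would try two routes.

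The first route is counting. Write $N_{1},N_{2},N_{3}$ for the normalizers of $\langle a\rangle,\langle b\rangle,\langle c\rangle$. The union above has at most
$$1+\frac{|G|}{|N_{1}|}+\frac{2|G|}{|N_{2}|}+\frac{(n-1)|G|}{|N_{3}|}$$
elements. To reach a contradiction one needs lower bounds on the $|N_{i}|$ that are stronger than the trivial ones. I would get these from the geometry of the fixed points: a cone point of order $k$ lifts to $|G|/k$ fixed points, and, via the Riemann--Hurwitz formula $|G|=12n(g-1)/(n-6)$, one can compare the number of fixed points of each element with its centralizer order.

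The second route is more direct: exhibit an explicit element, such as $w=ab^{-1}$ or the commutator $[a,b]=aba b^{-1}$, that cannot be conjugate into $\langle a\rangle,\langle b\rangle,\langle c\rangle$. Its preimage in $\Lambda$ is hyperbolic, so the question is whether $\theta$ can send it to an elliptic class. If it could, then $\theta(x_{1}x_{2}^{-1})$ would be conjugate to some $c^{j}$, $b^{\pm1}$ or $a$. Each such case imposes an extra relation on $G$, making it a quotient of a smaller group generated by $a$ and $b$. I would show that this quotient is either finite of spherical or Euclidean type (dihedral, $\mathcal{A}_{4}$, $\mathfrak{S}_{4}$, $\mathcal{A}_{5}$, or a small abelian group), or collapses $c$ to an order smaller than $n$. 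Either outcome contradicts $|c|=n\geq 7$ together with $g\geq 2$.

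I expect this last case analysis to be the main obstacle. It requires controlling an arbitrary finite quotient of $\Delta(2,3,n)$, where extra relations can make $ab^{-1}$ elliptic in unexpected ways. I would therefore combine it with the counting bound, using the counting to exclude all but finitely many configurations and the explicit relations to rule out the rest.
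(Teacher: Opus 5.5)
Your reduction (gpnf holds iff $G$ is the union of the conjugates of $\langle a\rangle$, $\langle b\rangle$, $\langle c\rangle$) and your remark that $G$ is non-abelian are fine. Nothing after that is an argument, though, and both routes you sketch fail. The paper gives no proof of this statement; it quotes it from \cite{BGH2}, so the issue is only that your text does not yet prove it.

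In Route 2 your first candidate is useless. Since $a^{2}=1$ and $c=(ab)^{-1}$, one has $ab^{-1}=aca^{-1}$; already in $\Lambda$, $x_{1}x_{2}^{-1}=x_{1}x_{3}x_{1}^{-1}$. So it is elliptic and always has fixed points. The commutator does lift to a hyperbolic element (its trace is $4\cos^{2}(\pi/n)-1>2$), but that constrains nothing in $G$. The condition ``$[a,b]$ is conjugate to $c^{j}$, $b^{\pm1}$ or $a$'' involves an unknown conjugator, so it is not a relator you can add to $\Delta(2,3,n)$. The only usable consequence is that $|[a,b]|$ divides $2$, $3$ or $n$.

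The hoped-for dichotomy (spherical or Euclidean type, or $c$ collapses) also gives no contradiction. Take $a$ to be the reflection in a side of an equilateral triangle and $b$ the rotation of order $3$ about its centre. They generate a plane crystallographic group $E$ of type $p31m$, in which the normal closure of $a$ is the $(3,3,3)$ reflection group. In $E$, $[a,b]$ is a rotation of order $3$, and $ab$ is a glide reflection whose square is a primitive translation. With $T$ the translation lattice, $G_{k}=E/kT$ has order $6k^{2}$, and the images of $a,b,ab$ have orders $2,3,2k$. So $G_k$ gives a genuine $(0;2,3,2k)$-action, of Euclidean type, with $2k\geq 7$. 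For $k$ prime to $3$, $[a,b]$ is even conjugate to $b^{\pm1}$ in $G_{k}$, so no condition on that word can help.

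The same family shows Route 1 cannot be rescued by better normalizer estimates. For odd $k\geq 5$ one checks that $N_{G_{k}}(\langle ab\rangle)=\langle ab\rangle$ and $|N_{G_{k}}(\langle b\rangle)|\leq 18$. Hence your union bound is at least $1+|G_{k}|\bigl(1-\frac{1}{2k}+\frac{1}{9}\bigr)>|G_{k}|$. The fixed-point counting you propose only reproduces the identity $\sum_{g\neq 1}|{\rm Fix}(g)|=|G|\bigl(\frac{13}{6}-\frac{1}{n}\bigr)$, which holds for every $(2,3,n)$-action, gpnf or not.

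Yet $G_{k}$ is not gpnf for $k\geq 3$, for a reason invisible to both routes. Consider the abelian normal subgroup $T/kT\cong\mathbb{Z}_{k}^{2}$. Its elements that have fixed points all lie in the three cyclic subgroups generated by the point-group images of $(ab)^{2}$. Together these contain at most $3k-2<k^{2}$ elements.

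So the decisive step is missing. You need a structural argument that rules out every finite $(2,3,n)$-quotient, not only those in which some short word has the wrong order. For example, you could control how conjugates of $\langle c\rangle$ overlap inside normal subgroups, or pass to normal subgroups and quotient actions, both of which inherit the gpnf property.
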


\begin{rema}[Strategy to obtain Theorem \ref{teo0}]\label{estrategia}
Assume $G$ is a finite group that is different from cyclic, dihedral, ${\mathcal A}_{4}$, ${\mathcal A}_{5}$, ${\mathfrak S}_{4}$; so $L_{G} \geq 2$. 
We also assume that there exists a collection $C_{1}=\langle \phi_{1}\rangle \cong {\mathbb Z}_{m_{1}},\ldots,C_{L_{G}}=\langle \phi_{L_{G}}\rangle \cong{\mathbb Z}_{m_{L_{G}}}$ of pairwise non-conjugate maximal cyclic subgroups of $G$ such that $G=\langle \phi_{1},\ldots,\phi_{L_{G}}\rangle$. 
By Proposition \ref{propo1}, if $G$ acts gpnf on a closed Riemann surface $S$ of genus at least two, then the signature of $S/G$ must be of the form $(\gamma;m_{1},\ldots,m_{L_{G}},k_{1},\ldots,k_{r})$. Let $m \geq 1$ is the order of $\phi_{1}\cdots \phi_{L_{G}}$. 
By Theorem \ref{teogpnf}, there is a gpnf-action of $G$ with $\gamma=0$ and $r=0$ (respectively, $r=1$) if $m=1$ (respectively, $m \geq 2$).
In fact, if we consider 
$$\Delta=\langle x_{1},\ldots,x_{L_{G}+1}:x_{1}^{m_{1}}=\cdots=x_{L_{G}}^{m_{L_{G}}}=x_{L_{G}+1}^{m}=1=x_{1} \cdots x_{L_{G}+1} \rangle,$$
where we delete the generator $x_{L_{G}+1}$ if $r=0$,
then we may consider the surjective homomorphism $\theta:\Delta \to G$ defined by $\theta(x_{j})=\phi_{j}$ (for $1 \leq j \leq L_{G}$), and $\theta(x_{L_{G}+1})=\phi_{1}\cdots \phi_{L_{G}}$. This surjective homomorphism has a torsion-free kernel $\Gamma_{\theta}$. The Riemann surface $S={\mathbb H}^{2}/\Gamma_{\theta}$, of genus $g \geq 2$, admits $G$ as a group of conformal automorphisms acting gpnf, and with quotient signature $(0;m_{1},\ldots,m_{L_{G}},m)$ if $m \geq 2$ or $(0;m_{1},\ldots,m_{L_{G}})$ if $m=1$. So, in order to obtain $\sigma^{pnf}_{hyp}(G)$, we need to minimize the value of $m$.
\end{rema}

%%%%%%%%%%%%%%%%%%%%
%%%%%%%%%%%%%%%%%%%%%%%
\section{The family $G^{0}_{s}(n)$}
In this paper, we consider the family of metacyclic groups of order $2n \geq 6$ with presentation of the form
$$G^{0}_{s}(n)=\langle x,y: x^{n}=y^{2}=1, yxy=x^{s}\rangle,$$
where $n \geq 3$ and $2 \leq s \leq n-1$ is such that $s^{2} \equiv 1 \mod(n)$. Note that $G^{0}_{n-1}(n)={\mathbb D}_{n}$ (the dihedral group of order $2n$). 

%%%%%%%%%%%%%%%
\subsection{Elements of $G^{0}_{s}(n)$}
Every element of $G^{0}_{s}(n)$ is of the form $x^{l}$ or $x^{l}y$, where $l \in \{0,1,\ldots,n-1\}$. 
The order of $x^{l}y$, where $l\in\{0,1,\ldots,n-1\}$, is even and given by 
$$|x^{l}y|=\frac{2n}{\gcd(n,l(s+1))}.$$

Also, for every $r \in \{1,\ldots,n-1\}$, $k \geq 1$, and $l \in \{0,1,\ldots,n-1\}$, 
$$x^{-r} (x^{l}y) x^{r}=x^{l+r(s-1)}y, \quad 
(x^{l}y)^{2k+1}=x^{l(1+k(s+1))}y.$$

Let us consider the following cyclic subgroups of $G^{0}_{s}(n)$:
$$C_{1}=\langle x \rangle, \; C_{2}=\langle y \rangle, \; C_{3}=\langle xy \rangle, \ldots, C_{n+1}=\langle x^{n-1}y\rangle.$$

Note that every cyclic subgroup of $G^{0}_{s}(n)$ is a subgroup of one of the above ones. So, the maximal cyclic subgroups are included in the above collection.

\begin{lemma}\label{lemita3}
$C_{1}, \; C_{2},$ and $C_{3}$
are maximal cyclic subgroups of $G^{0}_{s}(n)$.
\end{lemma}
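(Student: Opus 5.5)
The plan is to handle $C_{1}$ by an index argument, and to reduce $C_{2}$ and $C_{3}$ to a congruence using the two formulas $|x^{l}y|=2n/\gcd(n,l(s+1))$ and $(x^{l}y)^{2k+1}=x^{l(1+k(s+1))}y$ recorded just before the statement. Since every cyclic subgroup of $G^{0}_{s}(n)$ lies in one of $C_{1},\ldots,C_{n+1}$, it is enough to compare $C_{1},C_{2},C_{3}$ with these groups.

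\emph{Step 1 ($C_{1}$).} The subgroup $\langle x\rangle$ has order $n$ and index $2$. A cyclic group properly containing it would be all of $G^{0}_{s}(n)$. The group is not abelian, because $yxy=x^{s}\neq x$ (as $2\le s\le n-1$), so it is not cyclic. Hence $C_{1}$ is maximal cyclic.

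\emph{Step 2 ($C_{2}$ and $C_{3}$, reduction).} Powers of $x$ never equal $y$ or $xy$. So a cyclic group properly containing $\langle x^{j}y\rangle$ ($j=0,1$) must be $\langle x^{l}y\rangle$ with $l\neq j$. Moreover $x^{j}y$ must be an odd power of $x^{l}y$, since even powers lie in $\langle x\rangle$. By the power formula, this means there is some $k\ge 0$ with
$$l(1+k(s+1))\equiv j \pmod n.$$
Maximality of $C_{2}$ (resp.\ $C_{3}$) is therefore equivalent to this congruence having no solution with $l\not\equiv j$, for $j=0$ (resp.\ $j=1$). One can also compare orders first. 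If $\langle x^{j}y\rangle\subsetneq\langle x^{l}y\rangle$, then $\gcd(n,l(s+1))$ properly divides $\gcd(n,j(s+1))$. For $j=1$ the latter is $d:=\gcd(n,s+1)$. This restricts the possible $l$ to those with $\gcd(n,l(s+1))$ a proper divisor of $d$.

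\emph{Step 3 (main obstacle).} The heart of the argument is to show that the congruence above has no solution with $l\not\equiv j$. My first attempt would use $(s+1)(s-1)\equiv 0 \pmod n$ to study the multiplicative behaviour of $1+k(s+1)$ modulo $n$. If $1+k(s+1)$ were always a unit modulo $n$, the congruence for $j=0$ would force $l\equiv 0$. The $j=1$ case would then follow by comparing orders as in Step 2.

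I expect this step to fail in general, and I would check it before writing anything up. For $n=12$, $s=7$, $l=4$, $k=1$ we get $1+k(s+1)=9$, which is not a unit modulo $12$. Then $(x^{4}y)^{3}=x^{36}y=y$, while $|x^{4}y|=24/\gcd(12,32)=6$. So $y\in\langle x^{4}y\rangle\supsetneq\langle y\rangle$, and $C_{2}$ is not maximal cyclic in that case.

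So the statement as worded does not hold for all admissible $(n,s)$, and Step 3 cannot be completed without an extra hypothesis. What the plan does establish is the following. $C_{1}$ is always maximal cyclic. $C_{2}$ and $C_{3}$ are maximal cyclic exactly when the congruence in Step 2 has no solution with $l\not\equiv j$. This holds, for instance, whenever $1+k(s+1)$ is a unit modulo $n$ for every $k$, which is the case for $n$ odd. Any use of this lemma later in the paper, for example in part (3) of Theorem \ref{teo3}, would need to be checked against this restriction.
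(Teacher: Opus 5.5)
Your counterexample is correct, so the lemma as worded is false for $C_{2}$. In $G^{0}_{7}(12)$ one has $yx^{4}y=x^{28}=x^{4}$, so $x^{4}$ is central of order $3$. Hence $x^{4}y$ has order $6$ and $(x^{4}y)^{3}=y$, giving $\langle y\rangle\subsetneq\langle x^{4}y\rangle$.

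This also locates the error in the paper's proof. Apart from interchanging the roles of $y$ and $xy$ in its displayed equations, the proof passes from $l(1-s)\equiv 0$ and $(s-1)(s+1)\equiv 0 \pmod{n}$ to $\gcd(n,l(s+1))=n$. That step does not follow: your $l=4$ gives $4\cdot(-6)\equiv 0$ but $4\cdot 8\not\equiv 0 \pmod{12}$.

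Two points in your own write-up need fixing.

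\textbf{$C_{3}$ is maximal unconditionally.} You should prove this rather than leave it conditional on the unit hypothesis. If $xy=(x^{l}y)^{2k+1}$, then $l(1+k(s+1))\equiv 1 \pmod{n}$, so $l$ is a unit modulo $n$. Hence $\gcd(n,l(s+1))=\gcd(n,s+1)$, so $|x^{l}y|=|xy|$ and $\langle x^{l}y\rangle=\langle xy\rangle$. The paper reaches the right conclusion here, although its claim that $x^{l}y$ would have order $2n$ is wrong.

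\textbf{The $n$ odd claim is false.} You assert that $1+k(s+1)$ is always a unit modulo $n$ when $n$ is odd. For $n=15$, $s=4$, $k=1$ it equals $6$, which is not a unit. That unit condition holds exactly when every prime divisor of $n$ divides $s+1$, and for odd $n$ this forces $s=n-1$. Moreover, $C_{2}$ fails to be maximal in $G^{0}_{4}(15)$ too: $x^{5}$ is central and $(x^{5}y)^{3}=y$.

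The precise criterion is: $C_{2}$ is maximal cyclic if and only if no odd prime divisor of $n$ divides $s-1$.
\begin{itemize}
\item Suppose $y$ lies in a cyclic group $Z$ with $|Z|>2$. Then $Z\cap\langle x\rangle$ has index $2$ in $Z$. It must have odd order, because $y$ is the only involution of $Z$.
\item So some $x^{n/p}$, with $p$ an odd prime, centralizes $y$, which means $p\mid s-1$.
\item Conversely, such a $p$ gives $|x^{n/p}y|=2p$ and $(x^{n/p}y)^{p}=y$.
\end{itemize}
In particular, for $n$ odd and $2\le s\le n-2$ the subgroup $C_{2}$ is never maximal cyclic. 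This is the opposite of what your closing paragraph suggests. Your conclusion should be that $C_{1}$ and $C_{3}$ are always maximal cyclic, and $C_{2}$ is maximal exactly under this criterion.
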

\begin{proof}
Since $C_{1} \cong {\mathbb Z}_{n}$, $|G^{0}_{s}(n)|=2n$, and $G^{0}_{s}(n)$ is not cyclic, it follows that $C_{1}$ is maximal cyclic subgroup.

Assume $C_{2} \leq \langle x^{l}y\rangle$, for some $l \in \{2,\ldots,n-1\}$. Then, there is some $k$ such that $xy=(x^{l}y)^{2k+1}=x^{l(k(1+s)+1)}y$. So, $l(k(1+s)+1)\equiv 1 \mod(n)$. By multiplying by $1-s \nequiv 0 \mod(n)$, one gets that $l(1-s) \equiv 0 \mod(n)$. Now, since $(s-1)(s+1)=s^{2}-1 \equiv 0 \mod(n)$, it follows that $\gcd(n,l(s+1))=n$. 
This means that $\langle x^{l}y\rangle \cong {\mathbb Z}_{2}$. In particular, as $y \in \langle x^{l}y\rangle$, must have $l=0$, a contradiction.

Assume $C_{3} \leq \langle x^{l}y\rangle$, for some $l \in \{1,\ldots,n-1\}$. Then, there is some $k$ such that $y=(x^{l}y)^{2k+1}=x^{l(k(1+s)+1)}y$. So, $l(k(1+s)+1)\equiv 0 \mod(n)$. By multiplying by $1-s \nequiv 0 \mod(n)$, one gets that $l(1-s) \equiv 1 \mod(n)$, in particular, that $\gcd(n,l)=1$. So $x^{l}y$ will have order $2n$, a contradiction as $G^{0}_{s}(n$ is not cyclic.
\end{proof}

%%%%%%%%%%%%%%%%
\subsection{The strong symmetric genus of $G^{0}_{s}(n)$}
The computation of the strong symmetric group of $G^{0}_{s}(n)$ was done in \cite{Michael} (the particular case $s=n-1$, i.e., for the dihedral group, this was obtained in \cite{McLTesis}).

\begin{theo}[\cite{Michael}]
Let $n=p_{1}^{\alpha_{1}}q \geq 3$, where $p_{1}$ is the smallest prime divisor of $n$, $\alpha_{1} \geq 1$, $\gcd(p_{1},q)=1$, and $2 \leq s \leq n-1$ with $s^{2}\equiv 1 \mod(n)$. Then
$$\sigma^{0}_{hy}(G^{0}_{s}(n))=\left\{\begin{array}{ll}
\dfrac{n-\gcd(n,s+1)}{2}, & s \neq n-1.\\
n+1-p_{1}-\dfrac{n}{p_{1}}, & s=n-1, \; \alpha_{1}=1, \; q \geq 2.\\
n-\dfrac{n}{p_{1}}, & s=n-1, \; \mbox{and either $\alpha_{1}\geq 2$, or $\alpha_{1}=q=1$}.
\end{array}
\right.
$$

Moreover, if $S$ is a closed Riemann surface admitting $G^{0}_{s}(n)$ as a group of conformal automorphisms, and $S$ has the minimal genus $\sigma^{0}_{hy}(G^{0}_{s}(n))$, then $S/G^{0}_{s}(n)$ has signature:
$$\left\{\begin{array}{ll}
\left(0;2,\dfrac{2n}{\gcd(n,s+1)},n\right), & s\neq n-1.\\
\left(0;2,2,p_{1},\frac{n}{p_{1}}\right), & s=n-1, \; \alpha_{1}=1, \; q \geq 2.\\
\left(0;2,2,p_{1},n\right), & s=n-1, \; \mbox{and either $\alpha_{1}\geq 2$, or $q=1$}.
\end{array}
\right\}.
$$
\end{theo}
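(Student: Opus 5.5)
The plan is to treat the dihedral case $s=n-1$ as known (it is Maclachlan's computation \cite{McLTesis}, recovered in Theorem \ref{teo2} via the same methods) and to concentrate on $s\neq n-1$. Set $d=\gcd(n,s+1)$ and $N=2n$. By the Riemann--Hurwitz formula $g=1+N\mu(\Lambda)/2$, the problem reduces to two tasks. The first is to exhibit a surjection $\theta:\Lambda\to G^{0}_{s}(n)$ with torsion-free kernel from a Fuchsian group of the claimed signature. The second is to show that every other admissible signature has strictly larger area, or else cannot admit such a surjection.

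\emph{Upper bound.} I will take the signature $(0;2,2n/d,n)$ and define
\[
\theta(x_{1})=y,\qquad \theta(x_{2})=yx=x^{s}y,\qquad \theta(x_{3})=(y\cdot yx)^{-1}=x^{-1}.
\]
The image contains $x$ and $y$, so $\theta$ is surjective, and the product relation holds by construction. By the formula $|x^{l}y|=2n/\gcd(n,l(s+1))$ with $l=s$ coprime to $n$, the orders of the images are $2$, $2n/d$ and $n$. So the kernel is torsion-free. One then computes $\mu=\tfrac12-\tfrac{d}{2n}-\tfrac1n$, which gives $g=1+n\mu=(n-d)/2$. This value is at least $2$ once $n\geq 8$ is even or, in the odd case, when $d\leq n-4$. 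I will check that the hypothesis $s\neq n-1$ forces this, because $s+1\not\equiv 0$ implies $d\leq n/2$ (since $d$ is a proper divisor of $n$).

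\emph{Lower bound.} The key structural fact is $G/\langle x\rangle\cong{\mathbb Z}_{2}$. In any generating vector $(\theta(a_i),\theta(b_i),\theta(x_j))$, an even and nonzero number of the elliptic generators (or of the hyperbolic ones) must map outside $\langle x\rangle$, that is, to elements $x^{l}y$. These have order dividing $2n/d$, while elements of $\langle x\rangle$ have order dividing $n$. If $\gamma\geq1$, then already $\mu\geq \tfrac12$ (for $(1;2)$), which exceeds our value, so $\gamma=0$. If $r\geq4$, the minimal possibilities are $(0;2,2,2,k)$-type signatures. With an even number of generators of the form $x^ly$, the best case gives $\mu\geq\tfrac12-\tfrac1k\geq \tfrac12-\tfrac1n$, again not smaller. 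Here the order of the product of two elements $x^ly\,x^{l'}y=x^{l+l's}$ must be controlled: the $\langle x\rangle$-parts must generate $\langle x\rangle$, which is essentially $(n,\ldots)$-coprimality. So the remaining case is triangle signatures $(0;a,b,c)$, where two periods come from elements $x^{l}y$ (dividing $2n/d$, and even) and one comes from $x^{l+l's}$ (dividing $n$). Generation forces this third element to generate $\langle x\rangle$ modulo the subgroup generated by the others. I will push this to show that $c=n$ is forced unless one of the $x^ly$ has full order $2n/d$ and the other is an involution. Comparing $\tfrac1a+\tfrac1b+\tfrac1c$ over the finitely many shapes then shows that $(2,2n/d,n)$ maximizes it, which proves both the genus value and the uniqueness of the signature.

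The main obstacle I expect is precisely this triangle case analysis. One must rule out competitors such as $(0;2n/d,2n/d,k)$ with $k<n$, or $(0;4,2n/d,n)$-type vectors, and this needs arithmetic of $s$ modulo the prime powers of $n$. Concretely, I will use that $(s-1)(s+1)\equiv0$ splits $n$ between the factors, so $\gcd(n,s-1)\cdot d$ is a multiple of $n$ up to a factor $2$. From this I will show that when both outside generators $x^{l}y$, $x^{l'}y$ have orders below $2n/d$, the element $x^{l+l's}$ lies in a proper subgroup of $\langle x\rangle$, which contradicts surjectivity.
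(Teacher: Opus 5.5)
The paper gives no proof of this statement; it is quoted from \cite{Michael}, so your argument has to stand on its own. Deferring $s=n-1$ to \cite{McLTesis} is acceptable. However, Theorem~\ref{teo2} does not ``recover'' that case: its proof imports $\sigma^{0}_{hy}({\mathbb D}_{n})$ and the minimal signature from this very statement. For $s\neq n-1$, two parts of your plan are correct. Your upper bound via $(y,\,yx,\,x^{-1})$ works. Your reduction to triangle signatures with generating triple $(x^{l}y,\,x^{l'}y,\,x^{-(l+l's)})$ is also sound: $\gamma\geq 1$ and $r\geq 4$ are excluded by area alone, because $d\geq 2$ bounds every element order by $n$.

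The gap is the triangle case, which carries the entire lower bound and the uniqueness of the signature. Both lemmas you plan to prove there are false.

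\textbf{First lemma.} Take $n=15$, $s=4$, so $d=5$ and $2n/d=6$. The elements $xy$ and $x^{2}y$ both have order $6$. They generate $G$, since they produce $(xy)^{2}=x^{5}$ and $xy\,x^{2}y=x^{9}$. Yet $c=|x^{9}|=5<n$. So $c=n$ is not forced outside the configuration ``one element of full order, one involution''. In fact your dichotomy is reversed: inside that configuration, $c=n$ is forced.

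\textbf{Second lemma.} Take $n=105$, $s=76$, so $d=7$ and $2n/d=30$. The elements $x^{3}y$ and $x^{5}y$ have orders $10$ and $6$, both below $2n/d$. Yet $x^{3}y\,x^{5}y=x^{383}=x^{68}$ has order $105$, and the pair generates $G$.

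More generally, $x^{l+l's}$ lying in a proper subgroup of $\langle x\rangle$ never contradicts surjectivity by itself. Indeed, $\langle x^{l}y,x^{l'}y\rangle\cap\langle x\rangle=\langle x^{e}\rangle$ with $e=\gcd(n,\,l(s+1),\,l+l's)$. These competitors genuinely exist, so they must be beaten on area rather than excluded by non-generation.

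\textbf{A way to close the gap.} Put $m=n/d$. Since $\gcd(m,(s+1)/d)=1$, you get
\[
a=2m/u,\qquad b=2m/v,\qquad c=n/w,
\]
where $u=\gcd(m,l)$, $v=\gcd(m,l')$ and $w=\gcd(n,l+l's)$. The generation condition $e=1$ does two things. It makes $u,v,w$ pairwise coprime. It also makes $w$ prime to $s+1$, hence $w\mid m$. Together these give $uvw\mid m$. Then
\[
2m\Bigl(\tfrac12+\tfrac1{2m}+\tfrac1n-\tfrac1a-\tfrac1b-\tfrac1c\Bigr)=m+1-u-v+\tfrac{2(1-w)}{d}.
\]
If $w=1$, this is $\geq (u-1)(v-1)\geq 0$. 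Equality holds iff $\{u,v\}=\{1,m\}$, which is exactly the signature $(0;2,2n/d,n)$.

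If $w\geq 2$, use $u+v\leq uv+1\leq m/w+1$ to get a lower bound of $(w-1)(n-2w)/(wd)\geq 0$. Equality holds only when $d=2$, $w=m$ and $u=v=1$. That case is again $(0;2,n,n)=(0;2,2n/d,n)$. This yields both the genus value and the uniqueness of the signature.
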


\begin{rema}
Let us observe that the signature $\left(0;2,2,p_{1},\frac{n}{p_{1}}\right)$, produced in the case that $s=n-1$, $\alpha_{1}=1$ and $q \geq 2$, cannot be produced by a gpnf-action of $G^{0}_{n-1}(n)={\mathbb D}_{n}$.
As we will see later, the signature $\left(0;2,2,p_{1},n\right)$, produced in the case that $s=n-1$, and either (i) $\alpha_{1}\geq 2$, or (ii) $q=1$, is produced by a gpnf-action of $G^{0}_{n-1}(n)={\mathbb D}_{n}$.
\end{rema}

%%%%%%%%%%%%%%
\subsection{Some examples for $\sigma^{pnf}_{hyp}(G^{0}_{s}(n))$}\label{Sec:Yerika}
As a consequence of Theorem \ref{teo0}, for $s \neq n-1$, one can compute explicitly the value of $\sigma^{pnf}_{hyp}(G^{0}_{s}(n))$.
The computation of $\sigma^{pnf}_{hyp}(G^{0}_{s}(n))$ was previously obtained for some cases (quasi-abelian groups and quasi-dihedral groups).

\begin{theo}[\cite{Y2024,Y2026}]
$$\sigma^{0}_{hyp}(G^{0}_{2^{m-2}+1}(2^{m-1}))=2^{m-2}-1 < \sigma^{pnf}_{hyp}(G^{0}_{2^{m-2}+1}(2^{m-1}))=\left\{\begin{array}{ll}
2^{m-1}(m-3)+1, & m \geq 5\; \mbox{odd}.\\
2^{m-2}(2m-3)+1, & m \geq 4\; \mbox{even}.
\end{array}
\right.
$$
$$\sigma^{0}_{hyp}(G^{0}_{2m-1}(4m))=m \leq \sigma^{pnf}_{hyp}(G^{0}_{2m-1}(4m))=\left\{\begin{array}{ll}
m, & m \geq 2\; \mbox{even}.\\
3m, & m \geq 3\; \mbox{odd}.
\end{array}
\right.
$$
\end{theo}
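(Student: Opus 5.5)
The plan is to derive both formulas from Theorem \ref{teo0}. In each family I will list the conjugacy classes of maximal cyclic subgroups, compute $L_G$ and the orders $m_i$, determine $m_G$, and substitute into \eqref{formula1}. The values $\sigma^{0}_{hyp}$ on the left come from the theorem of \cite{Michael} with $s\neq n-1$, namely $(n-\gcd(n,s+1))/2$. For $n=2^{m-1}$ and $s=2^{m-2}+1$ we have $\gcd(n,s+1)=2$, which gives $2^{m-2}-1$. For $n=4m$ and $s=2m-1$ we have $\gcd(n,s+1)=2m$, which gives $m$. All the work is on the purely non-free side.

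\textbf{Step 1: maximal cyclic subgroups.} I will use the formulas of the previous subsection:
\[
|x^ly|=\frac{2n}{\gcd(n,l(s+1))},\qquad x^{-r}(x^ly)x^r=x^{l+r(s-1)}y,\qquad (x^ly)^{2k+1}=x^{l(1+k(s+1))}y .
\]
\begin{itemize}
\item \emph{Sorting into classes.} Every cyclic subgroup lies in $\langle x\rangle$ or in some $\langle x^ly\rangle$. Conjugation by powers of $x$ shifts $l$ by multiples of $s-1$, and conjugation by $y$ sends $x^ly$ to $x^{sl}y$. So the classes of the $C_{l+2}=\langle x^ly\rangle$ are indexed by $l$ modulo $\gcd(n,s-1)$ together with the action $l\mapsto sl$.
\item \emph{Maximality.} $\langle x^ly\rangle\le\langle x^{l'}y\rangle$ holds exactly when $l\equiv l'(1+k(s+1))\pmod n$ for some $k$. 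I will decide, class by class, which $\langle x^ly\rangle$ are maximal.
\item \emph{Modular family} ($s-1=2^{m-2}$, $s+1=2^{m-2}+2$). I expect a small number of classes, among them $\langle x\rangle$, $\langle y\rangle$, $\langle xy\rangle$ and possibly $\langle x^2y\rangle$, with orders such as $n,2,n,\dots$.
\item \emph{Family $n=4m$} ($s-1=2m-2$, $s+1=2m$). Here $x^ly$ has order $4$ or $2$ according to the parity of $l$ relative to $m$. The list of classes will depend on the parity of $m$, since $\gcd(4m,2m-2)$ is $4$ or $2$ according as $m$ is odd or even.
\end{itemize}
Lemma \ref{lemita3} already provides $C_1,C_2,C_3$. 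The remaining classes must be checked by hand.

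\textbf{Step 2: computing $m_G$.} For every choice of representatives $B_i=\langle\phi_i\rangle$ within the classes, and every choice of exponents $l_i$ coprime to $|B_i|$, I must minimise the order of $\phi_1^{l_1}\cdots\phi_{L_G}^{l_{L_G}}$. Every element is either $x^a$ or $x^ay$, and the product of the chosen generators is explicit modulo $n$. So $m_G$ reduces to an arithmetic question: minimise the order over the set of exponents $a$ that are attainable.
\begin{itemize}
\item \emph{Parity of the number of reflection factors.} If that number is even, the product lies in $\langle x\rangle$. Then I want $a$ with $\gcd(a,n)$ as large as possible, ideally $a\equiv0$, which would give $m_G=1$.
\item \emph{Source of the parity cases.} The conjugation shifts by $s-1$ and the exponent constraints $\gcd(l_i,|B_i|)=1$ restrict the residues of $a$ that can be reached, typically modulo $2$ or $4$. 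I expect this restriction to produce the odd/even split in $m$.
\end{itemize}

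\textbf{Step 3: substitution and check.} I will plug the data into \eqref{formula1}. For example, the second family yields $\sigma^{pnf}=m$ when $m$ is even, which matches $\sigma^0$. That forces the signature $(0;2,4,4m)$, so the data should be exactly $L_G=3$, the classes of $\langle x\rangle,\langle y\rangle,\langle xy\rangle$, and $m_G=1$. When $m$ is odd, $\sigma^{pnf}=3m$, and solving \eqref{formula1} for that value will tell me which extra class or which $m_G$ must appear. This gives a consistency check on Steps 1--2.

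The main obstacle is Step 2. Proving a lower bound for $m_G$ requires showing that \emph{no} choice of conjugates and exponents gives a smaller order, and this is where the cases ($m$ even or odd, and $m\ge4$ or $m\ge5$) arise. I would first write down, for each class, the set of exponents $a$ that can be reached modulo $\gcd(n,s-1)$. The sum of these sets gives the attainable exponents of the product, and I then minimise the order over that sum.
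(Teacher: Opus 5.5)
\textbf{There is a genuine gap: the statement as printed is false in two of its four cases, so no proof of it can succeed.} Your method is the right one: compute $L_G$, the orders $m_i$ and $m_G$, then substitute into \eqref{formula1}. The paper gives no proof of its own here; it only remarks that such values follow from Theorem \ref{teo0}. Carried out, however, the method contradicts two of the four displayed values. Your Step 3, which infers the data from the target value instead of computing them, is exactly where this would be concealed.

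\textbf{First family.} Take $n=2^{m-1}$, $s=2^{m-2}+1$, so that $\gcd(n,s+1)=2$ and $x^2$ is central. The conjugacy classes of maximal cyclic subgroups are:
\begin{itemize}
\item $\langle x\rangle$ and $\langle xy\rangle$, both of order $2^{m-1}$; the latter contains every $x^ly$ with $l$ odd.
\item $\langle x^{2^j}y\rangle$ of order $2^{m-1-j}$, for $1\le j\le m-3$; it contains every $x^ly$ whose $l$ has $2$-adic valuation exactly $j$.
\item $\langle y\rangle$, which is conjugate to $\langle x^{2^{m-2}}y\rangle$ because $x^{-1}yx=x^{2^{m-2}}y$.
\end{itemize}
So $L_G=m$, not a small fixed number, and $\sum 1/m_i=1$. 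Write the product of the chosen generators as $x^{E}y^{m-1}$. Then $E$ is an odd, freely adjustable exponent from the $\langle x\rangle$-generator, plus an odd, freely adjustable one from the $\langle xy\rangle$-generator, plus even terms. Hence $E\equiv 0$ can be arranged, giving $m_G=1$ for $m$ odd and $m_G=2$ for $m$ even (the product then lies in $\langle x\rangle y$).

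Formula \eqref{formula1} gives $1+2^{m-1}(m-2-1/m_G)$. For odd $m$ this is $2^{m-1}(m-3)+1$, as displayed. For even $m$ it is $2^{m-2}(2m-5)+1$. For $m=4$ that is $13$, the value the paper itself computes for $G^{0}_{5}(8)$ in Section \ref{Sec:ejemplos}; for $m=6$ it is $113$, the table entry for $(n,s)=(32,17)$. The displayed $2^{m-2}(2m-3)+1$ would give $21$ and $145$, and no choice of $m_G$ produces it (it would need $1/m_G=-1/2$).

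\textbf{Second family.} Take $n=4m$, $s=2m-1$.
\begin{itemize}
\item If $m$ is even, then $\gcd(4m,s-1)=2$ and $L_G=3$ with orders $4m,2,4$. The identity $x\cdot y\cdot x^{-s}y=1$ gives $m_G=1$ and $\sigma^{pnf}=m$, as displayed.
\item If $m$ is odd, then $\gcd(4m,s-1)=4$ and $s\equiv 1 \pmod 4$, so conjugation preserves $l \bmod 4$. Thus $\langle y\rangle$ and $\langle x^2y\rangle$ are non-conjugate maximal cyclic subgroups of order $2$. All $\langle x^ly\rangle$ with $l$ odd (order $4$, each containing $x^ly$ and $x^{l+2m}y$) form a single class. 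So $L_G=4$ with orders $4m,2,2,4$.
\end{itemize}
In the odd case three factors lie in $\langle x\rangle y$, so the product does too and $m_G=2$. Formula \eqref{formula1} then gives $1+4m\left(\tfrac54-\tfrac{1}{4m}\right)=5m$, not $3m$. The value $3m$ is exactly what you get by forcing $m_G=1$, i.e.\ the signature $(0;2,2,4,4m)$, which your own parity observation in Step 2 rules out. The table in Section \ref{Sec:ejemplos} agrees: it lists $15,25,35,45$ for $(n,s)=(12,5),(20,9),(28,13),(36,17)$.

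Executed correctly, your programme proves the other two cases. It also shows that the first line should read $2^{m-2}(2m-5)+1$ for even $m$, and the second line $5m$ for odd $m$.
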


%%%%%%%%%%%%%%%%
\subsection{An auxiliary lemma}
Let us consider the case when $2 \leq s \leq n-2$. As $s^2 \equiv 1 \mod(n)$, necessarily $n \geq 8$ and $n$ is not a prime integer. Some examples are:

If $n=8$, then $s \in \{3,5\}$.

If $n=9,10,14$, then there is no solution for $s$ as above.

If $n=12$, then $s\in\{5,7\}$.

\begin{lemma}\label{lemita2}
Let $n=2^{r}q \geq 3$, where $r \geq 0$ and $q\geq 1$ is odd, and $2 \leq s \leq n-2\}$ such that $s^{2} \equiv 1 \mod(n)$.
\begin{enumerate}[leftmargin=15pt]
\item If either (i) $r\in\{0,1,2\}$, or (ii) $s \neq 1+2^{r-1}q$, then the elements of order $n$ in $G^{0}_{s}(n)$ are those of the form $x^{u}$, where $u \in \{1,\ldots,n-1\}$ and $\gcd(n,u)=1$.

In particular, if $\phi \in {\rm Aut}(G^{0}_{s})$, then
$$\phi(x)=x^{u}, \; \phi(y)=x^{v}y,$$
where $u \in \{1,\ldots,n-1\}$ and $\gcd(n,u)=1$, $v \in \{0,1,\ldots,n-1\}$ and $\gcd(n,v(s+1))=n$.

\item If $r \geq 3$ and $s=1+2^{r-1}q$, then the elements of order $n$ in $G^{0}_{s}(n)$ are: (i) $x^{u}$, where $u \in \{1,\ldots,n-1\}$ and $\gcd(n,u)=1$, or 
(ii) $x^{l}y$, where $l\in\{1,\ldots,n-1\}$ and $\gcd(n,l)=1$.

Moreover, apart for those elements of ${\rm Aut}(G^{0}_{s}(n))$ as described in the above case, there are also elements $\psi \in {\rm Aut}(G^{0}_{s}(n))$ of the form
$$\psi(x)=x^{l}y, \; \phi(y)=x^{v}y,$$
where $l \in \{1,\ldots,n-1\}$ and $\gcd(n,l)=1$, $v \in \{0,n/2\}$.

\end{enumerate}
\end{lemma}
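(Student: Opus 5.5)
The plan is to read off element orders from the formula $|x^{l}y|=2n/\gcd(n,l(s+1))$, and then get the automorphisms from the fact that an automorphism must send $x$ to an element of order $n$. Every element is $x^{u}$ or $x^{l}y$. The element $x^{u}$ has order $n$ exactly when $\gcd(n,u)=1$. The element $x^{l}y$ has order $n$ exactly when $\gcd(n,l(s+1))=2$. So the whole question is when this last condition can hold with $2\le s\le n-2$.

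\textbf{Step 1: which $s$ admit $x^{l}y$ of order $n$.} Suppose $\gcd(n,l(s+1))=2$. Then $n$ is even, so $r\ge1$. Also $s$ is odd, because $s^{2}\equiv1 \mod(n)$. Since $\gcd(n,s+1)$ divides $2$ and $s+1$ is even, we get $\gcd(n,s+1)=2$.

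From $n\mid (s-1)(s+1)$ we handle the odd and even parts separately.
\begin{itemize}
\item The odd part: $\gcd(q,s+1)=1$, so $q\mid s-1$.
\item The $2$-part: $s+1\equiv 2 \mod 4$, so $2^{r-1}\mid s-1$.
\end{itemize}
Hence $s\equiv1 \mod(2^{r-1}q)$, so $s\in\{1,1+n/2\}$. Since $s\ne1$, this forces $s=1+2^{r-1}q$.

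Next we rule out $r\le 2$.
\begin{itemize}
\item If $r=1$, then $s=1+q$ is even, a contradiction.
\item If $r=2$, then $s+1=2(q+1)$ with $q+1$ even. So $4\mid\gcd(n,s+1)$, a contradiction.
\end{itemize}
This proves that in case (1), i.e.\ $r\le2$ or $s\ne 1+2^{r-1}q$, the only elements of order $n$ are the $x^{u}$ with $\gcd(n,u)=1$.

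Conversely, let $r\ge3$ and $s=1+2^{r-1}q$. First, $s^{2}=1+2^{r}q+2^{2r-2}q^{2}\equiv1 \mod(n)$. Next,
$$\gcd(n,s+1)=\gcd(2^{r}q,\,2+2^{r-1}q)=2\gcd(2^{r-1}q,\,1+2^{r-2}q)=2.$$
Therefore $\gcd(n,l(s+1))=2$ holds if and only if $l$ is odd and coprime to $q$, that is, $\gcd(n,l)=1$. This gives the list in (2).

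\textbf{Step 2: automorphisms.} Any $\phi\in{\rm Aut}(G^{0}_{s}(n))$ sends $x$ to an element of order $n$ and $y$ to an involution outside $\langle\phi(x)\rangle$.

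In case (1) we must have $\phi(x)=x^{u}$ with $\gcd(n,u)=1$. Then $\phi(y)$ is a non-power of $x$ of order $2$, so $\phi(y)=x^{v}y$ with $\gcd(n,v(s+1))=n$.

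Conversely, we check the relation. Since $(x^{v}y)x^{u}(x^{v}y)=x^{us}$, the relation $yxy=x^{s}$ is preserved. The images generate the group, because they generate a subgroup containing $\langle x\rangle$ and an element outside it. So $\phi$ is a surjective endomorphism of a finite group, hence an automorphism.

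In case (2) there is an extra possibility, $\psi(x)=x^{l}y$ with $\gcd(n,l)=1$. Then $\psi(y)=x^{v}y$ must be an involution, so $v(s+1)\equiv0 \mod(n)$. Since $\gcd(n,s+1)=2$, this gives $v\in\{0,n/2\}$.

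\textbf{Step 3: the relation for $\psi$.} This is the main computation to verify. Write $s=2k+1$ with $k=2^{r-2}q$. Using $r\ge3$,
$$k(s+1)=2^{r-2}q\,(2+2^{r-1}q)=\tfrac{n}{2}+n\,2^{r-3}q\equiv \tfrac n2 \mod(n).$$
Since $l$ is odd, this gives
$$(x^{l}y)^{s}=x^{l(1+k(s+1))}y=x^{l+n/2}\,y.$$
On the other side,
$$(x^{v}y)(x^{l}y)(x^{v}y)=x^{2v+ls}\,y.$$
Here $2v\equiv0$, and $ls=l+l\,n/2\equiv l+n/2$. So the two sides agree, and the relation holds.

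Generation follows as before: $\psi(x)$ has order $n$ and $x^{v}y\notin\langle x^{l}y\rangle$. The latter holds because the only involution in that cyclic group is $(x^{l}y)^{n/2}$, which is a power of $x$ since $n/2$ is even. Hence $\psi$ is an automorphism.
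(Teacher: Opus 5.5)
Your proof is correct and follows the same skeleton as the paper's argument: you reduce to $\gcd(n,l(s+1))=2$, deduce $\gcd(n,s+1)=2$ and $2^{r-1}q\mid s-1$, pin down $s=1+2^{r-1}q$ with $r\ge 3$, and verify the relation for $\psi$ directly (your computation $k(s+1)\equiv n/2$ is the paper's reduction to $(s-1)^{2}/2\equiv 0 \bmod n$). The only real difference is how $s$ is pinned down. You use $s\equiv 1 \bmod n/2$ plus parity, where the paper runs an inequality chase on $s-1=2^{r-1}q\beta$ using $s\le n-2$; yours is cleaner. One small point: your justification $s+1\equiv 2 \bmod 4$ needs $r\ge 2$, but for $r=1$ the conclusion $2^{r-1}\mid s-1$ is trivial anyway.
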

\begin{proof}
Assume there is an element of the form $x^{l}y \in G^{0}_{s}(n)$ of order $n$. Since $|x^{l}y|=\frac{2n}{\gcd(n,l(s+1))}$, we must have that
\begin{equation}\label{*}
\gcd(n,l(s+1))=2.
\end{equation}
This in particular asserts that $r \geq 1$. Since $s^{2} \equiv 1 \mod(n)$, it follows that $n$ divides $(s-1)(s+1)$. Since $n$ is even, so $s$ must be odd, equality \eqref{*} asserts that (i) $\gcd(n,l)=1$, and (ii) $\gcd(n,s+1)=2$. In particular, $s+1=2\alpha$, where $\gcd(n,\alpha)=1$.

As $n=2^{r}q$ divides $(s-1)(s+1)$, the above asserts that $s-1=2^{r-1} q \;\beta$ is even, for some $\beta \geq 1$. As $2\alpha=s+1$, we obtain that
$\alpha=1+2^{r-2} q \; \beta$.

Since $s \leq n-2=2^{r}q-2$, it follows that $s+1 \leq 2^{r}q-1$. In particular,
$$1+2^{r-2}q \;\beta=\alpha=(s+1)/2= \leq \frac{2^{r}q-1}{2},$$
so
\begin{equation}\label{**}
2+2^{r-1}q \; \beta \leq 2^{r}q-2,
\end{equation}
and
\begin{equation}\label{***}
2^{r-2}q \; \beta \leq 2^{r-1}q-2,
\end{equation}
from which we obtain that 
\begin{equation}\label{****}
2^{r-3}q \; \beta \leq 2^{r-2}q-1.
\end{equation}

If $r=1$, then $\beta \geq 2$ is even and, by \eqref{**}, $2+q\; \beta \leq 2q-2$, implying that $q(\beta-2) \leq -4$, a contradiction.

If $r=2$, then $\beta \leq 2$ is even and, by \eqref{***}, $q \; \beta \leq 2q-2<22$, from which we obtain that $\beta<2$, a contradiction.

If $r \geq 3$, then, by \eqref{****}, $\beta=1$ and $\alpha=1+2^{r-2}q$,  so, $s=1+2^{r-1}q$.

The above takes care of parts (1) and (2), with the exception of the form of the automorphisms.

The map $\phi$ as described in the hypothesis can be seen to be an isomorphism.

Let us now consider the case when $n=2^{r}q$, where $r \geq 3$ and $q \geq 1$ is odd. As seen above, if $x^{l}y$ has order $n$, then
\begin{enumerate}[leftmargin=15pt]
\item $l \geq 1$, $\gcd(n,l)=1$;
\item $s=1+2^{r-1}q$.
\end{enumerate}

So, $\gcd(n,l(s+1))=\gcd(n,s+1)=2$.

Let us consider the map $\psi$ as described in part (2) of the Lemma. One may see, by the definition of $\psi$, that $\psi(x)$ has order $n$, and that $\psi(y)$ has order two.
Also, since $2v \equiv 0 \mod(n)$, 
$$\psi(y)\psi(x)\psi(y)=(x^{v}y)(x^{l}y)(x^{v}y)=x^{2v+ls}y=x^{ls}y.$$ 
On the other hand,
$$\psi(x)^{s}=(x^{l}y)^{s}=(x^{l}y)^{1+2^{r-1}q}=((x^{l}y)^{2})^{2^{r-2}q} (x^{l}y)=(x^{l(s+1)})^{2^{r-2}q} (x^{l}y)=$$
$$=x^{l(s+1)(s-1)/2 +1}y =x^{l(s^{2}+1)/2} y.$$

The equality $x^{ls}=x^{l(s^{2}+1)/2} y$ is equivalent to have
$l((s^{2}+1)/2-s) \equiv 0 \mod(n)$, and since $\gcd(n,l)=1$, this is equivalent to $(s^{2}+1)/2-s \equiv 0 \mod(n)$, equivalently, $(s-1)^{2}/2 \equiv 0 \mod(n)$. As $s=1+2^{r-1}q$, the previous is equivalent to$2^{2r-3}q^{2} \equiv 0 \mod(n)$, which is true for $r \geq 3$. So, all the above ensures that $\psi$ is a homomorphism. It can be seen that $\psi$ is injective, so it is an automorphism as required.
\end{proof}

%%%%%%%%%%%%%%%
\subsection{Three maximal cyclic subgroups of $G^{0}_{s}(n)$ and $\sigma^{pnf}_{hyp}(G^{0}_{s}(n))$}
Let us consider the maximal cyclic subgroups $C_{1}=\langle x \rangle \cong {\mathbb Z}_{n}$, $C_{2}=\langle y \rangle \cong {\mathbb Z}_{2}$, and $C_{3}=\langle xy \rangle \cong {\mathbb Z}_{2/\gcd(n,s+1)}$ (Lemma \ref{lemita3}). As $n \geq 3$, $C_{1}$ and $C_{2}$ are non-conjugate. If, moreover, $2 \leq s \leq n-2$,  then these three are pairwise non-conjugate.

As a consequence of Lemma \ref{lemita1}, we observe the following (which provides a proof of part (3) of Theorem \ref{teo3}).

\begin{prop}\label{casosimple}
Assume $n \geq 3$ and $2 \leq s \leq n-1$ are integers such that $s^{2} \equiv 1 \mod(n)$. If every maximal cyclic subgroup is conjugate to either $C_{1}$, $C_{2}$, or $C_{3}$, then $\sigma^{pnf}_{hyp}(G^{0}_{s}(n))=\sigma^{0}_{hyp}(G^{0}_{s}(n))$.
\end{prop}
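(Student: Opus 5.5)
Since $\sigma^{0}_{hyp}(G)\leq\sigma^{pnf}_{hyp}(G)$ always holds, it suffices to show that some conformal action of $G=G^{0}_{s}(n)$ on a surface of genus $\sigma^{0}_{hyp}(G)$ is gpnf. I will exhibit such an action using a generating vector realizing the minimal signature from \cite{Michael}, and check the gpnf property with Lemma \ref{lemita1}(2).

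\textbf{Case $s=n-1$.} Here $G={\mathbb D}_{n}$. Under the hypothesis, the maximal cyclic subgroups up to conjugacy are $\langle x\rangle$, $\langle y\rangle$ and $\langle xy\rangle$; the last two are conjugate when $n$ is odd. The conclusion should then follow from Theorem \ref{teo2} in the cases where it gives equality. If those cases do not cover everything (for example $n$ even with $\alpha_1=1$, $q\geq 2$), then the hypothesis of the Proposition must fail there, or the statement is to be read with $s\neq n-1$. I will treat the main case $2\leq s\leq n-2$ as intended, matching part (3) of Theorem \ref{teo3}.

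\textbf{Case $2\leq s\leq n-2$.} The minimal signature is $(0;2,\frac{2n}{\gcd(n,s+1)},n)$. I will take the generating triple $(y,\,yx^{-1},\,x)$, whose product is $y\cdot yx^{-1}\cdot x=1$.
\begin{itemize}
\item The element $y$ has order $2$.
\item The element $x$ has order $n$.
\item The element $yx^{-1}=x^{-s}y$ has order $2n/\gcd(n,s(s+1))=2n/\gcd(n,s+1)$, because $\gcd(n,s)=1$ (as $s^{2}\equiv1$).
\end{itemize}
So the triple generates $G$ and its orders match the signature. By Riemann--Hurwitz,
$$g=1+n\Bigl(1-\tfrac12-\tfrac{\gcd(n,s+1)}{2n}-\tfrac1n\Bigr)=\frac{n-\gcd(n,s+1)}{2}=\sigma^{0}_{hyp}(G).$$
This is exactly Theorem \ref{teogpnf} applied to $C_{2}$ and $C_{1}$ with the third element $(yx)^{-1}$ (taking $\hat M=2$). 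Theorem \ref{teogpnf} requires every maximal cyclic subgroup to be conjugate to one of the listed $C_i$. This is precisely the Proposition's hypothesis, except that $C_{3}=\langle xy\rangle$ appears through the cone point of order $m$ rather than as one of the $C_i$.

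Concretely, the three cone-point stabilizers are conjugates of $\langle y\rangle$, $\langle yx^{-1}\rangle$ and $\langle x\rangle$. Each of these subgroups acts with fixed points. Since $\langle yx^{-1}\rangle=\langle x^{-s}y\rangle$, it remains to check that $\langle x^{-s}y\rangle$ is conjugate to $C_{3}=\langle xy\rangle$.

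\textbf{The main obstacle: this conjugacy.} By the conjugation formula $x^{-r}(x^{l}y)x^{r}=x^{l+r(s-1)}y$, the element $x^{l}y$ is conjugate to $x^{l'}y$ whenever $l\equiv l' \pmod{\gcd(n,s-1)}$. Taking powers shows that $\langle x^{l}y\rangle$ contains $x^{l(1+k(s+1))}y$ for every $k$.

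I would argue as follows. The group $\langle x^{-s}y\rangle$ is maximal cyclic, since it is the same subgroup as $\langle yx\rangle$ up to inversion and conjugation by $y$, and $C_{3}$ is maximal by Lemma \ref{lemita3}. So by hypothesis it is conjugate to $C_{1}$, $C_{2}$ or $C_{3}$. Comparing orders rules out $C_{2}$, and rules out $C_{1}$ because no element $x^{l}y$ has order $n$ outside the exceptional case of Lemma \ref{lemita2}. In that exceptional case $\gcd(n,s+1)=2$, so $C_3$ also has order $n$, and one must instead exclude $C_1$ directly, since $x^{l}y\notin\langle x\rangle$. Hence the subgroup is conjugate to $C_{3}$, all cone stabilizers are conjugates of $C_{1},C_{2},C_{3}$, and Lemma \ref{lemita1}(2) shows the action is gpnf.

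Alternatively, and more directly: $y(xy)y=yx=(x^{s}y)$ up to the relation, so conjugating $xy$ by $y$ gives $yx=x^{s}y$, whose inverse is $x^{-s}y$ after another conjugation. This confirms the conjugacy explicitly.

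Therefore the action is gpnf on genus $\sigma^{0}_{hyp}(G)$, and $\sigma^{pnf}_{hyp}(G)=\sigma^{0}_{hyp}(G)$.
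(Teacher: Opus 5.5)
Your argument for $2\le s\le n-2$ is correct and is essentially the paper's proof: the same generating triple (a cyclic rotation of the paper's $(x,y,x^{-s}y)$) for the signature $(0;2,\frac{2n}{\gcd(n,s+1)},n)$, with gpnf checked via Lemma \ref{lemita1}(2). What you call the main obstacle is immediate, since $yx^{-1}=(xy)^{-1}$ gives $\langle yx^{-1}\rangle=C_{3}$ exactly.

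One correction to your aside on $s=n-1$. The hypothesis never fails for ${\mathbb D}_{n}$, because every maximal cyclic subgroup is conjugate to $\langle x\rangle$, $\langle y\rangle$ or $\langle xy\rangle$. The claimed equality is in fact false in the case of Theorem \ref{teo2}(3); for example, $\sigma^{0}_{hyp}({\mathbb D}_{6})=2<4=\sigma^{pnf}_{hyp}({\mathbb D}_{6})$. So restricting to $s\le n-2$, as the paper's proof also implicitly does, is necessary and not just a choice of reading.
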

\begin{proof}
Let us consider the Fuchsian group
$$\Delta=\langle x_{1},x_{2},x_{3}: x_{1}^{n}=x_{2}^{2}=x_{3}^{\frac{2n}{\gcd(n,s+1)}}=x_{1}x_{2}x_{3}=1\rangle,$$
and the surjective homomorphism (with a torsion-free kernel)
$$\theta:\Delta \to G^{0}_{s}(n): 
\theta(x_{1})=x, \; \theta(x_{2})=y, \; \theta(x_{3})=x^{-s}y.$$

The conformal action  of $G^{0}_{s}(n)$, induced by $\theta$, is produced on a closed Riemann surface of genus $\sigma^{0}_{hyp}(G^{0}_{s}(n))$. As each of the maximal cyclic subgroups $C_{1}$, $C_{2}$ and $C_{3}$ acts with fixed points (by our hypothesis) (and by Lemma \ref{lemita1}), it follows that such a conformal action is gpnf.
\end{proof}

%%%%%%%%%%%%%%%%%%%%%%%
%%%%%%%%%%%%%%%%%%%%%%%
\section{Proof of Theorem \ref{teo1}}
Let $G$ be a countable group. By Greenberg's realization theorem, we may assume that $G$ acts conformally on some Riemann surface $S$. Moreover, if $G$ is finite, then $S$ can be assumed to be a closed Riemann surface of genus at least two. 
If we fix an infinite-type orientable surface $X$ without planar ends and with a self-similar space of ends, then by the results in \cite{APV}, we may assume $S$ to be homeomorphic to $X$.

Let us consider the Riemann orbifold $R=S/G$ and let $\pi:S \to R$ be a Galois (branched) covering with deck group $G$. If the conformal action of $G$ is gpnf, then we are done. So, let us assume this is not the case. 
Let $C_{i}$, where $1 \leq i \leq m$ and $m$ is either an integer or infinity, be a maximal collection of non-conjugate maximal cyclic subgroups of $G$ that act freely on $S$. If $m=\infty$ and $R$ is compact, then we delete a non-branch point from $R$ and its $\pi$-preimages from $S$ (so, for $m=\infty$, we may assume $R$ to be non-compact). 

Let us consider a collection $\{D_{i}\}_{1 \leq i \leq m}$ of pairwise disjoint closed discs on $R$, each one disjoint from the branch locus of $\pi$
(if $m=\infty$, then necessarily these discs approach some of the ends of $R$) .

For each $i$, we proceed to delete the interior of $D_{i}$ and the interior of all the preimages discs $D_{i,1}, D_{i,2}, \ldots $ of $\pi^{-1}(D_{i})$; we denote by $S^{*}$ the resulting surface. We reorder the borders of all these discs $D_{i,j}$ into a disjoint collection $F_{i,1},F_{i,2}\ldots$, each one formed of $n_{i}=|C_{i}|$ discs that are in the same orbit of some of the $G$-conjugates of $C_{i}$.

Next, we may consider a conformal action of $H_{i}={\mathbb Z}_{n_{i}}$ on the Riemann sphere, and again, we delete the interior of the orbit of a small disc (small enough so its $H_{i}$-orbits are pairwise disjoint) to obtain a genus zero bordered surface $T_{i}$ with $n_{i}$ boundaries that are cyclically permuted by $H_{i}$. Then, for each $F_{i,j}$,
we proceed to glue one copy of $T_{i}$ to $S^{*}$, by gluing the $n_{i}$ borders of $T_{i}$ to the $n$ borders of $S^{*}$ belonging to $F_{i,j}$. We do this gluing so that the cyclic action of $H_{i}$ on $T_{i}$ matches the cyclic action of (the conjugates of) $C_{i}$ on each $F_{i,j}$. 

In the end, we obtain a new orientable connected surface $S_{1}$ for which the action of $G$ has the property that each $C_{i}$ acts with fixed points (if the surface $S$ has infinite-type without planar ends, then $S_{1}$ is topologically equivalent to $S$). 

The quotient surface $R_{1}=S_{1}/G$ is obtained from $R$ by adding two branch points, each of order $n_{i}$, to the interior of the disc $D_{i}$. Now, take any Riemann surface structure on $R_{1}$ and lift it to $S_{1}$ to ensure that $G$ acts as a group of conformal automorphisms. The elements of $G$, acting with fixed points originally on $S$, still act with fixed points on $S_{1}$. But now, we have that each $C_{i}$ also acts with fixed points.
The desired result follows now from Lemma \ref{lemita1}.
\qed

%%%%%%%%%%%%%%%%%%%%%%
%%%%%%%%%%%%%%%%%%%%%%
\section{Proof of Theorem \ref{cota}}
Let $S$ be a closed Riemann surface of genus $g \geq 2$, and let $G \leq {\rm Aut}(S)$ be acting gpnf. Le us assume $S/G$ has signature $(\gamma;m_{1},\ldots,m_{r})$. The gpnf-property asserts that every (non-trivial) element of $G$ has a fixed point, so the prime divisors of $|G|$ must belong to the collection of prime divisors of the numbers $m_{1},\ldots,m_{r}$.

\subsection{Case $|G| \geq 24(g-1)$}
As a consequence of the Riemann-Hurwitz formula, the signature of $S/G$ is one of the following \cite{Singerman}:
$$(0;2,3,7), (0;2,3,8), (0;2,4,5), (0;2,3,9), (0;2,3,10), $$ $$(0;2,3,11), (0;2,3,12), (0;2,4,6), (0; 3,3,4).$$

As a consequence of Theorem \ref{grupotriangular}, the only possibilities for $G$ to act purely non-free are for the signatures:
$(0;2,4,5),  (0;2,4,6), (0; 3,3,4).$ For the signature $(0;2,4,5)$: $|G|=40(g-1)$, for $(0;2,4,6)$: $|G|=24(g-1)$, and for $(0;3,3,4)$: $|G|=24(g-1)$.

(1) Let us consider the signature $(0;2,4,5)$. In this case, the gpnf-property ensures that the non-trivial elements of $G$ can only have orders $2, 4$ or $5$. In particular, $|G|=40(g-1)=2^{\alpha}5^{\beta}$, where $\alpha \geq 3$ and $\beta \geq 1$.  So, $(g-1)$ cannot have a prime factor different from $2$ or $5$. Assume $g$ is even; so $\alpha=3$.
As the number of $5$-Sylow subgroups is congruent to $1$ modulo $5$, and divides $|G|$, this number is $1$. Let $K$ be the unique $5$-Sylow subgroup of $G$; so $K \lhd G$. By the property of the orders of the elements of $G$, we have $K \cong {\mathbb Z}_{5}^{\beta}$. The quotient orbifold $S/K$ has a signature of the form $(\gamma;5,\stackrel{n}{\ldots},5)$, where $n \geq 1$. The quotient group $G/K$, of order $2^{\alpha}$, acts gpnf on the underlying Riemann surface $R$ of $S/K$. The quotient of $R/(G/K)$ will have signature $(0;2,4)$, a contradiction.

(2) Let us now consider the signature $(0;3,3,4)$. In this case, the gpnf-property ensures that the non-trivial elements of $G$ can only have orders $2, 3$ or $4$. In particular, $|G|=24(g-1)=2^{\alpha}3^{\beta}$, where $\alpha \geq 3$ and $\beta \geq 1$. So, necessarily $(g-1)$ cannot have a prime factor different from $2$ or $3$.

(3) Finally, let us now consider the signature $(0;2,4,6)$. In this case, the gpnf-property ensures that the non-trivial elements of $G$ can only have orders $2, 3, 4$ or $6$. In particular, $|G|=24(g-1)=2^{\alpha}3^{\beta}$, where $\alpha \geq 3$ and $\beta \geq 1$. So, necessarily $(g-1)$ cannot have a prime factor different from $2$ or $3$.

\subsection{Case $8(g-1)<|G|<24(g-1)$}
Again, the possible signatures for $S/G$ are given in \cite{Singerman}. We now discuss each one.

(1) The signature $(0;2,2,2,3)$ is produced for $|G|=12(g-1)$ and, by the gpnf-property $g-1=2^{\alpha}3^{\beta}$.

(2) For signature $(0;2,4,n)$, where $7 \leq n <36$, we have $8(g-1)<|G|=\left(\frac{8n}{n-4}\right)(g-1)\leq 9(g-1)$. 
The signature $(0;2,4,7)$ is produced for $|G|=\frac{56}{9}(g-1)$, so $9$ divides $(g-1)$. By the gpnf-property $g-1=2^{\alpha}3^{\beta}7^{\gamma}$.
The signature $(0;2,4,8)$ is produced for $|G|=16(g-1)$, and by the gpnf-property $g-1=2^{\alpha}$.
The signature $(0;2,4,9)$ is produced for $|G|=\frac{72}{5}(g-1)$, so $5$ divides $(g-1)$. By the gpnf-property $g-1=2^{\alpha}3^{\beta}5^{\gamma}$.
The signature $(0;2,4,10)$ is produced for $|G|=\frac{40}{3}(g-1)$, so $3$ divides $(g-1)$. By the gpnf-property $g-1=2^{\alpha}3^{\beta}5^{\gamma}$.
The signature $(0;2,4,11)$ is produced for $|G|=\frac{88}{7}(g-1)$, so $7$ divides $(g-1)$. By the gpnf-property $g-1=2^{\alpha}7^{\beta}11^{\gamma}$.
The signature $(0;2,4,12)$ is produced for $|G|=12(g-1)$. By the gpnf-property $g-1=2^{\alpha}3^{\beta}$.
The signature $(0;2,4,13)$ is produced for $|G|=\frac{104}{9}(g-1)$, so $9$ divides $(g-1)$. By the gpnf-property $g-1=2^{\alpha}3^{\beta}13^{\gamma}$.
The signature $(0;2,4,14)$ is produced for $|G|=\frac{56}{5}(g-1)$, so $5$ divides $(g-1)$. By the gpnf-property $g-1=2^{\alpha}5^{\beta}7^{\gamma}$.
The signature $(0;2,4,15)$ is produced for $|G|=\frac{120}{11}(g-1)$, so $11$ divides $(g-1)$. By the gpnf-property $g-1=2^{\alpha}3^{\beta}5^{\gamma}11^{\delta}$.
The signature $(0;2,4,16)$ is produced for $|G|=\frac{32}{3}(g-1)$, so $3$ divides $(g-1)$. By the gpnf-property $g-1=2^{\alpha}3^{\beta}$.
The signature $(0;2,4,17)$ is produced for $|G|=\frac{136}{13}(g-1)$, so $13$ divides $(g-1)$. By the gpnf-property $g-1=2^{\alpha}13^{\beta}17^{\gamma}$.
The signature $(0;2,4,18)$ is produced for $|G|=\frac{72}{7}(g-1)$, so $7$ divides $(g-1)$. By the gpnf-property $g-1=2^{\alpha}3^{\beta}7^{\gamma}$.
The signature $(0;2,4,19)$ is produced for $|G|=\frac{152}{15}(g-1)$, so $15$ divides $(g-1)$. By the gpnf-property $g-1=2^{\alpha}3^{\beta}5^{\gamma}19^{\delta}$.
The signature $(0;2,4,20)$ is produced for $|G|=10(g-1)$. By the gpnf-property $g-1=2^{\alpha}5^{\beta}$.
The signature $(0;2,4,21)$ is produced for $|G|=\frac{168}{17}(g-1)$, so $17$ divides $(g-1)$. By the gpnf-property $g-1=2^{\alpha}3^{\beta}7^{\gamma}17^{\delta}$.
The signature $(0;2,4,22)$ is produced for $|G|=\frac{88}{9}(g-1)$, so $9$ divides $(g-1)$. By the gpnf-property $g-1=2^{\alpha}3^{\beta}11^{\gamma}$.
The signature $(0;2,4,23)$ is produced for $|G|=\frac{184}{19}(g-1)$, so $19$ divides $(g-1)$. By the gpnf-property $g-1=2^{\alpha}19^{\beta}23^{\gamma}$.
The signature $(0;2,4,24)$ is produced for $|G|=\frac{48}{5}(g-1)$, so $5$ divides $(g-1)$. By the gpnf-property $g-1=2^{\alpha}3^{\beta}5^{\gamma}$.
The signature $(0;2,4,25)$ is produced for $|G|=\frac{200}{21}(g-1)$, so $21$ divides $(g-1)$. By the gpnf-property $g-1=2^{\alpha}3^{\beta}5^{\gamma}7^{\delta}$.
The signature $(0;2,4,26)$ is produced for $|G|=\frac{104}{11}(g-1)$, so $11$ divides $(g-1)$. By the gpnf-property $g-1=2^{\alpha}11^{\beta}13^{\gamma}$.
The signature $(0;2,4,27)$ is produced for $|G|=\frac{216}{23}(g-1)$, so $23$ divides $(g-1)$. By the gpnf-property $g-1=2^{\alpha}3^{\beta}23^{\gamma}$.
The signature $(0;2,4,28)$ is produced for $|G|=\frac{28}{3}(g-1)$, so $3$ divides $(g-1)$. By the gpnf-property $g-1=2^{\alpha}3^{\beta}7^{\gamma}$.
The signature $(0;2,4,29)$ is produced for $|G|=\frac{232}{25}(g-1)$, so $5$ divides $(g-1)$. By the gpnf-property $g-1=2^{\alpha}5^{\beta}29^{\gamma}$.
The signature $(0;2,4,30)$ is produced for $|G|=\frac{120}{13}(g-1)$, so $13$ divides $(g-1)$. By the gpnf-property $g-1=2^{\alpha}3^{\beta}5^{\gamma}13^{\delta}$.
The signature $(0;2,4,31)$ is produced for $|G|=\frac{248}{27}(g-1)$, so $27$ divides $(g-1)$. By the gpnf-property $g-1=2^{\alpha}3^{\beta}31^{\gamma}$.
The signature $(0;2,4,32)$ is produced for $|G|=\frac{64}{7}(g-1)$, so $7$ divides $(g-1)$. By the gpnf-property $g-1=2^{\alpha}7^{\beta}$.
The signature $(0;2,4,33)$ is produced for $|G|=\frac{264}{29}(g-1)$, so $29$ divides $(g-1)$. By the gpnf-property $g-1=2^{\alpha}3^{\beta}11^{\gamma}29^{\delta}$.
The signature $(0;2,4,34)$ is produced for $|G|=\frac{136}{15}(g-1)$, so $15$ divides $(g-1)$. By the gpnf-property $g-1=2^{\alpha}3^{\beta}5^{\gamma}$.
The signature $(0;2,4,35)$ is produced for $|G|=\frac{280}{31}(g-1)$, so $31$ divides $(g-1)$. By the gpnf-property $g-1=2^{\alpha}5^{\beta}7^{\gamma}31^{\delta}$.

(3) For signature of the form $(0;2,5,n)$, where $5 \leq n < 20$, one has that $8(g-1) <|G|=\left(\frac{20n}{3n-10}\right)(g-1)<24(g-1)$.

The signature $(0;2,5,5)$ is produced for $|G|=20(g-1)$. By the gpnf-property $g-1=2^{\alpha}5^{\beta}$.
The signature $(0;2,5,6)$ is produced for $|G|=15(g-1)$. By the gpnf-property $g-1=2^{\alpha}3^{\beta}5^{\gamma}$.
The signature $(0;2,5,7)$ is produced for $|G|=\frac{140}{11}(g-1)$, so $11$ divides $(g-1)$. By the gpnf-property $g-1=2^{\alpha}5^{\beta}7^{\gamma}11^{\delta}$.
The signature $(0;2,5,8)$ is produced for $|G|=\frac{80}{7}(g-1)$, so $7$ divides $(g-1)$. By the gpnf-property $g-1=2^{\alpha}5^{\beta}7^{\gamma}$.
The signature $(0;2,5,9)$ is produced for $|G|=\frac{180}{7}(g-1)$, so $7$ divides $(g-1)$. By the gpnf-property $g-1=2^{\alpha}3^{\beta}5^{\gamma}7^{\delta}$.
The signature $(0;2,5,10)$ is produced for $|G|=10g-1)$. By the gpnf-property $g-1=2^{\alpha}5^{\beta}$.
The signature $(0;2,5,11)$ is produced for $|G|=\frac{220}{23}(g-1)$, so $23$ divides $(g-1)$. By the gpnf-property $g-1=2^{\alpha}5^{\beta}11^{\gamma}23^{\delta}$.
The signature $(0;2,5,12)$ is produced for $|G|=\frac{120}{13}(g-1)$, so $13$ divides $(g-1)$. By the gpnf-property $g-1=2^{\alpha}3^{\beta}5^{\gamma}13^{\delta}$.
The signature $(0;2,5,13)$ is produced for $|G|=\frac{260}{29}(g-1)$, so $29$ divides $(g-1)$. By the gpnf-property $g-1=2^{\alpha}5^{\beta}13^{\gamma}29^{\delta}$.
The signature $(0;2,5,14)$ is produced for $|G|=\frac{35}{4}(g-1)$, so $4$ divides $(g-1)$. By the gpnf-property $g-1=2^{\alpha}5^{\beta}7^{\gamma}$.
The signature $(0;2,5,15)$ is produced for $|G|=\frac{60}{7}(g-1)$, so $7$ divides $(g-1)$. By the gpnf-property $g-1=2^{\alpha}3^{\beta}5^{\gamma}7^{\delta}$.
The signature $(0;2,5,16)$ is produced for $|G|=\frac{160}{19}(g-1)$, so $19$ divides $(g-1)$. By the gpnf-property $g-1=2^{\alpha}5^{\beta}19^{\gamma}$.
The signature $(0;2,5,17)$ is produced for $|G|=\frac{340}{41}(g-1)$, so $41$ divides $(g-1)$. By the gpnf-property $g-1=2^{\alpha}5^{\beta}17^{\gamma}41^{\delta}$.
The signature $(0;2,5,18)$ is produced for $|G|=\frac{90}{11}(g-1)$, so $11$ divides $(g-1)$. By the gpnf-property $g-1=2^{\alpha}3^{\beta}5^{\gamma}11^{\delta}$.
The signature $(0;2,5,19)$ is produced for $|G|=\frac{380}{47}(g-1)$, so $47$ divides $(g-1)$. By the gpnf-property $g-1=2^{\alpha}5^{\beta}19^{\gamma}47^{\delta}$.

(4) For signature of the form $(0;3,3,n)$, where $5 \leq n < 12$, one has that $8(g-1) <|G|=\left(\frac{6n}{n-3}\right)(g-1)<24(g-1)$.

The signature $(0;3,3,5)$ is produced for $|G|=15(g-1)$. By the gpnf-property $g-1=3^{\alpha}5^{\beta}$.
The signature $(0;3,3,6)$ is produced for $|G|=12(g-1)$. By the gpnf-property $g-1=2^{\alpha}3^{\beta}$.
The signature $(0;3,3,7)$ is produced for $|G|=\frac{21}{2}(g-1)$, so $2$ divides $(g-1)$. By the gpnf-property $g-1=2^{\alpha}3^{\beta}7^{\gamma}$.
The signature $(0;3,3,8)$ is produced for $|G|=\frac{48}{5}(g-1)$, so $5$ divides $(g-1)$. By the gpnf-property $g-1=2^{\alpha}3^{\beta}5^{\gamma}$.
The signature $(0;3,3,9)$ is produced for $|G|=9(g-1)$. By the gpnf-property $g-1=3^{\alpha}$.
The signature $(0;3,3,10)$ is produced for $|G|=\frac{60}{7}(g-1)$, so $7$ divides $(g-1)$. By the gpnf-property $g-1=2^{\alpha}3^{\beta}5^{\gamma}7^{\delta}$.
The signature $(0;3,3,11)$ is produced for $|G|=\frac{33}{4}(g-1)$, so $4$ divides $(g-1)$. By the gpnf-property $g-1=2^{\alpha}3^{\beta}11^{\gamma}$.

(5) For signature of the form $(0;3,4,n)$, where $n \in \{4,5\}$, one has that $8(g-1) <|G|=\left(\frac{24n}{5n-12}\right)(g-1)<24(g-1)$.

The signature $(0;3,4,4)$ is produced for $|G|=12(g-1)$. By the gpnf-property $g-1=2^{\alpha}3^{\beta}$.
The signature $(0;3,4,5)$ is produced for $|G|=\frac{120}{13}(g-1)$, so $13$ divides $(g-1)$. By the gpnf-property $g-1=2^{\alpha}3^{\beta}5^{\gamma}13^{\delta}$.

%%%%%%%%%%%%%%%%%%%%%%%
%%%%%%%%%%%%%%%%%%%%%%%%
\section{Proof of Theorem \ref{teo2}}
%%%%%%%%%%%%%
\subsection{Proof of part (1): $n=3$}
Let us observe that the Riemann surface $S$ of genus two defined by the algebraic equation ($a\in {\mathbb C}\setminus \{0,1,\omega_{3}, \omega_{3}^{2}\}$, where $\omega_{3}=e^{2\pi i/3}$)
$$S: W^{2}=(U^{3}-1)(U^{3}-a^{3})$$
admits the group of conformal automorphisms
$$\langle x(U,W)=(\omega_{3}U,W), \; y(U,W)=(\frac{a}{U},\frac{W}{U^{3}})\rangle \cong {\mathbb D}_{3},$$
which acts gpnf. So, $\sigma^{pnf}_{hyp}({\mathbb D}_{3})=2$.

%%%%%%%%%%%%%%
\subsection{Proof of parts (2) and (3): Case $n \geq 4$}
Let $n=p_{1}^{\alpha_{1}}q \geq 3$, where $p_{1}$ is the smallest prime divisor of $n$, $\gcd(p_{1},q)=1$, and $\alpha_{1} \geq 1$.

Let us recall that ${\mathbb D}_{n}=G^{0}_{n-1}(n)=\langle x, y : x^{n}=y^{2}=(xy)^{2}=1\rangle$, and 

$$\sigma^{0}_{hy}({\mathbb D}_{n})=\left\{\begin{array}{ll}
n+1-p_{1}-\dfrac{n}{p_{1}}, & \alpha_{1}=1, \; q \geq 2.\\
n-\dfrac{n}{p_{1}}, & \mbox{if either $\alpha_{1}\geq 2$, or $\alpha_{1}=q=1$}.
\end{array}
\right.
$$

Moreover, if $S$ is a closed Riemann surface admitting ${\mathbb D}_{n}$ as a group of conformal automorphisms, and $S$ has the minimal genus $\sigma^{0}_{hy}({\mathbb D}_{n})$, then $S/{\mathbb D}_{n}$ has signature:
$$\left\{\begin{array}{ll}
\left(0;2,2,p_{1},\frac{n}{p_{1}}\right), & \alpha_{1}=1, \; q \geq 2.\\
\left(0;2,2,p_{1},n\right), & \mbox{if either $\alpha_{1}\geq 2$, or $q=1$}.
\end{array}
\right\}.
$$

The maximal cyclic subgroups of ${\mathbb D}_{n}$ are $\langle x \rangle \cong {\mathbb Z}_{n}$, and the order two cyclic groups $\langle x^{l}y\rangle$, where $l \in \{0,1,\ldots,n-1\}$. Up to conjugation, one has the following maximal cyclic groups:
\begin{enumerate}[leftmargin=15pt]
\item $\langle x \rangle$ and $\langle y \rangle$, if $n$ is odd; and
\item $\langle x \rangle$, $\langle y \rangle$ and $\langle xy \rangle$, if $n$ is even.
\end{enumerate}

In particular, if ${\mathbb D}_{n}$ acts gpnf on a Riemann surface of genus $g \geq 2$, then (by Proposition \ref{propo1}) the signature of $S/{\mathbb D}_{n}$ must be of the form
$$(\gamma;n,\stackrel{d_{1}}{\cdots},n,2,\stackrel{d_{2}}{\cdots},2,k_{1},\ldots,k_{r}),$$
where $d_{1},d_{2} \geq 1$, and $k_{i} \geq 2$ are odd divisors of $n$, and (by the Riemann-Hurwitz formula)
$$\frac{1}{n} \leq \frac{g-1}{n}=2(\gamma-1)+d_{1}(1-\frac{1}{n})+\frac{d_{2}}{2}+\sum_{j=1}^{r}\left(1-\frac{1}{k_{j}}\right).$$

%%%%%%%%%%%%%%
\subsubsection{} Let us first consider either of the following cases:
\begin{enumerate}[leftmargin=15pt]
\item[(i)] $\alpha_{1} \geq 2$;
\item[(ii)] $\alpha=1$ and $q=1$;
\item[(iii)] $\alpha_{1}=1$, $q \geq 2$ and $n \geq 3$ odd.
\end{enumerate}

Let us consider a Fuchsian group with signature $(0;n,p_{1},2,2)$
\begin{equation}\label{Delta1}
\Delta_{1}=\langle x_{1},x_{2},x_{3},x_{4}:x_{1}^{n}=x_{2}^{p_{1}}=x_{3}^{2}=x_{4}^{2}=x_{1}x_{2}x_{3}x_{4}=1\rangle,
\end{equation}
and the surjective homomorphism
$\theta_{1}:\Delta_{1} \to {\mathbb D}_{n}$,
defined by $\theta_{1}(x_{1})=x, \; \theta_{1}(x_{2})=x^{m}, \; \theta_{1}(x_{3})=y, \; \theta_{1}(x_{4})=x^{1+m}y$ (where $m=n/p_{1}=p_{1}^{\alpha_{1}-1}q$).
Since $\theta_{1}$ preserves the orders, its kernel $\Gamma_{1} \lhd \Delta_{1}$ is torsion-free. So, $S_{1}={\mathbb H}^{2}/\Gamma_{1}$ is closed Riemann surface admitting ${\mathbb D}_{n}$ as group of conformal automorphisms and with $S_{1}/{\mathbb D}_{n}$ of signature $(0;n,p_{1},2,2)$. So, by the Riemann-Hurwitz formula, the genus of $S_{1}$ is $\sigma^{0}_{hyp}({\mathbb D}_{n})$.

Let us first assume $n \geq 3$ is odd. In this case, all the maximal cyclic subgroups of ${\mathbb D}_{n}$, different from $\langle x \rangle$, are conjugate to $\langle y \rangle$. As a consequence of Lemma \ref{lemita1}, the above conformal action is gpnf.
Next, let us assume $n \geq 4$ is even. In this case, $p_{1}=2$. Since $\alpha_{1} \geq 2$, $m=n/p_{1}$ is also even. This asserts that $x^{1+m}y$ is conjugate to $xy$. So, again by Lemma \ref{lemita1}, the conformal action is gpnf.

%%%%%%%%%%%%%%
\subsubsection{}
Let us now assume $n \geq 4$ even and $\alpha_{1}=1$; so $p_{1}=2$ and $q\geq 3$ odd. Let $q_{1}$ the smallest prime divisor of $q$. Let us now consider a Fuchsian group of signature $(0;2,2,q_{1},n)$
\begin{equation}\label{Delta2}
\Delta_{2}=\langle x_{1},x_{2},x_{3},x_{4}:x_{1}^{n}=x_{2}^{q_{1}}=x_{3}^{2}=x_{4}^{2}=x_{1}x_{2}x_{3}x_{4}=1\rangle,
\end{equation}
and the surjective homomorphism (where $m=n/q_{1}$)
$\theta_{2}:\Delta_{2} \to {\mathbb D}_{n}$,
defined by $$\theta_{2}(x_{1})=x, \; \theta_{2}(x_{2})=x^{m}, \; \theta_{2}(x_{3})=y, \; \theta_{2}(x_{4})=x^{1+m}y.$$

As before, the kernel $\Gamma_{2} \lhd \Delta_{2}$ of $\theta_{2}$ is torsion-free. So, $S_{2}={\mathbb H}^{2}/\Gamma_{2}$ is closed Riemann surface admitting ${\mathbb D}_{n}$ as group of conformal automorphisms and with $S_{2}/{\mathbb D}_{n}$ of signature $(0;2,2,q_{1},n)$. So, by the Riemann-Hurwitz formula, the genus of $S_{2}$ is $n-n/q_{1}$. As $1+n/q_{1}$ is odd, $x^{1+n/q_{1}}y$ is conjugate to $xy$. So, by Lemma \ref{lemita1}, the conformal action is gpnf.

To finish, we only need to observe that there is no gpnf-action of ${\mathbb D}_{n}$ with signature $(0;2,2,p_{1},n)$ (as this is the only possible signature of a gpnf-action for ${\mathbb D}_{n}$ with lower hyperbolic area). This follows from the fact that for any surjective homomorphism $\theta:\Delta_{1} \to {\mathbb D}_{n}$ there is an automorphism $\phi \in {\rm Aut}({\mathbb D}_{n})$ such that $\phi \circ \theta=\theta_{1}$. But in this case, $xy$ acts free fixed points.

%%%%%%%%%%%%%%%%
%%%%%%%%%%%%%%%%%
\section{Proof of Theorem \ref{teo0} and Corollary \ref{DCn}}
\subsection{Proof of Theorem \ref{teo0}}\label{Sec:prueba0}
We will follow the strategy as mentioned in Remark \ref{estrategia}.
As we already known the case for $G$ either ${\mathbb Z}_{n}$, ${\mathbb Z}_{2}^{2}$, ${\mathbb D}_{n}$, ${\mathcal A}_{4}$, and ${\mathcal A}_{5}$ and ${\mathfrak S}_{4}$ (in which case $\sigma^{pnf}(G)=0$), we only need to assume $G$ is different from any of these groups.

Set $L=L_{G}$. Let $2 \leq m_{1} \leq \cdots \leq m_{L} \leq n$ be the orders of a maximal collection of pairwise non-conjugate maximal cyclic subgroups.
By Proposition \ref{propo1}, every gpnf-action of $G$ must have quotient signature of the form
$$(*) \; (\gamma;m_{1},\ldots,m_{L},k_{1},\ldots,k_{r}).$$

Let us observe that each tuple $(B_{1},\ldots,B_{L}) \in {\mathcal F}$ permits us to construct (by following the same arguments as for the proof of Theorem \ref{teogpnf}) a gpnf-action of $G$ on a closed Riemann surface of genus 
$$g_{B}=1+n\left(L-1-\frac{1}{m_{B}}-\sum_{i=1}^{L}\frac{1}{|B_{i}|} \right),$$
whose quotient signature is $(0;m_{1},\ldots,m_{L},m_{B})$. We may observe that each possible signature of the form $(*)$ produces a genus bigger (if $\gamma \geq 1$ or if $\gamma=0$ and $r \geq 2$) than one of the values $g_{B}$. Now, as defined $B_{0}$, we may observe that $g_{B_{0}} \leq g_{B}$.

%%%%%%%%%%%%%%%%
\subsection{Proof of Corollary \ref{DCn}}
Let us recall that, for every integer $n \geq 2$, the dicyclic group of order $4n$ is the group with the following presentation:
$$DC_{n}=\langle x, y : x^{2n}=1, y^{2}=x^{n}, y^{-1}xy=x^{-1}\rangle.$$

Note that $DC_{2}=Q_{8}$. It follows from the above presentation that $|x|=2n$, $|y|=|xy|=4$, that every non-trivial element of $DC_{n}$ has the form $x^{l}$, where $1 \leq l \leq 2n-1$, or 
$x^{l}y$, where $0 \leq l \leq 2n-1$.  Since $x^{l}yx^{-l}=x^{2l}y$ and $x^{l} (xy) x^{-l}=x^{2l+1}y$, we may observe that 
every maximal cyclic subgroup is conjugated to either $C_{1}=\langle x \rangle \cong {\mathbb Z}_{2n}$ (a normal subgroup), $C_{2}=\langle y \rangle \cong {\mathbb Z}_{4}$, or $C_{3}=\langle xy \rangle \cong {\mathbb Z}_{4}$. Note that $C_{1}$ and $C_{2}$ are non-conjugate. If $n \geq 2$ is even, then $C_{1}$, $C_{2}$, and $C_{3}$ are non-conjugate, so $L_{DC_{n}}=3$. In $n \geq 3$ is odd, then $C_{2}$ and $C_{3}$ are conjugate, so $L_{DC_{n}}=2$.
Since $x \cdot y \cdot (xy)^{-1}=1$, we observe that, for $n$ even one has that $m_{DC_{n}}=1$, and for $n$ odd, one has that $m_{DC_{n}}=4$.
As a consequence of Theorem \ref{teo0}, $\sigma^{pnf}_{hyp}({DC_{n}})=n$. The quotient signature is $(0;2n,4,4)$.

%%%%%%%%%%%%%%%%%%%%%%%
%%%%%%%%%%%%%%%%%%%%%%%%
\section{Proof of Theorem \ref{teo3}}
Let us recall that we are assuming $n\geq 3$,  and $2 \leq s \leq n-2$ be such that $s^{2} \equiv 1 \mod(n)$. 
The condition $2 \leq s \leq n-2$ asserts that $n \geq 4$ is not a prime integer. Let us also recall that in this case
$$\sigma^{0}_{hyp}(G^{0}_{s}(n))=\frac{n-\gcd(n,s+1)}{2},$$
and if $S$ is a closed Riemann surface of that genus admitting a conformal action by $G^{0}_{s}(n)$, then $S/G^{0}_{s}(n)$ has signature 
$$\left(0;2,\frac{2n}{\gcd(n,s+1)},n\right).$$

One such minimal action is produced by the following surjective homomorphism with torsion-free kernel:
$$\theta:\Delta=\langle x_{1},x_{2},x_{3}:x_{1}^{n}=x_{2}^{2}=x_{3}^{\frac{2n}{\gcd(n,s+1)}}=x_{1}x_{2}x_{3}=1\rangle \to G^{0}_{s}(n),$$
$$\theta(x_{1})=x, \; \theta(x_{2})=y, \; \theta(x_{3})=x^{-s}y.$$

%%%%%%%%%%%%%%%
\subsection{Topological rigidity}
Let us observe that every conformal action of the group $G^{0}_{s}(n)$ on a closed Riemann surface of genus $\sigma^{0}_{hyp}(G^{0}_{s}(n))$ is determined by a surjective homomorphism $\eta:\Delta \to G^{0}_{s}(n)$, with a torsion-free kernel. To obtain the topological rigidity property, we need to find $\phi \in {\rm Aut}(G^{0}_{s}(n))$ and $\psi \in {\rm Aut}(\Delta)$ such that $\theta=\phi \circ \eta \circ \psi$.
It follows that  $\eta(x_{1})$ has order $n$, that $\eta(x_{2})$ has order two (which, by surjectivity of $\eta$, cannot belong to $\langle x \rangle$). In this way, 
\begin{enumerate}[leftmargin=15pt]
\item $\eta(x_{2})=x^{v}y$, where $v \in \{0,1,\ldots,n-1\}$, and $v(s+1) \equiv 0 \mod(n)$, and 
\item (by Lemma \ref{lemita2}) either (i) $\eta(x_{1})=x^{l}$ or (ii) $\eta(x_{1})=x^{l}y$, where $l \in \{1,\ldots,n-1\}$ and $\gcd(n,l)=1$.
\end{enumerate}

%%%%%%%%%
\subsubsection{}
Let us first consider the case that $\eta(x_{1})=x^{l}$, where $l \in \{1,\ldots,n-1\}$ and $\gcd(n,l)=1$. In this case, if we consider $\psi=I$, and $\phi \in {\rm Aut}(G^{0}_{s}(n))$ defined by $\phi(x)=x^{l}, \; \phi(y)=x^{v}y,$
then $(\phi^{-1} \circ \eta)(x_{1})=x$, and $(\phi^{-1} \circ \eta)(x_{2})=y$, and we are done.

%%%%%%%%%
\subsubsection{}
Let us now consider the case that $\eta(x_{1})=x^{l}y$, where $l \in \{1,\ldots,n-1\}$ and $\gcd(n,l)=1$.  In this case, if we consider $\psi=I$, and $\phi \in {\rm Aut}(G^{0}_{s}(n))$ defined by $\phi(x)=x^{l}y, \; \phi(y)=x^{v}y,$
then $(\phi^{-1} \circ \eta)(x_{1})=x$, and $(\phi^{-1} \circ \eta)(x_{2})=y$, and we are done.

%%%%%%%%%%%%%%%
\subsection{Algebraic models}
Let $S$ be a closed Riemann surface of genus $\sigma^{0}_{hyp}(G^{0}_{s}(n))$ admitting the conformal action of $G^{0}_{s}(n)$. Since $S/G^{0}_{s}(n)$ has signature $\left(0;2,\frac{2n}{\gcd(n,s+1)},n\right)$, and the orbifold $S/\langle x \rangle$ admits a conformal involution $\hat{y}$ such that $S/G^{0}_{s}(n)=(S/\langle x \rangle)/\langle \hat{y}\rangle$, we may observe that 
$S/\langle x \rangle$ has signature $(0;n,n,\frac{n}{\gcd(n,s+1)})$, where the cone point of order $\frac{n}{\gcd(n,s+1)}$ is fixed by $\hat{y}$, and the two of order $n$ are permuted by it. So, we may identify $S/\langle x \rangle$ with the Riemann sphere $\widehat{\mathbb C}$ and its cone points to be $0$ (the one of order $\frac{n}{\gcd(n,s+1)}$), and $\pm 1$ (the two of order $n$. In this case, $\hat{y}(U)=-U$. This permits us to see that $S$ is a ${\mathbb Z}_{n}$-cyclic gonal curve and that an algebraic model for it has the form
$$W^{n}=U^{\gcd(n,s+1)}(U-1)^{\alpha}(U+1)^{s\alpha},$$
where $\alpha \in \{1,\ldots,n-1\}$ is such that $\gcd(n,\alpha)=1$ and $\gcd(n,s+1)+\alpha(s+1) \equiv 0 \mod(n)$.

Let us write $s+1=t \gcd(n,s+1)$, where $\gcd(n,t)=1$. If $t\beta \equiv 1 \mod(n)$, then $\alpha=n-\beta$ works.
Now, the choices of $x(U,W)$ and $y(U,E)$ as stated in the theorem are seen to be automorphisms of the above algebraic curve and that they generate $G^{0}_{s}(n)$.

%%%%%%%%%%%%%%%
\subsection{The case $n$ odd: $\sigma^{pnf}_{hyp}(G^{0}_{s}(n))=\sigma^{0}_{hyp}(G^{0}_{s}(n))$}
Now, under the assumption that $n$ is odd, we proceed to see that the conformal action of $G^{0}_{s}(n)$ on the Riemann surface $S$ as before is gpnf. 
Let us observe that this case is also proved by Proposition \ref{casosimple} once we have that $\langle x \rangle$, $\langle y \rangle$ and $\langle xy \rangle$ form a maximal collection of pairwise non-conjugate maximal cyclic subgroups. 
We already know that $x$ has fixed points. So, we need to check that every element of the form $x^{l}y$ has a fixed point.  
Let us denote by $\pi_{n}:S \to S/\langle x \rangle$ the branched regular covering with deck group $\langle x \rangle$.  As above, let $0$ be the branch value of $\pi_{n}$ of order $\frac{n}{\gcd(n,s+1)}$.
Since $F:=\pi_{n}^{-1}(0)$ is the set of fixed points of $x^{\gcd(n,s+1)}$, it is invariant under $\langle x \rangle$. As $\hat{y}(0)=0$, it follows that this set is also invariant under $y$. So, $F$ is invariant under $G^{0}_{s}(n)$. The cardinality of $F$ is $\gcd(n,s+1)$. So, as $n$ is odd, $F$ has odd cardinality. 
As $x^{l}y$ has even order, say $\frac{2n}{\gcd(n,s+1)}$, and it keeps $F$ invariant, necessarily it must have a fixed point in $F$.

%%%%%%%%%%%%%%%%
\subsection{The case $n$ even in part (3)}
This is a consequence of Proposition \ref{casosimple}.

%%%%%%%%%%%%%%%%
%%%%%%%%%%%%%%%%
\section{GAP implementation}\label{Sec:GAP}
If $G$ is neither a cyclic group, nor ${\mathbb Z}_{2}^{2}$, nor a dihedral group, nor ${\mathcal A}_{4}$, nor ${\mathcal A}_{5}$, nor ${\mathfrak S}_{4}$, then 
$\sigma^{pnf}_{hyp}(G)=\sigma^{pnf}(G)$. For the left groups $\sigma^{pnf}(G)=0$.
Theorem \ref{teo0} permits us to implement the computation of $\sigma^{pnf}(G)$ in the GAP computational Program \cite{GAP}. 

\medskip
\noindent
{\small
\fbox{\parbox{\textwidth}{
{\bf The function ${\rm gpnf}(G)$:} For each finite group $G$, it is computed the parameter $\sigma^{gpnf}(G)$ together with the quotient signature of that gpnf-action.

\medskip\hrule\medskip

{\bf Input:} The finite group $G$.

{\bf Output:} $\sigma^{pnf}(G)$ and the quotient signature.

{\bf The algorithm:}
\begin{enumerate}[leftmargin=15pt]
\item Compute the maximal cyclic subgroups of $G$.
\item Make a list $C=[C_{1},\ldots,C_{L_{G}}]$, whose elements $C_{i}$ are the lists of the conjugacy classes of maximal cyclic subgroups.
\item Make the list $\Delta=[m_{1},\ldots,m_{L_{G}}]$, where $m_{j}$ is the order of a maximal cyclic subgroup contained in $C_{i}$.
\item Make the list $W=[W_{1},\ldots,W_{L_{G}}]$, where each $W_{i}$ is the list of all generators of all the maximal cyclic subgroups in $C_{i}$.
\item For each $b=[w_{1},\ldots,w_{L_{G}}]$, where $w_{i} \in W_{i}$, compute the order $m_{b}$ of $w_{1}\cdots w_{L_{G}}$.
\item Compute the minimal value $m_{G}$ of $m_{b}$, for $b$ running on all the above tuples.
\item If $m_{G} \geq 2$,  then add $m_{G}$ to the list $\Delta$.
\item Use the signature $(0;m_{1},\ldots,m_{L_{G}},m_{G})$ and the Riemann-Hurwitz formula to obtain $\sigma^{pnf}(G)$.
\end{enumerate}
}}
}

%%%%%%%%%%%%%%%
%%%%%%%%%%%%%%%%
\section{Some examples}\label{Sec:ejemplos}
%%%%%%%%%%%%%%%%
\subsection{Examples for groups of order $8$}
There are, up to isomorphism, five groups of order $8$: $G_{1}={\mathbb Z}_{8}$, $G_{2}={\mathbb Z}_{2} \times {\mathbb Z}_{4}$, $G_{3}={\mathbb D}_{4}$, $G_{4}=Q_{8}$ and $G_{5}={\mathbb Z}_{2}^{3}$. For these groups, the $\sigma^{pnf}_{hyp}$-values are given as follows.
(1) $\sigma^{pnf}_{hyp}({\mathbb Z}_{8})=2$, and quotient signature $(0;2,8,8)$.
(2) $\sigma^{pnf}_{hyp}({\mathbb Z}_{2} \times {\mathbb Z}_{4})=5$, and quotient signature $(0;2,2,2,4,4)$.
(3) $\sigma^{pnf}_{hyp}({\mathbb D}_{4})=2$, and quotient signature $(0;2,2,2,4)$. 
(4) $\sigma^{pnf}_{hyp}(Q_{8})=2$, and quotient signature $(0;4,4,4)$. 
(5) $\sigma^{pnf}_{hyp}({\mathbb Z}_{2}^{3})=7$, and quotient signature $(0;2,2,2,2,2,2,2)$.

%%%%%%%%%%%%%%%%%%%%%%%
\subsection{The groups ${\mathfrak S}_{5}$ and ${\mathcal A}_{6}$}
It is known that $\sigma^{0}_{hyp}({\mathfrak S}_{5})=4$ (corresponding to a conformal action of the so-called Bring's curve \cite{Edge})
This group has $31$ maximal cyclic subgroups, which form $3$ conjugacy classes whose representatives have orders $4$, $5$, and $6$, respectively.
By applying Theorem \ref{teo0} to this group, we obtain that $\sigma^{pnf}_{hyp}({\mathfrak S}_{5})=24$, and with quotient signature $(0;4,5,6)$.
It is known that $\sigma^{0}_{hyp}({\mathcal A}_{6})=10$ \cite{Nathan})
This group has $121$ maximal cyclic subgroups, which form $4$ conjugacy classes whose representatives have orders $3$, $3$, $4$ and $5$, respectively.
By applying Theorem \ref{teo0} to this group, we obtain that $\sigma^{pnf}_{hyp}({\mathcal A}_{6})=160$, and with quotient signature $(0;3,3,4,5)$.

%%%%%%%%%%%%%%%%%%%%%%%
\subsection{The groups with strong symmetric genus two and twenty-four}
In \cite{Nathan}, there are computed the groups $G$ for which $\sigma^{0}(G) \in \{2,\ldots,24\}$. If we want to compute those groups $G$ with $\sigma^{pnf}(G)=g$, then (as $\sigma^{0}(G) \leq \sigma^{pnf}(G)$) we only need to compute the values of $\sigma^{pnf}(K)$ for those with $\sigma^{0}(K) \leq g$. For instance, the following tables consider all those groups $G$ with $\sigma^{0}(G) \in\{2,24\}$.
{\small
$$
(\sigma^{0}(G)=2)\;\;
\begin{array}{|c|c|c|c|}\hline
\mbox{Structure Description of $G$} & \mbox{GAP ID} & \sigma^{pnf}(G) & \mbox{Quotient Signature} \\\hline\hline 
Q_{8} & [8,4] & 2 & (0;4,4,4) \\\hline 
DC_{3} & [12,1] & 3 & (0;4,4,6) \\\hline 
QD_{16} & [16,8] & 2 & (0;2,4,8) \\\hline 
SL(2,3) & [24,3] & 4 & (0;3,4,6) \\\hline 
({\mathbb Z}_{6} \times {\mathbb Z}_{2}) \rtimes {\mathbb Z}_{2} & [24,8] & 12 & (0;2,4,6,6) \\\hline 
GL(2,3) & [48,29] & 6 & (0;2,6,8) \\\hline 
\end{array}
$$
}

{\small
$$
(\sigma^{0}(G)=24)\;\;
\left.\begin{array}{|c|c|c|c|}\hline 
\mbox{Structure Description of $G$} & \mbox{GAP ID} & \sigma^{pnf}(G) & \mbox{Quotient Signature}  \\\hline\hline
 {\mathbb Z}_{9} \rtimes {\mathbb Z}_{8} & [72,1] & 27 & (0;8,8,36) \\\hline
  {\mathbb Z}_{5} \rtimes Q_{16} & [80,18] & 62 & (0;4,8,20,20) \\\hline 
  DC_{24} & [96,8] & 24 & (0;4,4,48) \\\hline 
  {\mathbb Z}_{25} \rtimes {\mathbb Z}_{4} & [100,1] & 25 & (0;4,4,50) \\\hline 
  {\mathbb Z}_{17} \times {\mathfrak S}_{3} & [102,1] & 24 & (0;2,34,51) \\\hline 
  {\mathbb Z}_{13} \times {\mathbb D}_{4} & [104,10] & 48 & (0;26,26,52) \\\hline 
  {\mathbb Z}_{2}^{2} \rtimes {\mathbb Z}_{27} & [108,3] & 48 & (0;18,27,27) \\\hline 
  {\mathbb Z}_{7} \times {\mathbb D}_{8} & [112,24] & 48 & (0;14,14,56) \\\hline 
  {\mathbb Z}_{3} \times (({\mathbb Z}_{10} \times {\mathbb Z}_{2}) \rtimes {\mathbb Z}_{2}) & [120,20] & 102 & (0;6,12,30,30) \\\hline 
  {\mathbb Z}_{5} \times {\mathbb D}_{12} & [120,23] & 48 & (0;10,10,60) \\\hline 
  {\mathbb Z}_{5} \times {\mathfrak S}_{4} & [120,37] & 48 & (0;10,15,20) \\\hline 
  {\mathbb D}_{5} \times {\mathbb D}_{7} & [140,7] & 92 & (0;2,10,14,35) \\\hline 
  ({\mathbb Z}_{9} \times {\mathbb D}_{4}) \rtimes {\mathbb Z}_{2} & [144,16] & 94 & (0;2,8,18,36) \\\hline 
  {\mathfrak S}_{3} \times {\mathbb D}_{13} & [156,11] & 100 & (0;2,6,26,39) \\\hline 
  {\mathbb Z}_{96} \rtimes {\mathbb Z}_{2} & [192,8] & 24 & (0;2,4,96) \\\hline   
  ({\mathbb Z}_{50} \times {\mathbb Z}_{2}) \rtimes {\mathbb Z}_{2} & [200,8] & 122 & (0;2,4,50,50) \\\hline 
  ({\mathbb Z}_{2}^{2} \rtimes {\mathbb Z}_{27}) \rtimes {\mathbb Z}_{2} & [216,21] & 126 & (0;2,4,18,27) \\\hline 
  \end{array}\right.
$$
}

\begin{rema}
Our computations also permit us to observe the following.
\begin{enumerate}
\item There are two groups $G$ with $\sigma^{pnf}(G)=2$: ${\rm SmallGroup}(8,4)=Q_{8}$ and ${\rm SmallGroup}(16,8)=QD_{16}$.
\item There is only one group $G$ with $\sigma^{pnf}(G)=3$: ${\rm SmallGroup}(12,1)=DC_{3}$.
\item There are three groups $G$ with $\sigma^{pnf}(G)=4$: ${\rm SmallGroup}(24,3)=SL(2,3)$, ${\rm SmallGroup}(16,9)=Q_{16}$ and ${\rm SmallGroup}(32,19)=QD_{32}$.
\item There are two groups $G$ with $\sigma^{pnf}(G)=5$: ${\rm SmallGroup}(20,1)=DC_{5}$ and ${\rm SmallGroup}(30,2)={\mathbb Z}_{3} \times {\mathbb D}_{5}$.

\end{enumerate}
\end{rema}

%%%%%%%%%%%%%%%%%
\subsection{Some examples for $G^{0}_{s}(n)$}
Let $n \geq 3$ and $2 \leq s \leq n-2$ be integers such that $s^{2} \equiv 1 \mod(n)$. 

\begin{rema}
If $n=2^{a}p_{1}^{m_{1}}\cdots p_{r}^{m_{r}}$ is the prime decomposition of $n$, then the Chinese Remainder Theorem asserts that the number of solutions $s \in \{1,\ldots,n-1\}$ of the previous equation is equal to $2^{r}\alpha_{a}$, where $\alpha_{0}=\alpha_{1}=1$, $\alpha_{2}=2$, and $\alpha_{a}=4$ for $a \geq 3$. Since $1$ and $n-1$ are two of those solutions, the number of those solutions in $\{2,\ldots,n-1\}$ is $2^{r}\alpha_{a}-2$.
\end{rema}

%%%%%%%%%%%%
\subsubsection{}
If $n \geq 3$ is odd, then part (2) of Theorem \ref{teo3} asserts the equality $\sigma^{0}_{hyp}(G^{0}_{s}(n)=\sigma^{pnf}_{hyp}(G^{0}_{s}(n))$.

%%%%%%%%%%%%
\subsubsection{}
If $n \geq 8$ is even, then part (3) of Theorem \ref{teo3} asserts that the equality $\sigma^{0}_{hyp}(G^{0}_{s}(n)=\sigma^{pnf}_{hyp}(G^{0}_{s}(n))$ holds if and only if every maximal cyclic subgroup of $G^{0}_{s}(n)$ is conjugate to either $C_{1}=\langle x \rangle$, $C_{2}=\langle y \rangle$ or $C_{3}=\langle xy\rangle$.
In case the above situation does not hold, we may use Theorem \ref{teo0} to obtain the value of $\sigma^{pnf}_{hyp}(G^{0}_{s}(n))$ (see the algorithm in Section \ref{Sec:GAP}).
 In Table \ref{tablita1} are shown some cases (the computations have been carried out using GAP \cite{GAP}).
Below, we work out the first case $n=8$.

%%%%%%%%%%%%%%%%
{\tiny
\begin{table}[h]
$$
\begin{array}{|c|c|c|c|c|}\hline n & s & gpnf/non-gpnf & \sigma^{0}_{hyp}(G^{0}_{s}(n)) & \sigma^{0}_{hyp}(G^{0}_{s}(n))\\\hline\hline 
8 & 3 & gpnf & 2& 2\\\hline 
8 & 5 & non-gpnf & 3 &13  \\\hline 
10 & \emptyset & {} & & \\\hline
12 & 5 & non-gpnf & 3 &15  \\\hline 
12 & 7 & non-gpnf & 4 & 8 \\\hline 
14 & \emptyset & {} & & \\\hline
16 & 7 & gpnf & 4&4 \\\hline 
16 & 9 & non-gpnf & 7 & 33 \\\hline
18 & \emptyset & & & \\\hline
20 & 9 & non-gpnf & 5 & 25 \\\hline 
20 & 11 & non-gpnf & 8 & 16 \\\hline 
22 & \emptyset & {} & & \\\hline
24 & 5 & non-gpnf & 9 & 39 \\\hline 
24 & 7 & non-gpnf & 8 & 16 \\\hline 
24 & 11 & gpnf & 6 & 6 \\\hline 
24 & 13 & non-gpnf & 11 & 53 \\\hline 
24 & 17 & non-gpnf & 9 & 39  \\\hline 
24 & 19 & non-gpnf & 10 &  18\\\hline
\end{array}
\begin{array}{|c|c|c|c|c|}\hline n & s & gpnf/non-gpnf &\sigma^{0}_{hyp}(G^{0}_{s}(n))  & \sigma^{0}_{hyp}(G^{0}_{s}(n))    \\\hline\hline 
26 & \emptyset &  & & \\\hline 
28 & 13 & non-gpnf & 7 & 35 \\\hline 
28 & 15 & non-gpnf & 12 & 24 \\\hline 
30 & 11 & non-gpnf & 12 & 24 \\\hline 
30 & 19 & non-gpnf & 10 & 20 \\\hline 
32 & 15 & gpnf & 8 & 8 \\\hline 
32 & 17 & non-gpnf & 15 & 113 \\\hline 
34 & \emptyset &  & & \\\hline 
36 & 17 & non-gpnf & 9 & 45\\\hline 
36 & 19 & non-gpnf & 16 & 32\\\hline 
38 & \emptyset &  &  & \\\hline 
40 & 9 & non-gpnf &15 & 65 \\\hline 
40 & 11 & non-gpnf & 18 & 34 \\\hline 
40 & 19 & gpnf & 10 &10 \\\hline 
40 & 21 & non-gpnf & 19 & 93 \\\hline 
40 & 29 & non-gpnf & 15 & 65 \\\hline 
40 & 31 & non-gpnf & 16 & 32 \\\hline 
 & & & &\\\hline
\end{array}
$$
\label{tablita1}
\caption{Cases: $8 \leq n \leq 40$ is even and $2 \leq s \leq n-2$ is such that $s^{2} \equiv 1 \mod(n)$.}
\end{table}
}

%%%%%%%%%%%%
\subsubsection{Example: $n=8$}
 In this case, $s \in \{3,5\}$. 
\subsubsection*{\rm (1)}
If $s=3$, then (up to conjugation) the maximal cyclic subgroups of $G^{0}_{3}(8)$ are $C_{1}$, $C_{2}$, and $C_{3}$, so $\sigma^{pnf}_{hyp}(G^{0}_{3}(8))=\sigma^{0}_{hyp}(G^{0}_{3}(8))=2$, and quotient signature $(0;2,4,8)$.

\subsubsection*{\rm (2)} If $s=5$, then the  maximal cyclic subgroups (up to conjugation) are 
$C_{1}$, $C_{2}$, $C_{3}$, and $C_{4}=\langle x^{2}y\rangle$, so 
$\sigma^{pnf}_{hyp}(G^{0}_{5}(8))>\sigma^{0}_{hyp}(G^{0}_{5}(8))=3$.
To obtain a gpng action by $G^{0}_{5}(8)$, the quotient signature must be of the form $(\gamma;2,4,8,8,k_{1},\ldots,k_{r})$ (by Propositon \ref{propo1}).
Let us consider a Fuchsian group of signature $(0;2,4,8,8)$ (which is of the minimal hyperbolic area among all the Fuchsian groups with the above types of signatures)
$\Delta_{0}=\langle x_{1},x_{2},x_{3},x_{4},x_{5}: x_{1}^{8}=x_{2}^{2}=x_{3}^{2}=x_{4}^{8}=x_{5}^{4}=x_{1}x_{2}x_{3}=1\rangle,$
and the surjective homomorphism with torsion-free kernel
$\theta_{0}:\Delta_{0} \to G^{0}_{5}(8),$ where
$\theta_{0}(x_{1})=x, \; \theta_{0}(x_{2})=y, \; \theta_{0}(x_{3})=y, \; \theta_{0}(x_{4})=xy, \; \theta_{0}(x_{5})=x^{6}y.$
The conformal action of $G^{0}_{5}(8)$ induced by the homomorphisms $\theta_{0}$ is gpnf on a genus $g=13$ Riemann surface. So, in this case, 
$\sigma^{pnf}_{hyp}(G^{0}_{5}(8))=13$, and with quotient signature $(0;2, 2, 4, 8, 8)$.

%%%%%%%%%%%%%%%%%%
%%%%%%%%%%%%%%%%%
\bigskip

%\noindent{\bf Declarations.}
%\medskip

\noindent{\bf Data Availability Statement.} We have used GAP (Groups, Algorithms, and Programming \cite{GAP}) to implement the algorithm to compute $\sigma^{pnf}_{hyp}(G)$.

\medskip

\noindent{\bf Conflicts of interest.} The authors have no conflicts of interest to declare that are relevant to this article

\medskip

\noindent{\bf Declaration of generative AI and AI-assisted technologies in the manuscript preparation process}
During the preparation of this work, the authors used Grammarly to improve the English language. The authors reviewed and edited
the content as needed and take full responsibility for the content of the published article.

%%%%%%%%%%%%%%%%%%
%%%%%%%%%%%%%%%%%%%

\end{document}